\newtheorem{thm}{Theorem}[section]
\newtheorem{lmm}[thm]{Lemma}
\newtheorem{prp}[thm]{Proposition}
\theoremstyle{definition}
\theoremstyle{remark}
\newtheorem*{ack}{Acknowledgement}
\newcommand{\FonP}{$(\mathrm{P}_{m})$}
\newcommand{\Order}{\mathcal{O}}
\newcommand{\maxid}{\mathfrak{p}}
\newcommand{\into}{\hookrightarrow}
\newcommand{\onto}{\twoheadrightarrow}
\newcommand{\isomto}{\overset{\sim}{\to}}
\newcommand{\compose}{\circ}
\newcommand{\tensor}{\otimes}
\newcommand{\ctensor}{\mathbin{\hat{\tensor}}}
\newcommand{\algcl}[1]{\overline{#1}}
\newcommand{\Z}{\mathbb{Z}}
\newcommand{\Q}{\mathbb{Q}}
\newcommand{\F}{\mathbb{F}}
\newcommand{\ab}{\mathrm{ab}}
\newcommand{\sep}{\mathrm{sep}}
\newcommand{\ur}{\mathrm{ur}}
\newcommand{\id}{\mathrm{id}}
\newcommand{\alg}[1]{\mathbf{#1}}
\newcommand{\Sch}[1]{\mathrm{Sch} / #1}
\newcommand{\Perf}[1]{\mathrm{Perf} / #1}
\newcommand{\fpqc}[1]{(\mathrm{Sch} / #1)_{\mathrm{fpqc}}}
\newcommand{\pfpqc}[1]{(\mathrm{Perf} / #1)_{\mathrm{fpqc}}}
\newcommand{\sh}[1]{\mathrm{Ab}(\mathrm{Sch} / #1)_{\mathrm{fpqc}}}
\newcommand{\psh}[1]{\mathrm{Ab}(\mathrm{Perf} / #1)_{\mathrm{fpqc}}}
\newcommand{\Ga}{\mathbf{G}_{a}}
\newcommand{\Gm}{\mathbf{G}_{m}}
\DeclareMathOperator{\Gal}{Gal}
\DeclareMathOperator{\Hom}{Hom}
\DeclareMathOperator{\Ker}{Ker}
\let\Im\relax
\DeclareMathOperator{\Im}{Im}
\DeclareMathOperator{\Coker}{Coker}
\DeclareMathOperator{\Ext}{Ext}
\DeclareMathOperator{\Res}{Res}
\DeclareMathOperator{\Spec}{Spec}
\DeclareMathOperator{\Grn}{\mathbf{Grn}}
\DeclareMathOperator{\Cores}{Cores}
\title{A refinement of the local class field theory
of Serre and Hazewinkel}
\author{\textsc{Takashi Suzuki}\footnote{Department of Mathematics, 
University of Chicago,
5734 S University Ave,
Chicago, IL 60637
\newline e-mail: \texttt{suzuki@math.uchicago.edu}} and 
\textsc{Manabu Yoshida}\footnote{Graduate School of Mathematics, 
Kyushu University, Fukuoka
819-0395, Japan.\newline e-mail: \texttt{m-yoshida@kyushu-u.ac.jp}}
}
\keywords{\textit{Local class field theory}.}         %optional
\begin{document}
%
% The text goes here.  
% Be sure to use the appropriate "theorem-like" environment as 
% is the following examples.  Never use plain TeX commands for these, as
% they will cause interference with the styles of other papers. 

\maketitle

\begin{abstract}      %optional
	We give a refinement of the local class field theory of Serre and Hazewinkel.
	This refinement allows the theory to treat extensions that are not necessarily totally ramified.
	Such a refinement was obtained and used in the authors' paper on Fontaine's property (P$_{m}$),
	where the explanation had to be rather brief.
	In this paper, we give a complete account,
	from necessary knowledge of an appropriate Grothendieck site
	to the details of the proof.
	We start by reviewing the local class field theory of Serre and Hazewinkel.
\end{abstract}

\tableofcontents      %optional

%%%%%%%%%%%%%%%%%%%%%%%%%%%%%%%%%%%%%%%%%%%%%%%%%%%%%%%%%%%%%%%%%%%%%%%%%%%%%%%%%%%%%%%%%%%%%

\section{Introduction}

Let $K$ be a complete discrete valuation field
with perfect residue field $k$ of characteristic $p > 0$ and
let $K^{\mathrm{ab}}$ be the maximal abelian extension of $K$.
When $k$ is finite, the usual local class field theory gives a canonical homomorphism
	\[
		K^{\times} \to \Gal(K^{\ab} / K),
	\]
which induces a commutative diagram
	\[
		\begin{CD}
				0
			@>>>
				U_{K}
			@>>>
				K^{\times}
			@>>>
				\Z
			@>>>
				0
			\\
			@. @V \wr VV @VVV @VVV @.
			\\
				0
			@>>>
				T(K^{\ab} / K)
			@>>>
				\Gal(K^{\ab} / K)
			@>>>
				\Gal(k^{\ab} / k)
			@>>>
				0,
		\end{CD}
	\]
where $U_{K}$ is the group of units of $K$ and
$T$ denotes the inertia group.
Serre (\cite{Ser61}) gave an analogue of this theory
for the case where the residue field $k$ is algebraically closed.
For this, he developed the theory of proalgebraic groups
(more precisely, pro-quasi-algebraic groups)
and their fundamental groups in his paper \cite{Ser60}.
There the group of units $U_{K}$ was viewed as a proalgebraic group over the residue field $k$.
We denote this proalgebraic group by $\alg{U}_{K}$
and its fundamental group by $\pi_{1}^{k}(\alg{U}_{K})$.
He proved the existence of a canonical isomorphism
	\[
			\pi_{1}^{k}(\alg{U}_{K})
		\isomto
			\Gal(K^{\ab} / K).
	\]
This is the local class field theory of Serre.
Later Hazewinkel generalized this theory to the case
where the residue field $k$ is a perfect field.
He defined the proalgebraic group of units $\alg{U}_{K}$ over $k$
and its fundamental group $\pi_{1}^{k}(\alg{U}_{K})$ in a similar way
in \cite[Appendice]{DG70},
and proved the existence of a canonical isomorphism
	\[
			\pi_{1}^{k}(\alg{U}_{K})
		\isomto
			T(K^{\ab} / K).
	\]
This is the local class field theory of Hazewinkel.

In this paper, we extend the local class field theory of Serre and Hazewinkel
so as to describe the whole group $\Gal(K^{\ab} / K)$
in the case where the residue field $k$ is a general perfect field.
For this, we view the multiplicative group $K^{\times}$ of $K$ as a group scheme
(more precisely, a perfect group scheme, on which the Frobenius is an isomorphism),
denoted by $\alg{K}^{\times}$,
which is isomorphic to the direct product of $\alg{U}_{K}$ and
the discrete group scheme $\mathbb{Z}$ over $k$.
We will define its fundamental group $\pi_{1}^{k}(\alg{K}^{\times})$ in Section \ref{Sec:Fpqc}
using the Ext functor for the category of sheaves on a version of the fpqc site of $k$.
Our main result is the following.

\begin{thm} \label{Th:Main}
	For a complete discrete valuation field $K$ with perfect residue field $k$,
	there exists a canonical isomorphism
		\[
				\pi_{1}^{k}(\alg{K}^{\times})
			\isomto
				\Gal(K^{\ab} / K)
		\]
	with a commutative diagram
		\begin{equation} \label{Eq:LCFT:Compati}
			\begin{CD}
					0
				@>>>
					\pi_{1}^{k}(\alg{U}_{K})
				@>>>
					\pi_{1}^{k}(\alg{K}^{\times})
				@>>>
					\pi_{1}^{k}(\Z)
				@>>>
					0
				\\
				@. @VV \wr V @VV \wr V @VV \wr V @.
				\\
					0
				@>>>
					T(K^{\ab} / K)
				@>>>
					\Gal(K^{\ab} / K)
				@>>>
					\Gal(k^{\ab} / k)
				@>>>
					0,
			\end{CD}
		\end{equation}
	where the left vertical isomorphism is the one given by the local class field theory of Hazewinkel
	and the right vertical isomorphism is the natural one
	(see the end of Section \ref{Sec:Thick}) times $-1$.
\end{thm}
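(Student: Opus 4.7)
My plan is to deduce the theorem from the isomorphisms on the outer columns of \eqref{Eq:LCFT:Compati} via the five lemma. The left vertical isomorphism is Hazewinkel's theorem and the right vertical one is specified at the end of Section \ref{Sec:Thick} (up to the sign $-1$), so the substantive work consists in (i) constructing the middle arrow $\pi_{1}^{k}(\alg{K}^{\times}) \to \Gal(K^{\ab}/K)$ and (ii) verifying commutativity of the two squares.

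For (i), I would use the definition of $\pi_{1}^{k}$ via the Ext functor on the fpqc site of $k$ introduced in Section \ref{Sec:Fpqc}: giving a continuous homomorphism $\pi_{1}^{k}(\alg{K}^{\times}) \to \Gal(K^{\ab}/K) = \varprojlim_{L/K} G_{L/K}$ is the same as giving a compatible system of extension classes in $\Ext^{1}(\alg{K}^{\times}, G_{L/K})$ as $L$ runs over the finite abelian extensions of $K$, with $G_{L/K} := \Gal(L/K)$. The natural source of these classes is classical reciprocity $\alg{K}^{\times}/N_{L/K}\alg{L}^{\times} \cong G_{L/K}$, upgraded to perfect group schemes: the norm $N_{L/K} \colon \alg{L}^{\times} \to \alg{K}^{\times}$ is a morphism of perfect group schemes whose fpqc cokernel is the constant sheaf $G_{L/K}$. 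Applying a Yoneda/cone manipulation to the two-term complex $\alg{L}^{\times} \to \alg{K}^{\times}$ (with $G_{L/K}$ acting on the source) produces the required extension
\[
0 \to G_{L/K} \to \alg{E}_{L/K} \to \alg{K}^{\times} \to 0,
\]
and compatibility in $L$ follows from transitivity of norms in towers.

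For (ii), restricting $\alg{E}_{L/K}$ to $\alg{U}_{K}$ should recover Hazewinkel's extension attached to the inertia subgroup of $G_{L/K}$, yielding commutativity of the left square by inspection. Pushing $\alg{E}_{L/K}$ forward along $\alg{K}^{\times} \onto \Z$ collapses it to an extension of $\Z$ by the unramified quotient $G_{L/K}^{\ur}$, corresponding to multiplication by the residue degree $f$; this matches $-1$ times the natural identification $\pi_{1}^{k}(\Z) \isomto \Gal(k^{\ab}/k)$ fixed at the end of Section \ref{Sec:Thick}. Given both squares, the five lemma yields that the middle arrow is an isomorphism.

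The main obstacle I expect is foundational: producing $\alg{E}_{L/K}$ cleanly in the fpqc sheaf category of Section \ref{Sec:Fpqc} and then identifying its restriction to $\alg{U}_{K}$ with Hazewinkel's original class, which is constructed in the category of pro-quasi-algebraic groups rather than fpqc sheaves, requires a careful comparison of the two frameworks. A secondary but nontrivial point is the sign bookkeeping on the right square: one must trace through the Frobenius convention (geometric versus arithmetic) underlying the identification $\pi_{1}^{k}(\Z) \cong \Gal(k^{\ab}/k)$ to see the $-1$ emerge naturally rather than by fiat.
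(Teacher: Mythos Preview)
Your overall architecture---produce an extension of $\alg{K}^{\times}$ by $G^{\ab}$ for each finite Galois $L/K$, pass to the limit, then invoke the five lemma---matches the paper's. The gap is in step~(i): your claimed source for the extension is wrong. The fpqc cokernel of the norm $N_{L/K} \colon \alg{L}_{k}^{\times} \to \alg{K}^{\times}$ is \emph{not} the constant sheaf $G_{L/K}$; it is zero. On $\algcl{k}$-points this map is the norm $(\hat{K}^{\ur} \tensor_{K} L)^{\times} \to (\hat{K}^{\ur})^{\times}$, which is surjective (this is part of the vanishing $\hat{H}^{0}$ over an algebraically closed residue field used already in the proof of Theorem~\ref{Th:SerreLCFT}). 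The classical isomorphism $K^{\times}/N_{L/K}L^{\times} \cong G_{L/K}$ is a statement about $k$-rational points when $k$ is finite and does not sheafify; no ``Yoneda/cone manipulation'' on the two-term complex $\alg{L}_{k}^{\times} \to \alg{K}^{\times}$ will produce $G_{L/K}$ as a cokernel.

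The paper's construction runs in the opposite direction: one passes to $G$-coinvariants, dividing $\alg{L}_{k}^{\times}$ by $I_{G} \alg{U}_{L,\, k'/k}$, and identifies the \emph{kernel} of the resulting surjection onto $\alg{K}^{\times}$ with $G^{\ab}$. That identification is the boundary isomorphism $\hat{H}^{-2}(G, \Z) \isomto \hat{H}^{-1}(G, \alg{U}_{L,\, k'/k})$, and it rests on the vanishing of the Tate cohomology \emph{sheaves} $\hat{H}^{i}(G, \alg{L}_{k}^{\times})$ (Proposition~\ref{Prop:Vanish}). Establishing this vanishing is the substantive input you are missing: it requires the machinery of Section~\ref{Sec:Tate} to promote the known vanishing of Tate cohomology groups over $\algcl{k}$ to a statement about sheaves on $\pfpqc{k}$. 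Once the correct extension is in hand, the comparison with Hazewinkel and the sign on the right square drop out of the explicit $3 \times 3$ diagram~\eqref{Eq:NormDiagram}; but neither of your squares can be checked against an extension that has not actually been constructed.
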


In the case the residue field $k$ is the finite field $\F_{q}$ with $q$ elements,
we have a natural homomorphism
$K^{\times} \to \pi_{1}^{k}(\alg{K}^{\times})$ such that
the composite map
	$
			K^{\times}
		\to
			\pi_{1}^{k}(\alg{K}^{\times})
		\isomto
			\Gal(K^{\ab} / K)
	$
coincides with the canonical map of the usual local class field theory times $-1$,
which sends a prime element to an automorphism that acts on $k^{\ab} = \algcl{\F_{q}}$
by the $q^{-1}$-th power map
(see the paragraph after Proposition \ref{Prop:BC}).

Actually Theorem \ref{Th:Main} was previously formulated and proved in the authors' paper
on Fontaine's property \FonP\ (\cite[Prop.\ 4.1]{SY10}).
The explanation in that paper, however, had to be rather brief,
since the details of this theorem are too complicated,
so that detailed explanation could destroy the organization of that paper.
Giving precise formulation of Theorem \ref{Th:Main} and proving it
is the subject of this paper.

The organization of this paper is as follows.
In Section \ref{Sec:SH}, we give a review of the local class field theory of Serre and Hazewinkel.
In formulating and proving the above refinement of this theory,
several difficulties naturally appear.
The beginning of Section \ref{Sec:Fpqc} is devoted to
an explanation about these problems and to an outline of the way we take to solve them.
In Section \ref{Sec:Fpqc}, we define a version of the fpqc site of a perfect field $k$
and develop a general theory on it as preparation for the next section.
Section \ref{Sec:FormPf} is the local class field theory
for a complete discrete valuation field $K$ with perfect residue field $k$.
We construct some sheaves associated with $K$ and its finite extensions,
and prove Theorem \ref{Th:Main} as well as some auxiliary results that are needed in \cite[\S 4]{SY10}.
Detailed explanation about the organization of Sections \ref{Sec:Fpqc} and \ref{Sec:FormPf}
is given at the beginning of Section \ref{Sec:Fpqc}.

\begin{ack}
	The authors would like to thank Professor Kato and Professor Taguchi
	for having helpful discussions,
	and to Professor Fesenko for suggesting relation between his work \cite{Fes93} and our work.
	They also thank to the referee and Professor Suwa
	for reading a draft of the paper and giving comments for it.
\end{ack}

%%%%%%%%%%%%%%%%%%%%%%%%%%%%%%%%%%%%%%%%%%%%%%%%%%%%%%%%%%%%%%%%%%%%%%%%%%%%%%%%%%%%%%%%%%%%%

\section{Review of the local class field theory of Serre and Hazewinkel}
\label{Sec:SH}

In this section, we review the local class field theory of Serre and Hazewinkel.
We first discuss the part that is due to Serre.
Let $k$ be an algebraically closed field of characteristic $p > 0$.

First we recall quasi-algebraic groups and proalgebraic groups over $k$,
as well as their fundamental groups,
from Serre's paper \cite{Ser60}.
Roughly speaking, a quasi-algebraic group is the ``perfection'' of an algebraic group.
Let us make this precise.
For a commutative algebraic group $A$ over $k$ and a non-negative integer $i$,
let $A^{(i)}$ be the algebraic group $A$ over $k$
whose structure morphism is replaced by
the composite of the original structure morphism $A \to \Spec k$ and
the $p^{i}$-th power Frobenius morphism $\Spec k \isomto \Spec k$.
We denote by $A^{(\infty)}$ the projective limit of the sequence of algebraic groups
	\[
		\cdots \to A^{(2)} \to A^{(1)} \to A,
	\]
where the morphism $A^{(i + 1)} \to A^{(i)}$ is the Frobenius morphism.
We know that $A$ and $A^{(\infty)}$ are group schemes over $k$
having the same underlying topological space.
We also know that $A^{(\infty)}$ is perfect, namely the Frobenius gives an isomorphism on $A^{(\infty)}$.
A commutative quasi-algebraic group over $k$ (\cite[\S 1]{Ser60})
is a group scheme of the form $A^{(\infty)}$
for some commutative algebraic group $A$ over $k$.
The category of commutative quasi-algebraic groups is an artinian abelian category
(\cite[\S 1, Prop.\ 5-6]{Ser60}).
Its procategory is the category of commutative proalgebraic groups defined by Serre (\cite[\S 2]{Ser60}).
This is an abelian category with enough projectives
(\cite[\S 2, Prop.\ 7 and \S 3, Prop.\ 1]{Ser60}).
The exactness of a sequence $A \to B \to C$ in the category of commutative proalgebraic groups is
equivalent to the exactness of the sequence $A(k) \to B(k) \to C(k)$ induced on the groups of $k$-rational points
(\cite[\S 1, Prop.\ 4-5]{Ser60}).
Define a functor $\pi_{0}^{k}$ from the category of commutative proalgebraic groups
to the category of profinite abelian groups
by taking the group of connected components
($=$ the maximal profinite quotient) (\cite[\S 5.1]{Ser60}).
This functor is right exact (\cite[\S 5, Prop.\ 2]{Ser60}).
For $i \ge 0$, the $i$-th left derived functor of $\pi_{0}^{k}$
is called the $i$-th homotopy group functor (\cite[\S 5, Def.\ 1]{Ser60}),
which we denote by $\pi_{i}^{k}$.
The functor $\pi_{1}^{k}$ is called the fundamental group functor.
Let $\Ext_{k}^{i}$ be the $i$-th Ext functor for the category of commutative proalgebraic groups.
Since
	$
			\injlim_{n \ge 1} \Hom_{k}(A, n^{-1} \Z / \Z)
		\cong
			\Hom(\pi_{0}^{k}(A), \Q / \Z)
	$
for any proalgebraic group $A$,
we have
	$
			\injlim_{n \ge 1} \Ext_{k}^{i}(A, n^{-1} \Z / \Z)
		\cong
			\Hom(\pi_{i}^{k}(A), \Q / \Z)
	$
for any $i \ge 0$.
When $i = 1$, this means that $\pi_{1}^{k}(A)$ classifies
surjective isogenies to $A$ with (pro-)finite constant kernels.

Let $K$ be a complete discrete valuation field with residue field $k$.
Then, as explained in Section 1 of Serre's paper \cite{Ser61},
the group of units $U_{K}$ of $K$ can be viewed as a proalgebraic group over $k$.
We denote this proalgebraic group by $\alg{U}_{K}$.
This group is affine (or equivalently, (pro-)linear) and connected (\cite[\S 1.3]{Ser61}).
The group of $k$-rational points of $\alg{U}_{K}$ is given by the abstract group $U_{K}$:
$\alg{U}_{K}(k) = U_{K}$.
Also for each $n \ge 0$,
the group of $n$-th principal units $U_{K}^{n}$ can be viewed as a proalgebraic group,
denoted by $\alg{U}_{K}^{n}$.
This is a proalgebraic subgroup of $\alg{U}_{K}$,
the quotient $\alg{U}_{K} / \alg{U}_{K}^{n}$ being an $n$-dimensional quasi-algebraic group.
We have $\alg{U}_{K} \isomto \projlim_{n \ge 0} \alg{U}_{K} / \alg{U}_{K}^{n}$,
which gives the proalgebraic structure for $\alg{U}_{K}$.
Also we have $\alg{U}_{K} / \alg{U}_{K}^{1} \cong \Gm^{(\infty)}$,
where $\Gm^{(\infty)}$ is the quasi-algebraic group
associated with the algebraic group $\Gm$ of invertible elements.
The Teichm\"{u}ller section $\Gm^{(\infty)} \into \alg{U}_{K}$ defines a splitting
$\alg{U}_{K} \cong \Gm^{(\infty)} \times \alg{U}_{K}^{1}$.
The main theorem of the local class field theory of Serre is the following.

\begin{thm}[\cite{Ser61}] \label{Th:SerreLCFT}
	Assume that $k$ is algebraically closed as above.
	Then there exists a canonical isomorphism
		\[
			\pi_{1}^{k}(\alg{U}_{K}) \isomto \Gal(K^{\ab} / K).
		\]
\end{thm}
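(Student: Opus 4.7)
The plan is to prove the isomorphism by comparing finite-order character groups on both sides. Using the identification recalled in the excerpt,
$$\Hom(\pi_{1}^{k}(\alg{U}_{K}), \Q / \Z) \cong \injlim_{n \ge 1} \Ext_{k}^{1}(\alg{U}_{K}, n^{-1} \Z / \Z),$$
together with the tautology $\Hom(\Gal(K^{\ab}/K), \Q / \Z) = \injlim_{n \ge 1} \Hom(\Gal(K^{\ab}/K), n^{-1} \Z / \Z)$, the task reduces to producing, for each $n$, a canonical isomorphism
$$\Ext_{k}^{1}(\alg{U}_{K}, \Z / n \Z) \isomto \Hom(\Gal(K^{\ab}/K), \Z / n \Z)$$
that is compatible as $n$ varies. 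Concretely, for every finite cyclic character $\chi$ cut out by a cyclic extension $L/K$, I would exhibit a surjective isogeny of proalgebraic groups over $k$ with constant cyclic kernel of the appropriate order mapping onto $\alg{U}_{K}$, and show that every such isogeny arises uniquely in this way.

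For the construction, since $k$ is algebraically closed every finite abelian extension $L/K$ is totally ramified, and the norm $N_{L/K} \colon \alg{U}_{L} \to \alg{U}_{K}$ is a morphism of proalgebraic groups over $k$. Let $I_{L/K}$ be the proalgebraic subgroup of $\alg{U}_{L}$ generated by elements of the form $u^{\sigma - 1}$ with $u \in \alg{U}_{L}$ and $\sigma \in \Gal(L/K)$. Hazewinkel's reciprocity computation, recast in the proalgebraic setting, produces a short exact sequence
$$0 \to \Gal(L/K) \to \alg{U}_{L} / I_{L/K} \to \alg{U}_{K} \to 0,$$
which defines an element of $\Ext_{k}^{1}(\alg{U}_{K}, \Gal(L/K))$ and hence a character of $\pi_{1}^{k}(\alg{U}_{K})$ with values in $\Gal(L/K)$. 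Functoriality in $L$ assembles these into a homomorphism $\Gal(K^{\ab}/K) \to \pi_{1}^{k}(\alg{U}_{K})$ whose Pontryagin dual realises the desired comparison of character groups.

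To prove bijectivity I would split into the tame (prime-to-$p$) and wild ($p$-power) parts. On the tame side, the Teichm\"uller splitting $\alg{U}_{K} \cong \Gm^{(\infty)} \times \alg{U}_{K}^{1}$ together with the fact that $\alg{U}_{K}^{1}$ is uniquely $n$-divisible for $n$ prime to $p$ reduces the computation of $\Ext_{k}^{1}(\alg{U}_{K}, \Z/n\Z)$ to a Kummer-theoretic computation on $\Gm^{(\infty)}$, which is matched against the tame part of $\Hom(\Gal(K^{\ab}/K), \Z/n\Z)$ via the explicit description of tamely ramified characters in terms of units. On the wild side, Artin--Schreier--Witt theory identifies the wild part of $\Hom(\Gal(K^{\ab}/K), \Z/p^{m}\Z)$ with a quotient of Witt vectors $W_{m}(K)/(F-1) W_{m}(K)$, and one computes the corresponding Ext groups by analysing the proalgebraic filtration $\{\alg{U}_{K}^{n}\}$ via Witt-vector manipulations.

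The main obstacle is the wild part: ensuring that the character of $\pi_{1}^{k}(\alg{U}_{K})$ produced from the isogeny $\alg{U}_{L} / I_{L/K} \to \alg{U}_{K}$ matches the Artin--Schreier--Witt class of $L/K$ at the correct level of the filtration. This amounts to a compatibility between the upper-numbering ramification filtration on $\Gal(K^{\ab}/K)$ and the filtration $\{\alg{U}_{K}^{n}\}$ under the prospective isomorphism, and in particular invokes the Hasse--Arf theorem to guarantee integral breaks. Once this level-by-level match is established for every cyclic $p$-power extension, combining with the tame case yields the canonical isomorphism $\pi_{1}^{k}(\alg{U}_{K}) \isomto \Gal(K^{\ab}/K)$ of the theorem.
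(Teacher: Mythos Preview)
Your construction of the map via the short exact sequence
\[
0 \to G^{\ab} \to \alg{U}_{L}/I_{G}\alg{U}_{L} \overset{N_{L/K}}{\longrightarrow} \alg{U}_{K} \to 0
\]
is exactly what the paper's sketch does (and what Serre does in \cite{Ser61}). One slip: the homotopy boundary goes $\pi_{1}^{k}(\alg{U}_{K}) \to G^{\ab}$, so passing to the limit over $L$ yields a map $\pi_{1}^{k}(\alg{U}_{K}) \to \Gal(K^{\ab}/K)$, not the reverse direction you wrote. The paper then simply \emph{cites} the existence theorem \cite[\S 4, Th.\ 1]{Ser61} for injectivity rather than proving it, so from here on you are attempting to supply what the paper omits.

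That attempt has two genuine gaps. First, Artin--Schreier--Witt describes $\Hom(G_{K},\Z/p^{m}\Z)$ as $W_{m}(K)/(F-1)W_{m}(K)$ only when $\mathrm{char}\,K = p$; in mixed characteristic (e.g.\ $K$ a finite extension of $\mathrm{Frac}\,W(k)$) no such description is available and the $p$-part requires an entirely different argument, which your plan does not address. Second, the sentence ``one computes the corresponding Ext groups by analysing the proalgebraic filtration $\{\alg{U}_{K}^{n}\}$'' conceals the whole difficulty: each graded piece $\alg{U}_{K}^{n}/\alg{U}_{K}^{n+1}\cong\Ga^{(\infty)}$ has $\Ext_{k}^{1}(\Ga^{(\infty)},\Z/p)$ infinite over $\F_{p}$, so a naive d\'evissage gives no finiteness and no direct comparison with $W_{m}(K)/(F-1)$. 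The ``level-by-level match'' you correctly flag as the main obstacle is not a lemma one checks along the way---it \emph{is} Serre's existence theorem, whose proof (via the induced filtration on $\pi_{1}^{k}(\alg{U}_{K})$, Hasse--Arf, and in equal characteristic the explicit Schmid--Witt reciprocity) occupies most of \cite[\S\S 3--5]{Ser61}. Asserting that ``once this is established'' the theorem follows is circular.
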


\begin{proof}[Sketch of Proof]
	The construction of the isomorphism is as follows (\cite[\S 2]{Ser61}).
	For a finite Galois extension $L / K$ with Galois group $G$,
	let $\alg{U}_{L}$ be the proalgebraic group of units of $L$ over $k$.%
		\footnote{Note that the residue field of $L$ is $k$ since $k$ is algebraically closed,
		so that we can define $\alg{U}_{L}$ over $k$ in the same way we defined $\alg{U}_{K}$.}
	The norm map for $L / K$ induces a homomorphism of proalgebraic groups
	$N_{L / K} \colon \alg{U}_{L} \to \alg{U}_{K}$
	that is surjective (\cite[\S 2, Cor.\ to Prop.\ 1]{Ser61}).
	The Galois group $G$ acts on $\alg{U}_{L}$.
	Let $I_{G}$ be the augmentation ideal of the group ring $\Z[G]$
	and let $I_{G} \alg{U}_{L}$ be the product of the $G$-module $\alg{U}_{L}$ and the ideal $I_{G}$.

	We show that the sequence
		\begin{equation} \label{Eq:SerEx}
				0
			\to
				G^{\ab}
			\to
				\alg{U}_{L} / I_{G} \alg{U}_{L}
			\overset{N_{L / K}}{\to}
				\alg{U}_{K}
			\to
				0
		\end{equation}
	is an exact sequence of proalgebraic groups,
	where $G^{\ab}$ is the maximal abelian quotient of $G$
	viewed as a constant group over $k$ and
	the first map sends $\sigma \mapsto \sigma(\pi_{L}) / \pi_{L}$
	for a prime element $\pi_{L}$ of $L$.
	The group of $k$-rational points of the proalgebraic group $\Ker(N_{L / K}) / I_{G} \alg{U}_{L}$
	is the Tate cohomology group $\hat{H}^{-1}(G, U_{L})$
	(\cite[VIII, \S 1]{Ser79}).
	Consider the short exact sequence of $G$-modules
		\begin{equation} \label{Eq:ValExtGp}
				0
			\to
				U_{L}
			\to
				L^{\times}
			\to
				\Z
			\to
				0.
		\end{equation}
	The norm map $N_{L / K} \colon L^{\times} \to K^{\times}$ is surjective
	also as a morphism of abstract groups,
	so $\hat{H}^{0}(G, L^{\times}) = 0$.
	By Hilbert's theorem 90, we have $\hat{H}^{1}(G, L^{\times}) = 0$.
	Therefore we have
		\begin{equation} \label{Eq:Vanish}
				\hat{H}^{i}(G, L^{\times}) = 0
			\quad \text{for any} \quad
				i \in \Z
		\end{equation}
	by \cite[IX, Th.\ 8]{Ser79}.
	Hence we have
		$
				\hat{H}^{-1}(G, U_{L})
			\overset{\sim}{\leftarrow}
				\hat{H}^{-2}(G, \Z)
			\cong
				G^{\ab}
		$.
	This proves the exactness of the sequence \eqref{Eq:SerEx}.

	The homotopy long exact sequence induced by the short exact sequence \eqref{Eq:SerEx}
	gives an exact sequence
		\[
				\pi_{1}^{k}(\alg{U}_{K})
			\to
				\pi_{0}^{k}(G^{\ab})
			\to
				\pi_{0}^{k}(\alg{U}_{L} / I_{G} \alg{U}_{L}).
		\]
	We have $\pi_{0}^{k}(G^{\ab}) = G^{\ab}$.
	Since $\alg{U}_{L}$ is connected, so is $\alg{U}_{L} / I_{G} \alg{U}_{L}$,
	hence $\pi_{0}^{k}(\alg{U}_{L} / I_{G} \alg{U}_{L}) = 0$.
	Therefore we have a surjection $\pi_{1}^{k}(\alg{U}_{K}) \onto G^{\ab} = \Gal(L / K)^{\ab}$.
	Taking the limit in $L$, we have a surjection
	$\pi_{1}^{k}(\alg{U}_{K}) \onto \Gal(K^{\ab} / K)$.
	This is injective (``the existence theorem''; \cite[\S 4, Th.\ 1]{Ser61}).
	The isomorphism $\pi_{1}^{k}(\alg{U}_{K}) \isomto \Gal(K^{\ab} / K)$
	thus obtained is the one stated at the theorem.
\end{proof}

Now we review Hazewinkel's generalization of Serre's theory (\cite[Appendice]{DG70}).
Let $k$ be a perfect field.
Although he used the category of affine group schemes over $k$,
we instead use the categories of quasi-algebraic groups and proalgebraic groups over $k$
to make the discussion parallel to that of Serre.
Quasi-algebraic groups and proalgebraic groups over $k$ are defined
in a similar way as before.
The exactness of a sequence $A \to B \to C$ of commutative proalgebraic groups over $k$
is equivalent to the exactness of the sequence
$A(\algcl{k}) \to B(\algcl{k}) \to C(\algcl{k})$ induced on the groups of $\algcl{k}$-rational points.
The functor from the category of commutative proalgebraic groups over $k$
to the category of profinite abelian groups
taking the maximal proconstant quotient is denoted by $\pi_{0}^{k}$.
The left derived functors $\pi_{i}^{k}$ of $\pi_{0}^{k}$ are called the homotopy group functors
and $\pi_{1}^{k}$ is called the fundamental group functor.
The Pontryagin dual of $\pi_{i}^{k}(A)$ is given by
$\injlim_{n \ge 1} \Ext_{k}^{i}(A, n^{-1} \Z / \Z)$.
For a complete discrete valuation field $K$ with residue field $k$,
the proalgebraic group of units $\alg{U}_{K}$ over $k$ is defined similarly.
We have $\alg{U}_{K}(k) = U_{K}$.
Also we have $\alg{U}_{K}(\algcl{k}) = \hat{U}_{K}^{\ur} =$
the group of units of the completion of the maximal unramified extension $\hat{K}^{\ur}$ of $K$.
The main theorem of the local class field theory of Hazewinkel is the following.

\begin{thm}[{\cite[Appendice]{DG70}}] \label{Th:HazeLCFT}
	In the case $k$ is a general perfect field,
	there exists a canonical isomorphism
		\[
				\pi_{1}^{k}(\alg{U}_{K})
			\isomto
				T(K^{\ab} / K),
		\]
	where $T$ denotes the inertia group.%
		\footnote{Actually Hazewinkel used in \cite[V, \S 3, 4.2 and Appendice]{DG70}
		a slightly different functor $\gamma$
		to establish an isomorphism $\gamma(\alg{U}_{K}) \cong T(K^{\ab} / K)$.
		However, for a connected affine proalgebraic group $A$ (for example, $A = \alg{U}_{K}$),
		we have $\Hom(\gamma(A), N) \cong \Ext_{k}^{1}(A, N)$
		for any finite constant $N$ by \cite[V, \S 3, 4.2 Prop.]{DG70}
		and hence $\gamma(A) \cong \pi_{1}^{k}(A)$.
		Essentially the definition of $\gamma(\alg{U}_{K})$ is
		the $\Gal(\algcl{k} / k)$-coinvariants of $\pi_{1}^{\algcl{k}}(\alg{U}_{\hat{K}^{\ur}})$.
		Hence one step of the second proof of the theorem below
		can be interpreted as reproving $\gamma(\alg{U}_{K}) \cong \pi_{1}^{k}(\alg{U}_{K})$.}
\end{thm}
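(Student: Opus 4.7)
My plan is to reduce Hazewinkel's theorem to Serre's theorem (Theorem \ref{Th:SerreLCFT}) via passage to the completion $\hat K^{\ur}$ of the maximal unramified extension of $K$, followed by a descent from $\algcl{k}$ back to $k$. Since $\hat K^{\ur}$ has algebraically closed residue field $\algcl{k}$, every finite extension of $\hat K^{\ur}$ is totally ramified, which gives the canonical identification
\[
\Gal\bigl((\hat K^{\ur})^{\ab} / \hat K^{\ur}\bigr) \cong T(K^{\ab} / K).
\]
Applying Serre's theorem to $\hat K^{\ur}$ over $\algcl{k}$ therefore yields an isomorphism $\pi_{1}^{\algcl{k}}(\alg{U}_{\hat K^{\ur}}) \isomto T(K^{\ab}/K)$.

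The next step is to relate $\alg{U}_{\hat K^{\ur}}$ (over $\algcl{k}$) to $\alg{U}_{K}$ (over $k$). Since $O_{\hat K^{\ur}} / \maxid^{n} \cong (O_{K} / \maxid^{n}) \tensor_{k} \algcl{k}$ for every $n$, the induced map on proalgebraic unit groups is a base-change isomorphism $\alg{U}_{K} \tensor_{k} \algcl{k} \isomto \alg{U}_{\hat K^{\ur}}$. Thus one is reduced to establishing a descent formula
\[
\pi_{1}^{k}(\alg{U}_{K}) \cong \bigl(\pi_{1}^{\algcl{k}}(\alg{U}_{K} \tensor_{k} \algcl{k})\bigr)_{\Gal(\algcl{k}/k)}
\]
for the connected affine proalgebraic group $\alg{U}_{K}$. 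Following the hint recorded in the footnote of the theorem, this is precisely Hazewinkel's functor $\gamma(\alg{U}_{K})$: the identity $\Hom(\gamma(A), N) \cong \Ext_{k}^{1}(A, N)$ for finite constant $N$ together with the Pontryagin dual description of $\pi_{1}^{k}$ identifies $\gamma(A)$ with $\pi_{1}^{k}(A)$ for connected affine $A$, while by construction $\gamma(\alg{U}_{K})$ is essentially the $\Gal(\algcl{k}/k)$-coinvariants of $\pi_{1}^{\algcl{k}}(\alg{U}_{\hat K^{\ur}})$. Combining the three identifications produces the desired isomorphism $\pi_{1}^{k}(\alg{U}_{K}) \isomto T(K^{\ab}/K)$.

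The main obstacle will be the descent step: one must construct the natural comparison map $\pi_{1}^{\algcl{k}}(A_{\algcl{k}})_{\Gal} \to \pi_{1}^{k}(A)$ (arising from the fact that a finite constant cover of $A$ over $k$ base-changes to such a cover over $\algcl{k}$), show it is an isomorphism for connected affine $A$ by a cohomological argument, and then verify that the composite isomorphism agrees with Hazewinkel's original reciprocity-map construction based on norms. A more hands-on alternative, directly imitating Serre's argument at each finite Galois $L/K$ by applying the exact sequence \eqref{Eq:SerEx} to the totally ramified piece $L/L_{0}$ and taking Weil restrictions $\Res_{k_{L_{0}}/k}$ to bring everything over $k$, is possible in principle but requires considerable bookkeeping with augmentation ideals and inflation--restriction sequences for both $T(L/K)$ and $\Gal(L_{0}/K)$; the base-change route above is cleaner.
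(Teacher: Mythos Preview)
Your overall strategy---reduce to Serre's theorem over $\algcl{k}$ and descend via $\Gal(\algcl{k}/k)$-coinvariants---is exactly the paper's second proof. But there is a genuine error in your Galois-theoretic step. You assert that because every finite extension of $\hat K^{\ur}$ is totally ramified, one has $\Gal\bigl((\hat K^{\ur})^{\ab}/\hat K^{\ur}\bigr) \cong T(K^{\ab}/K)$. This is false. Writing $I = \Gal(K^{\sep}/K^{\ur})$ for the full inertia group, the left-hand side is $I^{\ab}$, whereas $T(K^{\ab}/K)$ is the \emph{image} of $I$ in $G_K^{\ab}$, namely $I/(I \cap [G_K,G_K])$. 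The latter is a proper quotient of the former in general (for $K = \Q_p$, $T(K^{\ab}/K) \cong \Z_p^{\times}$ is far smaller than $I^{\ab}$). Your argument as written would then give $\pi_1^k(\alg{U}_K) \cong (T(K^{\ab}/K))_{\Gal(\algcl{k}/k)}$, which happens to equal $T(K^{\ab}/K)$ since the conjugation action there is trivial---but only because the wrong identification upstairs accidentally absorbed the coinvariants you still needed to take.

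The correct version, as in the paper, takes coinvariants on \emph{both} sides of Serre's isomorphism $\pi_1^{\algcl{k}}(\alg{U}_{\hat K^{\ur}}) \isomto \Gal((\hat K^{\ur})^{\ab}/\hat K^{\ur})$, after first checking that this isomorphism is $\Gal(\algcl{k}/k)$-equivariant. On the Galois side, $(I^{\ab})_{\Gal(\algcl{k}/k)} \cong T(K^{\ab}/K)$ follows from the existence of a section of $G_K \onto \Gal(\algcl{k}/k)$. On the $\pi_1$ side, the paper argues directly via the spectral sequence $H^i\bigl(k, \Ext_{\algcl{k}}^j(\alg{U}_{\hat K^{\ur}}, \Q/\Z)\bigr) \Rightarrow \Ext_k^{i+j}(\alg{U}_K, \Q/\Z)$: the $j=0$ row vanishes by connectedness of $\alg{U}_{\hat K^{\ur}}$, so the edge map gives $\bigl(\pi_1^{\algcl{k}}(\alg{U}_{\hat K^{\ur}})\bigr)_{\Gal(\algcl{k}/k)} \cong \pi_1^k(\alg{U}_K)$. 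Your appeal to Hazewinkel's functor $\gamma$ for this step is an acceptable citation, but note that the footnote is pointing out that the spectral-sequence argument \emph{reproves} $\gamma \cong \pi_1^k$, so invoking $\gamma$ here is close to assuming what the proof establishes.
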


\begin{proof}[Sketch of Proof]
	For a finite totally ramified Galois extension $L / K$,
	the exact sequence \eqref{Eq:SerEx} is defined over $k$
	with $G^{\ab} = \Gal(L / K)^{\ab}$ viewed as a constant group
	(\cite[Appendice, \S 4.2]{DG70}).
	Thus we have a surjection $\pi_{1}^{k}(\alg{U}_{K}) \onto \Gal(L / K)^{\ab}$.
	For any infinite totally ramified Galois extension $L' / K$,
	we have a surjection $\pi_{1}^{k}(\alg{U}_{K}) \onto \Gal(L' / K)^{\ab}$
	by taking the limit over subfields $L \subset L'$ finite Galois over $K$.
	We can choose $L'$ so that $L' K^{\ur}$ is the separable closure of $K$
	(\cite[Appendice, \S 2.1]{DG70}).
	The composite map
		$
				\pi_{1}^{k}(\alg{U}_{K})
			\onto
				\Gal(L' / K)^{\ab}
			\onto
				T(K^{\ab} / K)
		$
	is independent of the choice of such $L'$ (\cite[Appendice, \S 6.2]{DG70}) and
	is an isomorphism (\cite[Appendice, \S 7.3]{DG70}).
	
	We give another proof of the theorem by reducing it to Serre's theorem \ref{Th:SerreLCFT}.
	If we apply this theorem for the completion $\hat{K}^{\ur}$ of the maximal unramified extension of $K$,
	we get an isomorphism
		\begin{equation} \label{Eq:IsomOverClosed}
				\pi_{1}^{\algcl{k}}(\alg{U}_{\hat{K}^{\ur}})
			\isomto
				\Gal((\hat{K}^{\ur})^{\ab} / \hat{K}^{\ur}).
		\end{equation}
	The proalgebraic group $\alg{U}_{\hat{K}^{\ur}}$
	can be obtained as the base extension of $\alg{U}_{K}$ from $k$ to $\algcl{k}$,
	so the absolute Galois group $\Gal(\algcl{k} / k)$ of $k$ acts on
	$\pi_{1}^{\algcl{k}}(\alg{U}_{\hat{K}^{\ur}})$.
	The group $\Gal(\algcl{k} / k)$ acts on $\Gal((\hat{K}^{\ur})^{\ab} / \hat{K}^{\ur})$ as well
	by lifting elements $\Gal(\algcl{k} / k)$ to $(\hat{K}^{\ur})^{\ab}$
	and then taking the conjugation action of them on $\Gal((\hat{K}^{\ur})^{\ab} / \hat{K}^{\ur})$.
	With these actions,
	the isomorphism \eqref{Eq:IsomOverClosed} is $\Gal(\algcl{k} / k)$-equivariant.
	We show that the $\Gal(\algcl{k} / k)$-coinvariants of
	the left-hand side (resp.\ the right-hand side) is
	$\pi_{1}^{k}(\alg{U}_{K})$ (resp.\ $T(K^{\ab} / K)$).
	The assertion for the right-hand side follows from the fact that
	the natural surjection $\Gal(K^{\sep} / K) \onto \Gal(\algcl{k} / k)$
	admits a section (\cite[\S 4.3, Exercises]{Ser02}).
	For the left-hand side,
	we use the natural spectral sequence
		$
				H^{i}(
					k,
					\Ext_{\algcl{k}}^{j}(
						\alg{U}_{\hat{K}^{\ur}}, \Q / \Z
					)
				)
			\Rightarrow
				\Ext_{k}^{i + j}(\alg{U}_{K}, \Q / \Z)
		$.
	We have
		$
				\Hom_{\algcl{k}}(\alg{U}_{\hat{K}^{\ur}}, \Q / \Z)
			=
				0
		$
	by the connectedness of $\alg{U}_{\hat{K}^{\ur}}$.
	Thus
		$
				H^{0}(
					k,
					\Ext_{\algcl{k}}^{1}(
						\alg{U}_{\hat{K}^{\ur}}, \Q / \Z
					)
				)
			=
				\Ext_{k}^{1}(\alg{U}_{K}, \Q / \Z)
		$.
	Hence the $\Gal(\algcl{k} / k)$-coinvariants of $\pi_{1}^{\algcl{k}}(\alg{U}_{\hat{K}^{\ur}})$
	is $\pi_{1}^{k}(\alg{U}_{K})$.
	Thus we have the required isomorphism
	by taking the $\Gal(\algcl{k} / k)$-coinvariants of the isomorphism \eqref{Eq:IsomOverClosed}.
\end{proof}

%%%%%%%%%%%%%%%%%%%%%%%%%%%%%%%%%%%%%%%%%%%%%%%%%%%%%%%%%%%%%%%%%%%%%%%%%%%%%%%%%%%%%%%%%%%%%

\section{The perfect fpqc site}
\label{Sec:Fpqc}

Now we want to formulate and prove Theorem \ref{Th:Main}.
Let us point out what we need for this.
As explained in Introduction,
we need to work in a category containing
both the proalgebraic group $\alg{U}_{K}$ over the perfect field $k$
and the discrete group scheme $\Z$,
to which the homotopy group functors $\pi_{i}^{k}$ should be extended.
We want to have an abelian category as such a category.
This causes a problem since the quotient of a proalgebraic group
by a discrete infinite group cannot be defined in an elementary way.
To deal with this, we first define a version of the fpqc site of $k$.
We denote it by $\pfpqc{k}$ and call it the perfect fpqc site of $k$ (Section \ref{Sec:FpqcMain}).
The underlying category of $\pfpqc{k}$ consists of
perfect $k$-schemes (perfect means that the Frobenius is invertible),
so that it contains quasi-algebraic groups.
Note that the perfect \'etale topology explained at
\cite[III, \S 0, ``Duality for unipotent perfect group schemes'']{Mil06}
is insufficient for our purpose,
since in general a surjection of proalgebraic groups is
not a surjection in the perfect \'etale topology
as it is not necessarily of finite presentation.
Also we have to be a bit careful about flatness,
since in general the relative Frobenius morphism of a $k$-scheme is not flat
and, unlike the perfect \'etale topology,
a scheme flat over a perfect $k$-scheme could be imperfect.
What we need for these points is Proposition \ref{Prop:Site} below.
The category $\psh{k}$ of sheaves of abelian groups on $\pfpqc{k}$
is the category we choose to work with.
Toward defining $\pi_{i}^{k}$ on $\psh{k}$,
the problem is that $\psh{k}$ no longer has sufficient projective objects,
so we cannot define $\pi_{i}^{k}$ to be the left derived functors of $\pi_{0}^{k}$.
Instead, we use the Ext functor for $\psh{k}$ to define $\pi_{i}^{k}$
(Section \ref{Sec:Homot}).
We prove in Section \ref{Sec:Thick} that $\psh{k}$ contains
both the category of commutative affine proalgebraic groups
and the category of commutative \'etale group schemes
both as abelian thick full subcategories.
The thickness implies that $\Ext_{k}^{1}$ and so $\pi_{1}^{k}$ are preserved.
To prove Theorem \ref{Th:Main},
we want to imitate Serre's proof of Theorem \ref{Th:SerreLCFT}.
This gives rise to two problems.
One problem is that the exactness of a sequence in $\psh{k}$
is not always determined by the exactness of the sequence induced on the groups of $\algcl{k}$-points.
We deal with this by defining Tate cohomology not as groups but as sheaves (Section \ref{Sec:Tate})
and giving one situation where $\algcl{k}$-points have enough information (Proposition \ref{Prop:Tate}).
Then we can convert the vanishing result \eqref{Eq:Vanish} of Tate cohomology groups
into that of Tate cohomology sheaves (Proposition \ref{Prop:Vanish}).
The other problem is that,
for a finite extension of complete discrete valuation fields  $L / K$ with residue extension $k' / k$,
we need to regard the group of units and the multiplicative group of $L$
as sheaves in several different ways,
some of which defined on $\pfpqc{k}$ and others on $\pfpqc{k'}$.
These are sheaf versions of the groups defined at \cite[XIII, \S 5, Exercise 2]{Ser79}.
To define these sheaves and make the discussion smooth,
we discuss a version of the Greenberg functor (\cite[V, \S 4, no.\ 1]{DG70}) in Section \ref{Sec:Green}.
All the above is the way we take here for Theorem \ref{Th:Main}
(and was for our original paper \cite{SY10}).
Of course this is not the only way to formulate and obtain the same theorem or its equivalent.
Several different approaches will be possible.
Nevertheless, the authors believe that the way we take here is at least one of the most standard ways.
Note that some part of the machinery in Sections \ref{Sec:Fpqc} and \ref{Sec:FormPf}
has already been used at \cite{Suz09}.

From now on throughout this paper,
we always mean by $k$ a fixed perfect field of characteristic $p > 0$.

%%%%%%%%%%%%%%%%%%%%%%%%%%%%%%%%%%%%%%%%%%%%%%%%%%%%%%%%%%%%%%%%%%%%%%%%%%%%%%%%%%%%%%%%%%%%%

\subsection{Definition and first properties} \label{Sec:FpqcMain}
We define the site $\pfpqc{k}$ below.
We first recall perfect rings and perfect schemes
(cf.\ \cite{Gre65}).

A $k$-algebra $R$ is called perfect if the $p$-th power map on $R$ is bijective.
Any perfect $k$-algebra $R$ is reduced,
since if $r^{n} = 0$ for $r \in R$ and $n \ge 1$,
then $r^{p^{e}} = r^{p^{e} - n} r^{n} = 0$ for $e \ge 0$ with $p^{e} > n$, so $r = 0$.%
	\footnote{This argument shows that
	the $p$-reducedness defined in \cite{Gre65} is the same as the usual reducedness.}
For a $k$-algebra $R$ and a non-negative integer $i$,
let $R^{(i)}$ be the $k$-algebra $R$ whose structure map is replaced by
the composite of the $p^{i}$-th power map $k \isomto k$ and the original structure map $k \to R$.
We denote by $R^{(\infty)}$ the perfect $k$-algebra
defined by the injective limit
	\[
		R \to R^{(1)} \to R^{(2)} \to \cdots,
	\]
where $R^{(i)} \to R^{(i + 1)}$ is the $p$-th power map.
Likewise, a $k$-scheme $X$ is perfect if the Frobenius morphism on $X$ is an isomorphism.
We denote by $\Perf{k}$ the full subcategory
of the category of $k$-schemes $\Sch{k}$ consisting of perfect $k$-schemes.
Perfectness is Zariski-local.
A perfect $k$-scheme is reduced.
For a $k$-scheme $X$ and a non-negative integer $i$,
we denote by $X^{(i)}$ the $k$-scheme $X$ whose structure morphism is replaced by
the composite of the original structure morphism $X \to \Spec k$
and the $p^{i}$-th power Frobenius morphism $\Spec k \isomto \Spec k$.
We denote by $X^{(\infty)}$ the perfect $k$-scheme defined by the projective limit
	\[
		\cdots \to X^{(2)} \to X^{(1)} \to X,
	\]
where $X^{(i + 1)} \to X^{(i)}$ is the Frobenius morphism.
The functor $(\infty) \colon \Sch{k} \to \Perf{k}$ sending $X \mapsto X^{(\infty)}$
is right adjoint to the inclusion functor $\Perf{k} \into \Sch{k}$.
The fiber product of two perfect $k$-schemes over a perfect $k$-scheme
taken in $\Sch{k}$ gives a perfect $k$-scheme.
A $k$-scheme (resp.\ $k$-algebra) is said to be quasi-algebraic
if it can be obtained by applying the functor $(\infty)$ to an algebraic (that is, of finite type) one.

Now we define a site $\pfpqc{k}$, which we call the perfect fpqc site of $k$, as follows.
The underlying category of $\pfpqc{k}$ is the category of perfect $k$-schemes $\Perf{k}$.
Its topology is the fpqc topology,
namely a family of morphisms $\{X_{\lambda} \to X\}$ in $\Perf{k}$
is a covering for $\pfpqc{k}$ if it is a covering for the fpqc site $\fpqc{k}$ of $k$
(cf.\ \cite[Exp.\ IV]{SGA3-1}, \cite{SGA4-1}).
We denote by $\psh{k}$ (resp.\ $\sh{k}$)
the category of sheaves of abelian groups on $\pfpqc{k}$ (resp.\ $\fpqc{k}$).

\begin{prp} \label{Prop:Site}
	The functor $(\infty) \colon \Sch{k} \to \Perf{k}$ gives a morphism of sites
	$\pfpqc{k} \to \fpqc{k}$.
	The pullback $(\infty)^{\ast} \colon \sh{k} \to \psh{k}$ is given by
	the restriction functor $|_{\Perf{k}}$.
	In particular, $|_{\Perf{k}}$ is an exact functor.
\end{prp}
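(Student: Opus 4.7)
The plan is to proceed in three steps: verify directly that restriction sends $\fpqc{k}$-sheaves to $\pfpqc{k}$-sheaves, identify this restriction with the inverse image functor of the morphism of sites induced by $(\infty)$, and then deduce exactness formally from the identification.

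The first step is straightforward. Let $F\in\sh{k}$. Any $\pfpqc{k}$-covering $\{U_\lambda\to U\}$ is by definition a family in $\Perf{k}$ that forms an $\fpqc{k}$-covering in $\Sch{k}$. The fiber products $U_\lambda\times_U U_\mu$ computed in $\Sch{k}$ are perfect, as recalled in the paragraph preceding the proposition, so they agree with the fiber products computed in $\Perf{k}$. The equalizer diagram encoding the sheaf axiom for $F|_{\Perf{k}}$ relative to $\{U_\lambda\to U\}$ is therefore literally the equalizer diagram testing the sheaf axiom for $F$ relative to the same family in $\fpqc{k}$, and exactness of the latter is granted since $F$ is a sheaf.

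For the second step I need to show $(\infty)\colon\Sch{k}\to\Perf{k}$ is continuous: it commutes with fiber products, because it is right adjoint to the inclusion $\Perf{k}\hookrightarrow\Sch{k}$, and it sends $\fpqc{k}$-coverings to $\pfpqc{k}$-coverings. Given an $\fpqc{k}$-covering $\{X_\lambda\to X\}$ in $\Sch{k}$, the induced family $\{X_\lambda^{(\infty)}\to X^{(\infty)}\}$ inherits topological surjectivity and quasi-compactness from the fact that the maps $X^{(\infty)}\to X$ and $X_\lambda^{(\infty)}\to X_\lambda$ are universal homeomorphisms. The technical heart is flatness: I would verify that if $B$ is a flat $A$-algebra then $B^{(\infty)}$ is flat over $A^{(\infty)}$, using that $B^{(\infty)}$ is isomorphic to the perfection of the $A^{(\infty)}$-flat base change $A^{(\infty)}\otimes_A B$ and that perfection of a flat algebra over the perfect ring $A^{(\infty)}$ remains flat, via a filtered-colimit argument over Frobenius twists. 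Once continuity is established, the direct image $(\infty)_*\colon\psh{k}\to\sh{k}$ is characterized by $((\infty)_*G)(X)=G(X^{(\infty)})$, and an elementary adjunction calculation identifies its left adjoint with restriction: a natural family of maps $F(X)\to G(X^{(\infty)})$ for $X\in\Sch{k}$ restricts to a family of maps $F(Y)\to G(Y)$ for $Y\in\Perf{k}$ by virtue of $Y^{(\infty)}=Y$, and conversely extends by composition with the adjunction counits $X^{(\infty)}\to X$. By uniqueness of adjoints, $|_{\Perf{k}}=(\infty)^*$.

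Exactness is then formal. Restriction preserves all limits, as these are computed pointwise at the presheaf level and coincide with sheaf-theoretic limits. As the inverse image of a morphism of topoi, or equivalently a left adjoint to $(\infty)_*$, restriction also preserves all colimits. Hence $|_{\Perf{k}}$ is exact. The principal obstacle in the whole argument is the flatness-preservation claim for perfection in the second step; once that is in hand, every other ingredient is formal bookkeeping about adjunctions and the sheaf axioms.
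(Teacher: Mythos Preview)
Your proposal is correct. The one substantive difference from the paper is in the flatness step. The paper dispatches it in a single observation: forgetting the $k$-algebra structure, one has $R^{(i)} = R$ and $S^{(i)} = S$ as rings for every $i \ge 0$ (the Frobenius twist only alters the structure morphism from $k$), so each $S^{(i)}$ is tautologically flat over $R^{(i)}$, and $S^{(\infty)}$ is then flat over $R^{(\infty)}$ by passage to the filtered colimit. Your route through $B^{(\infty)} \cong (A^{(\infty)} \otimes_{A} B)^{(\infty)}$ and the claim that perfection over a perfect base preserves flatness is correct, but adds an extra layer that the paper's observation short-circuits entirely. Conversely, you spell out the identification $(\infty)^{\ast} = |_{\Perf{k}}$ via the adjunction with $(\infty)_{\ast}$ and the resulting exactness much more explicitly than the paper, which simply declares these points ``the only non-trivial part is \dots'' and omits them; your treatment of those steps is fine and arguably more informative.
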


\begin{proof}
	The only non-trivial part is that
	$(\infty) \colon \Sch{k} \to \Perf{k}$ sends
	covering families for $\fpqc{k}$ to those for $\pfpqc{k}$.
	It suffices to show that if $S$ is a flat algebra over a $k$-algebra $R$,
	then $S^{(\infty)}$ is flat over $R^{(\infty)}$.
	We have $S^{(i)} = S$ and $R^{(i)} = R$ for $i \ge 0$
	if we forget the $k$-algebra structures.
	Therefore $S^{(i)}$ is flat over $R^{(i)}$.
	Taking injective limits, we know that $S^{(\infty)}$ is flat over $R^{(\infty)}$.
\end{proof}

If $F \in \psh{k}$, then 
for a perfect $k$-algebra $R$ and a finite family of perfect $R$-algebras $R_{i}$
with $\prod R_{i}$ faithfully flat over $R$,
the sequence
	$
			F(R)
		\to
			\prod_{i} F(R_{i})
		\rightrightarrows
			\prod_{i, j} F(R_{i} \tensor_{R} R_{j})
	$
is exact.
Conversely, if a covariant functor $F$ from the category of perfect $k$-algebras
to the category of abelian groups satisfies this condition,
then the Zariski-sheafification of $F$ is in $\psh{k}$.
This correspondence sets up an equivalence of categories,
which follows from the corresponding fact for the usual fpqc site
(cf.\ for example \cite[Prop.\ 9.3 and Cor.\ 9.4]{Kre10})
and the fact that perfectness is Zariski-local.

%%%%%%%%%%%%%%%%%%%%%%%%%%%%%%%%%%%%%%%%%%%%%%%%%%%%%%%%%%%%%%%%%%%%%%%%%%%%%%%%%%%%%%%%%%%%%

\subsection{Homotopy groups and fundamental groups} \label{Sec:Homot}
Let $\Ext_{k}^{i}$ be the $i$-th Ext functor for $\psh{k}$.
For $i \ge 0$, we define the $i$-th homotopy group of $A \in \psh{k}$,
denoted by $\pi_{i}^{k}(A)$,
to be the Pontryagin dual of the torsion abelian group $\injlim_{n} \Ext_{k}^{i}(A, n^{-1} \Z / \Z)$.
We call $\pi_{1}^{k}(A)$ the fundamental group of $A$.
The system $\{\pi_{i}^{k}\}_{i \ge 0}$ is a covariant homological functor
from $\psh{k}$ to the category of profinite abelian groups.

\begin{prp} \label{Prop:Weil}
	Let $k' / k$ be a finite extension.
	We denote by $\Res_{k' / k}$ the Weil restriction functor
	$\psh{k'} \to \psh{k}$.
	\begin{enumerate}
		\item \label{Enum:WeilAdj}
			$\Res_{k' / k}$ is left adjoint to the restriction functor
			$\psh{k} \to \psh{k'}$.
			In particular, $\Res_{k' / k}$ is an exact functor.
		\item \label{Enum:HomotInv}
			We have a canonical isomorphism
			$\pi_{i}^{k}(\Res_{k' / k} F') \cong \pi_{i}^{k'}(F')$
			for $F' \in \psh{k'}$ and $i \ge 0$.
	\end{enumerate}
\end{prp}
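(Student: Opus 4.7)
The plan is to construct $\Res_{k'/k}$ by an explicit formula, verify the adjunction with restriction by unwinding definitions, and then deduce both parts from the adjunction together with the exactness of base change.

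For part (\ref{Enum:WeilAdj}), I would define, for $F' \in \psh{k'}$ and $R \in \Perf{k}$,
\[
(\Res_{k'/k} F')(R) = F'(R \tensor_{k} k'),
\]
noting that $R \tensor_{k} k'$ is automatically a perfect $k'$-algebra because $k'/k$ is finite over the perfect field $k$ and hence \'etale, so base change preserves perfectness (the Frobenius on $R \tensor_{k} k'$ factors as the tensor product of the Frobenii on $R$ and $k'$, both isomorphisms). The resulting presheaf on $\pfpqc{k}$ is a sheaf because $- \tensor_{k} k'$ sends fpqc coverings in $\pfpqc{k}$ to fpqc coverings in $\pfpqc{k'}$ (by flatness of $k'$ over $k$). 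The natural adjunction with the restriction functor is then a formal bookkeeping exercise, converting a natural transformation on one side into one on the other via the canonical insertion map $R \to R \tensor_{k} k'$ for $R \in \Perf{k}$ and the multiplication map $R' \tensor_{k} k' \to R'$ for $R' \in \Perf{k'}$. Exactness of $\Res_{k'/k}$ is immediate from the exactness of $-\tensor_{k} k'$, which holds because $k'$ is a free $k$-module of rank $[k' : k]$.

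For part (\ref{Enum:HomotInv}), since $\Res_{k'/k}$ is exact and adjoint to the restriction, the restriction functor preserves injective objects (standard: the right adjoint of an exact functor preserves injectives). Therefore, for an injective resolution $n^{-1} \Z / \Z \to J^{\bullet}$ in $\psh{k}$, the restricted complex $J^{\bullet}|_{k'}$ is an injective resolution of the constant sheaf $n^{-1} \Z / \Z$ in $\psh{k'}$ (constant sheaves restrict to constant sheaves). The adjunction identity from part (\ref{Enum:WeilAdj}) applied termwise yields an isomorphism of complexes $\Hom_{\psh{k}}(\Res_{k'/k} F', J^{\bullet}) \cong \Hom_{\psh{k'}}(F', J^{\bullet}|_{k'})$, and taking cohomology gives
\[
\Ext_{k}^{i}(\Res_{k'/k} F', n^{-1} \Z / \Z) \cong \Ext_{k'}^{i}(F', n^{-1} \Z / \Z).
\]
Passing to the inductive limit over $n$ and then taking Pontryagin duals produces the desired isomorphism $\pi_{i}^{k}(\Res_{k'/k} F') \cong \pi_{i}^{k'}(F')$.

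The main obstacle I anticipate is the clean verification of the adjunction itself: it is purely formal but requires careful tracking of the two structural maps (insertion and multiplication) and a check that the constructions descend through sheafification. Once the adjunction is in hand, both the exactness in (\ref{Enum:WeilAdj}) and the derived-functor compatibility in (\ref{Enum:HomotInv}) are essentially mechanical consequences, relying only on the flatness of $k'$ over $k$ and the invariance of constant sheaves under restriction.
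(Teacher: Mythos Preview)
Your argument has a genuine gap in part~(\ref{Enum:WeilAdj}): the two structural maps you name --- the insertion $R \to R \tensor_{k} k'$ for $R \in \Perf{k}$ and the multiplication $R' \tensor_{k} k' \to R'$ for $R' \in \Perf{k'}$ --- are precisely the unit and counit of the \emph{right} adjunction $f^{\ast} \dashv f_{\ast}$ (restriction is left adjoint to $\Res_{k'/k}$), not of the left adjunction $f_{\ast} \dashv f^{\ast}$ asserted in the proposition. Indeed, the insertion induces $G(R) \to G(R \tensor_{k} k')$, which is the unit $G \to f_{\ast} f^{\ast} G$, and the multiplication induces $F'(R' \tensor_{k} k') \to F'(R')$, which is the counit $f^{\ast} f_{\ast} F' \to F'$. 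For the left adjunction one needs maps in the opposite directions: a counit $\Res_{k'/k}(G|_{k'}) \to G$, i.e.\ a natural map $G(R \tensor_{k} k') \to G(R)$, and there is no $k$-algebra map $R \tensor_{k} k' \to R$ to produce it. What is required instead is a \emph{trace} map, available because $\Spec k' \to \Spec k$ is finite \'etale (so $R \tensor_{k} k'$ is locally $R^{[k':k]}$ and one sums over the sheets); dually, the unit $F' \to f^{\ast} f_{\ast} F'$ comes from the idempotent splitting $k' \tensor_{k} k' \cong k' \times (\text{complement})$, again an \'etale phenomenon. The paper handles this by identifying $\Res_{k'/k}$ with pushforward along the finite \'etale morphism $\Spec k' \to \Spec k$ and invoking the standard fact (Milne, \emph{\'Etale Cohomology}, V, \S 1, Lem.~1.12) that for such morphisms $f_{\ast} = f_{!}$ is simultaneously left and right adjoint to $f^{\ast}$.

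This matters for part~(\ref{Enum:HomotInv}) as well: your argument there uses that restriction, being right adjoint to the exact functor $\Res_{k'/k}$, preserves injectives, and then applies the adjunction identity $\Hom_{k}(\Res_{k'/k} F', J) \cong \Hom_{k'}(F', J|_{k'})$. Both of these steps are exactly the left adjunction; the right adjunction you actually verified gives $\Hom_{k}(G, \Res_{k'/k} F') \cong \Hom_{k'}(G|_{k'}, F')$, with $\Res_{k'/k} F'$ in the wrong slot. Once you supply the trace map (or cite the finite-\'etale biadjunction), the rest of your argument goes through; your direct proof of exactness and your derivation of~(\ref{Enum:HomotInv}) from the adjunction are otherwise fine and match the paper's approach.
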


\begin{proof}
	\ref{Enum:WeilAdj}.
	Let $F' \in \psh{k'}$ and $G \in \psh{k}$.
	They are sheaves also for the big \'etale site of perfect schemes
	respectively over $k'$ and over $k$.
	The Weil restriction functor is the pushforward functor
	by the finite \'etale morphism $\Spec k' \to \Spec k$.
	The claim can thus be proved by the same argument as
	the proof of \cite[V, \S 1, Lem.\ 1.12]{Mil80}.
	
	\ref{Enum:HomotInv}.
	Assertion \ref{Enum:WeilAdj} implies that
	$\Ext_{k}^{i}(\Res_{k' / k} F', G) \cong \Ext_{k'}^{i}(F', G)$ for all $i \ge 0$.
	Setting $G = n^{-1} \Z / \Z$, taking the injective limit in $n$ and taking the Pontryagin dual,
	we get the result.
\end{proof}

%%%%%%%%%%%%%%%%%%%%%%%%%%%%%%%%%%%%%%%%%%%%%%%%%%%%%%%%%%%%%%%%%%%%%%%%%%%%%%%%%%%%%%%%%%%%%

\subsection{Affine proalgebraic groups and \'etale group schemes} \label{Sec:Thick}
We show that $\psh{k}$ contains both the category of commutative affine proalgebraic groups
and the category of commutative \'etale group schemes
as abelian thick full subcategories.

\begin{prp} \label{Prop:ThickAff}
	The natural functor from the category of commutative affine proalgebraic groups over $k$
	to $\psh{k}$ is a fully faithful exact functor.
	Its essential image is the category of commutative perfect affine group schemes,
	which is thick (i.e.\ closed under extension) in $\psh{k}$.
\end{prp}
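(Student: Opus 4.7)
The plan is to establish the four parts of the proposition—definition of the functor, full faithfulness, exactness, description of the essential image, and thickness—in turn. For the functor, a commutative affine proalgebraic group $A = \projlim_{\lambda} A_{\lambda}$ with each $A_{\lambda}$ a commutative affine quasi-algebraic group over $k$ is sent to the affine $k$-scheme $\Spec \mathcal{O}(A)$ where $\mathcal{O}(A) := \injlim_{\lambda} \mathcal{O}(A_{\lambda})$. Each $\mathcal{O}(A_{\lambda})$ is a perfect commutative $k$-Hopf algebra, and since perfectness is preserved by filtered colimits, so is $\mathcal{O}(A)$; hence $\Spec \mathcal{O}(A)$ is a perfect commutative affine $k$-group scheme, representing an object of $\psh{k}$.

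For full faithfulness, I would identify both Hom-sets with the set of morphisms of perfect commutative $k$-Hopf algebras $\mathcal{O}(B) \to \mathcal{O}(A)$. On the proalgebraic side, each $\mathcal{O}(B_{\mu})$ is the perfection of a finitely generated $k$-algebra, which by the adjunction of Proposition \ref{Prop:Site} makes it compact in the category of perfect $k$-algebras; the cofinality formula $\Hom(A, B) = \projlim_{\mu} \injlim_{\lambda} \Hom(A_{\lambda}, B_{\mu})$ then collapses to $\Hom_{k\text{-Hopf}}(\mathcal{O}(B), \mathcal{O}(A))$. On the sheaf side this is just Yoneda, since morphisms of perfect commutative affine group schemes coincide with morphisms of their perfect Hopf algebras. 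For exactness, any short exact sequence $0 \to A \to B \to C \to 0$ of commutative affine proalgebraic groups can be presented, after reindexing, as an inverse limit of short exact sequences $0 \to A_{\lambda} \to B_{\lambda} \to C_{\lambda} \to 0$ of commutative affine quasi-algebraic groups; each quotient $B_{\lambda} \to C_{\lambda}$ is faithfully flat by the standard result for affine commutative algebraic groups (\cite{DG70}), a property preserved under perfection and filtered colimits. Thus $\mathcal{O}(C) \to \mathcal{O}(B)$ is faithfully flat, so $B \to C$ is an fpqc cover and hence surjective in $\psh{k}$, with $A$ as kernel computed by the scheme-theoretic fiber over the identity.

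For the essential image, given a perfect commutative affine $k$-group scheme $G = \Spec R$, the Hopf algebra $R$ is a filtered union of finitely generated sub-Hopf algebras $R_{\lambda}$ (a standard property of affine commutative group schemes). Since $R$ is perfect, each perfection $R_{\lambda}^{(\infty)}$ sits inside $R$ and remains a sub-Hopf algebra, and $R = \injlim_{\lambda} R_{\lambda}^{(\infty)}$. Each $\Spec R_{\lambda}^{(\infty)} = (\Spec R_{\lambda})^{(\infty)}$ is a commutative affine quasi-algebraic group, presenting $G$ as a commutative affine proalgebraic group. For thickness, consider a short exact sequence $0 \to A \to \tilde{B} \to C \to 0$ in $\psh{k}$ with $A$ and $C$ perfect commutative affine $k$-group schemes. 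The sheaf $\tilde{B}$ is an $A$-torsor over $C$ in the fpqc topology; after an fpqc cover $X \to C$, $\tilde{B} \times_{C} X \cong A \times_{k} X$ is an affine scheme. By fpqc descent for affine morphisms, $\tilde{B} \to C$ is affine, and since $C$ is affine so is $\tilde{B}$, represented by an affine $k$-scheme $B$. The perfection $B^{(\infty)}$ represents the same sheaf on $\pfpqc{k}$ (both sides agree on perfect test objects by the adjunction of Proposition \ref{Prop:Site}), and the group structure on $\tilde{B}$ endows $B^{(\infty)}$ with a commutative perfect affine group-scheme structure via Yoneda.

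The main obstacle I anticipate is the thickness step, which rests on two points: first, that fpqc descent for affine morphisms can be invoked in the perfect setting, and second, that any representable sheaf on $\pfpqc{k}$ admits a canonical perfect representative by applying the functor $(\infty)$ to any representing scheme. The essential-image step is also slightly delicate, since one has to use that Hopf-subalgebra filtrations and perfection cooperate; but this reduces to the standard fact that commutative affine Hopf algebras are filtered unions of finitely generated sub-Hopf algebras, together with the elementary observation that perfections of sub-Hopf algebras of a perfect Hopf algebra lie inside it.
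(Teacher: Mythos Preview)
Your proof is correct and follows the same overall four-part structure as the paper, with closely matching arguments for full faithfulness, essential image, and thickness (your thickness argument even makes explicit the perfection step that the paper handles by a terse reference to \cite[III, \S 4, 1.9]{DG70}). The one place where you diverge is exactness: you argue directly that a surjection $B \to C$ of affine proalgebraic groups is faithfully flat, by presenting it as a filtered limit of surjections of quasi-algebraic groups and invoking preservation of faithful flatness under perfection and filtered colimits. The paper instead passes to the full site $\fpqc{k}$, takes the cokernel of an injection $A \hookrightarrow B$ there (representable by an affine group scheme via \cite[III, \S 3, 7.2]{DG70}), uses the exactness of the restriction functor $|_{\Perf{k}}$ from Proposition~\ref{Prop:Site} to identify this with the cokernel in $\psh{k}$, and then checks perfectness by a direct Frobenius-diagram argument. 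Your route is more self-contained and avoids the detour through $\sh{k}$, but it leans on the claim that faithful flatness survives the filtered colimit; this is true (it reduces to the fact that an injection of commutative Hopf algebras over a field is faithfully flat, which holds without finite-type hypotheses), though you might want to cite that explicitly rather than leave it as ``preserved under filtered colimits.''
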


\begin{proof}
	Fully faithful.
	Let $A$, $B$ be commutative affine proalgebraic groups over $k$.
	By definition, $A$ (resp.\ $B$) can be written as the projective limit of
	some affine quasi-algebraic groups $A_{\lambda}$ (resp.\ $B_{\mu}$) over $k$.
	For an affine scheme $X$,
	we denote by $\Order(X)$ the ring of global sections of the structure sheaf of $X$.
	We denote by $\Hom_{\text{proalg}}$
	(resp.\ $\Hom_{\text{quasialg}}$, $\Hom_{\text{bialg}}$)
	the set of homomorphisms in the category of proalgebraic groups
	(resp.\ quasi-algebraic groups, bi-algebras) over $k$.
	We have
		\begin{align*}
					\Hom_{\text{proalg}}(A, B)
			&	=
					\projlim_{\mu} \,\, \injlim_{\lambda} \,\,
						\Hom_{\text{quasialg}}(A_{\lambda}, B_{\mu})
			\\
			&	=
					\projlim_{\mu} \,\, \injlim_{\lambda} \,\,
						\Hom_{\text{bialg}}(\Order(B_{\mu}), \Order(A_{\lambda}))
			\\
			&	=
					\projlim_{\mu} \,\,
						\Hom_{\text{bialg}}(\Order(B_{\mu}),  \injlim_{\lambda} \Order(A_{\lambda}))
			\\
			&	=
						\Hom_{\text{bialg}}(\injlim_{\mu} \Order(B_{\mu}),  \injlim_{\lambda} \Order(A_{\lambda}))
			\\
			&	=
					\Hom_{\text{bialg}}(\Order(B), \Order(A))
			\\
			&	=
					\Hom_{k}(A, B).
		\end{align*}
	(Here $\Hom_{k}$ is, as before, the set of homomorphisms in $\psh{k}$.)
	
	Essential image.
	Any affine group scheme over $k$ can be written
	as the projective limit of affine algebraic group schemes
	(\cite[III, \S 3, 7.5 Cor.\ (b)]{DG70}).
	This implies the result.
	
	Exact.
	It suffices to show that, for an injection of commutative affine proalgebraic groups $A \into B$,
	its cokernel in $\psh{k}$ is an affine proalgebraic group,
	or equivalently, a perfect affine group scheme.
	Since $A$ and $B$ are affine, we can naturally regard them as sheaves on $\fpqc{k}$.
	Let $C$ be the cokernel of $A \into B$ in $\sh{k}$.
	Since the restriction functor $|_{\Perf{k}} \colon \sh{k} \to \psh{k}$ is an exact functor
	by Proposition \ref{Prop:Site}, we know that
	$C|_{\Perf{k}}$ gives the cokernel of $A \into B$ in $\psh{k}$.
	The remaining task is to show that $C$ is representable by a perfect affine group scheme.
	By \cite[III, \S 3, 7.2 Th.]{DG70}, $C$ is representable by an affine group scheme.
	Consider the commutative diagram in $\sh{k}$ with exact rows
		\[
			\begin{CD}
					0 @>>> A^{(1)} @>>> B^{(1)} @>>> C^{(1)} @>>> 0
				\\
					@. @VVV @VVV @VVV @.
				\\
					0 @>>> A @>>> B @>>> C @>>> 0,
			\end{CD}
		\]
	where vertical arrows are given by the Frobenius morphisms.
	Since $A$ and $B$ are perfect,
	the first and second vertical morphisms are isomorphism,
	hence so is the third.
	Therefore $C$ is perfect.
	
	Thick.
	Let $0 \to A \to B \to C \to 0$ be an exact sequence in $\psh{k}$
	with $A$ and $C$ perfect affine.
	Then $B$ is an $A$-torsor over $C$ for the perfect fpqc topology.
	Therefore $B$ is perfect affine
	by the fpqc descent for affine morphisms
	together with an argument similar to the proof of \cite[III, \S 4, 1.9 Prop.\ (a)]{DG70}.
\end{proof}

\begin{prp} \label{Prop:ThickEt}
	Commutative \'etale group schemes over $k$
	form an abelian thick full subcategory of $\psh{k}$.
\end{prp}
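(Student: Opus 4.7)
The plan is to reduce the statement to the classical equivalence between commutative \'etale group schemes over $k$ and discrete $\Gal(\algcl{k}/k)$-modules, and to check each closure property in $\psh{k}$ via this identification. First I would observe that, since $k$ is perfect, any \'etale $k$-scheme is a disjoint union of spectra of finite separable (hence perfect) extensions of $k$ and so is itself perfect; consequently every commutative \'etale group scheme represents an object of $\psh{k}$, and full faithfulness of the inclusion reduces to the Yoneda lemma on $\Perf{k}$.

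To check closure under kernels and cokernels, I would argue as follows. The scheme-theoretic kernel of a morphism of \'etale group schemes is again \'etale (\'etale morphisms are stable under fiber products) and agrees with the $\psh{k}$-kernel, since representable functors preserve limits. For a morphism $f \colon A \to B$ of commutative \'etale group schemes, the plan is to form the cokernel $C_{0}$ on the level of discrete $\Gal(\algcl{k}/k)$-modules and lift it to an \'etale group scheme. The induced map $B \to C_{0}$ is surjective as a sheaf on the small \'etale site of $k$ and hence also in $\psh{k}$, while its kernel can be identified with the image of $f$ by evaluation at $\algcl{k}$ (the relevant functor being exact on \'etale objects); thus $C_{0}$ represents the cokernel of $f$ in $\psh{k}$.

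For thickness, let $0 \to A \to B \to C \to 0$ be exact in $\psh{k}$ with $A$ and $C$ commutative \'etale group schemes. Then $B$ is an $A$-torsor over $C$ in $\pfpqc{k}$, and it suffices to show that such a torsor is \'etale-locally trivial: in that case $B$ is representable by a scheme \'etale over $C$, hence \'etale over $k$, and it inherits a commutative group structure from the extension. \'Etale-local triviality amounts to the bijectivity of the canonical map $H^{1}_{\mathrm{et}}(C, A) \to H^{1}_{\mathrm{fpqc}}(C, A)$ for commutative \'etale $A$. For finite \'etale $A$ this follows from fpqc descent for finite \'etale morphisms, and the general case reduces to this by a filtered-colimit argument, writing $A$ as the union of its finite \'etale subgroup schemes corresponding to finitely generated sub-$\Gal(\algcl{k}/k)$-modules of $A(\algcl{k})$. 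This last step---the comparison of fpqc and \'etale torsors under a commutative \'etale group---is the principal technical obstacle; granting it, the rest of the argument is a formal consequence of the Galois-module description.
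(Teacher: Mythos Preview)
Your plan is sound for the easy parts, and the paper in fact dismisses full faithfulness and closure under kernels and cokernels in one sentence. The substance is thickness, and there you correctly isolate the question as a comparison of \'etale and fpqc torsors under an \'etale group scheme.

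However, your reduction to finite $A$ has a genuine gap. You propose to write $A$ as the filtered union of ``finite \'etale subgroup schemes corresponding to finitely generated sub-$\Gal(\algcl{k}/k)$-modules of $A(\algcl{k})$'', but a finitely generated submodule need not be finite, so the corresponding \'etale subgroup scheme need not be finite over $k$. For $A = \Z$ the only finite subgroup is $0$, and your colimit recovers nothing. Since $\Z$ is precisely the \'etale group scheme that motivates the proposition (it appears in $\alg{K}^{\times} \cong \alg{U}_{K} \times \Z$), this case cannot be set aside. Even granting that a given fpqc cocycle $\sigma \colon \Spec(R \tensor_{k} R) \to A$ lands in finitely many components of $A$ by quasi-compactness, the subgroup those components generate can still be infinite, so one does not obtain an $A'$-torsor with $A'$ finite \'etale.

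The paper exploits the same finiteness observation but on the other side: instead of shrinking $A$, it shrinks the cover $R$. After reducing to a torsor over $\Spec k$, the Amitsur cocycle $\sigma$ factors through a finite subscheme $\Spec(k_{1} \times \dots \times k_{n}) \subset A$ by quasi-compactness, and the resulting algebra map $k_{1} \times \dots \times k_{n} \to R \tensor_{k} R$ involves only finitely many elements of $R$, hence factors through $R_{1} \tensor_{k} R_{1}$ for a quasi-algebraic subalgebra $R_{1} \subset R$. One checks that the descended map is still a cocycle, so the torsor is already trivialized by $\Spec R_{1}$; Noether normalization then supplies a closed point of $\Spec R_{1}$ with residue field finite over $k$, and the torsor trivializes over its Galois closure. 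This argument is indifferent to whether $A$ is torsion.
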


\begin{proof}
	Any scheme \'etale over $k$ is a disjoint union of the $\Spec$'s of
	(an infinite number of) finite extensions of the perfect field $k$.
	Such a scheme is perfect.
	The proposition is obvious except the thickness.
	For the thickness, it is enough to show that
	a torsor $B$ over $\Spec k$ for the perfect fpqc topology
	under a commutative \'etale group scheme $A$ is representable by an \'etale scheme.
	Such a torsor can be trivialized by extending the base $\Spec k$
	to a perfect affine $k$-scheme $\Spec R$ faithfully flat over $\Spec k$
	(i.e.\ $R \ne 0$).
	It is enough to show that $R$ can be taken to be a finite Galois extension of $k$.
	
	Actually it is enough to show that $R$ can be taken to be
	quasi-algebraic (see Section \ref{Sec:FpqcMain} for the definition) over $k$
	because of the following argument.
	Let $\mathfrak{m}$ be a maximal ideal of $R \ne 0$.
	If $R$ is quasi-algebraic, then
	$R / \mathfrak{m}$ is a finite extension of $k$
	by the Noether normalization theorem.
	Take a finite Galois extension $k'$ of $k$ containing $R / \mathfrak{m}$.
	Then we can replace $R$ by $k'$.
	
	Now we show that $R$ can be taken to be quasi-algebraic over $k$.
	Recall from \cite[III, \S 4, 6.5]{DG70} that
	there is an isomorhism between the group of $A$-torsors over $\Spec k$ that can be trivialized by $\Spec R$
	and the first Amitsur cohomology group $H_{\mathrm{Am}}^{1}(R / k, A)$ of $R / k$ with coefficients in $A$.
	Therefore it is enough to show that there exists a quasi-algebraic $k$-subalgebra $R_{1}$ of $R$ such that
	the Amitsur cocycle class $\bar{\sigma} \in H_{\mathrm{Am}}^{1}(R / k, A)$ that corresponds to the torsor $B$
	belongs to the subgroup $H_{\mathrm{Am}}^{1}(R_{1} / k, A)$.
	Let $\sigma \colon \Spec R \times_{k} \Spec R \to A$ be a representative of the cocycle class $\bar{\sigma}$.
	It is enough to show that there exists a quasi-algebraic $k$-subalgebra $R_{1}$ of $R$ such that
	$\sigma$ factors as the natural morphism
	$\Spec R \times_{k} \Spec R \to \Spec R_{1} \times_{k} \Spec R_{1}$
	followed by some cocycle $\sigma_{1} \colon \Spec R_{1} \times_{k} \Spec R_{1} \to A$.
	We first construct such $\sigma_{1}$ just as a morphism of schemes and then show that it is indeed a cocycle.
	
	Since $\Spec R \times_{k} \Spec R = \Spec(R \tensor_{k} R)$ is quasi-compact
	and $A$ is \'etale over $k$,
	the morphism $\sigma \colon \Spec R \times_{k} \Spec R \to A$ factors through
	a finite subscheme $\Spec(k_{1} \times \dots \times k_{n})$ of $A$,
	where the $k_{i}$ are finite extensions of $k$.
	The corresponding $k$-algebra homomorphism
	$k_{1} \times \dots \times k_{n} \to R \tensor_{k} R$
	factors through $R_{1} \tensor_{k} R_{1}$,
	where $R_{1}$ is a quasi-algebraic $k$-subalgebra of $R$.
	Therefore the morphism $\sigma \colon \Spec R \times_{k} \Spec R \to A$ factors as
	$\Spec R \times_{k} \Spec R \to \Spec R_{1} \times_{k} \Spec R_{1}$
	followed by $\Spec R_{1} \times_{k} \Spec R_{1} \to A$.
	We denote this morphism $\Spec R_{1} \times_{k} \Spec R_{1} \to A$ by $\sigma_{1}$.
	
	Finally we show that the morphism $\sigma_{1} \colon \Spec R_{1} \times_{k} \Spec R_{1} \to A$ is a cocycle,
	namely its coboundary
		$
				\partial \sigma_{1}
			\colon
				\Spec R_{1} \times_{k} \Spec R_{1} \times_{k} \Spec R_{1}
			\to
				A
		$
	is zero.
	The composite of the natural morphism
		$
				\Spec R \times_{k} \Spec R \times_{k} \Spec R
			\to
				\Spec R_{1} \times_{k} \Spec R_{1} \times_{k} \Spec R_{1}
		$
	and $\partial \sigma_{1}$ is $\partial \sigma$,
	which is zero since $\sigma$ is a cocycle.
	This implies that $\partial \sigma_{1}$ is zero since
		$
				R_{1} \tensor_{k} R_{1} \tensor_{k} R_{1}
			\to
				R \tensor_{k} R \tensor_{k} R
		$
	is injective.
	Therefore $\sigma_{1}$ is a cocycle.
	This completes the proof.
	
\end{proof}

These propositions have several consequences.
By Proposition \ref{Prop:ThickAff},
we may identify the category of commutative affine proalgebraic groups
with the category of commutative perfect affine group schemes,
which is the quotient category of the category of commutative affine group schemes
by its full subcategory of proinfinitesimal groups.

For commutative affine proalgebraic groups $A$ and $B$,
the group of first extension classes of $B$ by $A$ as commutative proalgebraic groups
is the same as that as sheaves of abelian groups on $\pfpqc{k}$ and on $\fpqc{k}$
by Proposition \ref{Prop:ThickAff}.
In particular, our $\pi_{1}^{k}(A)$ for a commutative proalgebraic group $A$
defined in Section \ref{Sec:Homot} coincides with Hazewinkel's $\pi_{1}^{k}(A)$.
For a commutative \'etale group $A$ over $k$,
the first cohomology group of $\pfpqc{k}$ with values in $A$
is equal to the group of first extension classes of $\Z$ by $A$ in $\pfpqc{k}$,
which is equal to that in the \'etale site of $k$ by Proposition \ref{Prop:ThickEt},
which in turn is equal to the Galois cohomology group
$H^{1}(k, A) = H^{1}(\Gal(\algcl{k} / k), A(\algcl{k}))$.%
	\footnote{Do not confuse this type of Galois cohomology groups
	with Tate cohomology sheaves that we define in the next section.}
In particular, we have
$\injlim_{n} \Ext_{k}^{1}(\Z, n^{-1} \Z / \Z) \cong H^{1}(\Gal(\algcl{k} / k), \Q / \Z)$,%
	\footnote{$\Ext_{k}^{1}$ here and $\Hom_{k}$ below are relative to the site $\pfpqc{k}$ as before.}
	which shows that $\pi_{1}^{k}(\Z) \cong \Gal(k^{\ab} / k)$.

We define two homomorphisms, \eqref{Eq:HomOnRat} and \eqref{Eq:HomOnRatF} below,
that are related to $\pi_{1}^{k}$
and will be used later in Section \ref{Sect:Aux}.
For a sheaf $A \in \psh{k}$,
we have a natural homomorphism
	\begin{equation} \label{Eq:HomOnRat}
			A(k)
		=
			\Hom_{k}(\Z, A)
		\overset{\pi_{1}^{k}}{\to}
			\Hom(\pi_{1}^{k}(\Z), \pi_{1}^{k}(A))
		\cong
			\Hom(\Gal(k^{\ab} / k), \pi_{1}^{k}(A)).
	\end{equation}
More explicitly,
this comes from the homomorphism
$\Ext_{k}^{1}(A, N) \to \Hom(A(k), H^{1}(k, N))$ for finite constant $N$
that sends an extension class $0 \to N \to B \to A \to 0$
to the coboundary map of Galois cohomology $A(k) = H^{0}(k, A) \to H^{1}(k, N)$.
If $k$ is quasi-finite in the sense of \cite[XIII, \S 2]{Ser79},
then $\Gal(\algcl{k} / k) \cong \hat{\Z}$,
so \eqref{Eq:HomOnRat} is reduced to a homomorphism
	\begin{equation} \label{Eq:HomOnRatF}
		A(k) \to \pi_{1}^{k}(A).
	\end{equation}

If we further assume that $A$ is a connected affine proalgebraic group over $k$,
then the homomorphism \eqref{Eq:HomOnRatF} above and hence $\pi_{1}^{k}(A)$ can be understood nearly completely
by the following proposition.
This result will not be used later.

\begin{prp}
	Assume $k$ is quasi-finite.
	Let $A$ be a connected affine proalgebraic group over $k$.
	\begin{enumerate}
		\item \label{Enum:PiOne:QFin}
			The homomorphism \eqref{Eq:HomOnRatF} induces an isomorphism
			from the completion of $A(k)$ by its normic subgroups
			(\cite[XV, \S 1, Exercise 2]{Ser79}) to $\pi_{1}^{k}(A)$.
		\item \label{Enum:PiOne:Fin}
			If $k$ is a finite field with $q$ elements,
			then the homomorphism \eqref{Eq:HomOnRatF} is an isomorphism.
			Its inverse is given by the boundary map of the homotopy long exact sequence
			for Lang's short exact sequence
			$0 \to A(k) \to A \overset{F - 1}{\to} A \to 0$
			(\cite[III, \S 5, 7.2]{DG70}),
			where $F$ is the $q$-th power Frobenius morphism.
	\end{enumerate}
\end{prp}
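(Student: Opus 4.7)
For part (1), the strategy is to compute the kernel and image of \eqref{Eq:HomOnRatF} explicitly, using that $H^{1}(k, N) \hookrightarrow H^{1}(k, \Q / \Z) = \Q / \Z$ for every finite constant group $N$ over the quasi-finite $k$. An element of $\injlim_{n} \Ext_{k}^{1}(A, n^{-1} \Z / \Z)$ is represented by a short exact sequence $0 \to N \to B \to A \to 0$ with $N$ finite constant, and its pairing with $a \in A(k)$ coming from the Pontryagin dual of \eqref{Eq:HomOnRatF} is $\delta_{B}(a) \in H^{1}(k, N) \subset \Q / \Z$, where $\delta_{B}$ is the connecting map of Galois cohomology. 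The kernel of $\delta_{B}$ is $\mathrm{Im}(B(k) \to A(k))$, and as $B \to A$ runs over isogenies with finite constant kernel these images are cofinal among the normic subgroups of $A(k)$ in the sense of \cite[XV, \S 1, Exercise 2]{Ser79}. Hence \eqref{Eq:HomOnRatF} factors through the normic completion as an injection. For surjectivity of the limit map, I would invoke a Lang-type vanishing $H^{1}(k, B) = 0$ for connected affine proalgebraic $B$ over quasi-finite $k$, which implies that each $\delta_{B}$ is surjective onto $H^{1}(k, N)$; passing to the inverse limit and using profiniteness on both sides gives the desired isomorphism.

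For part (2), take $k = \F_{q}$. The morphism $F - 1 \colon A \to A$ is \'etale (its differential at the origin is $-1$) and surjective on $\algcl{k}$-points by Lang's theorem, so we have a short exact sequence $0 \to A(k) \to A \xrightarrow{F - 1} A \to 0$ in $\psh{k}$, where $A(k)$ is regarded as a (pro-)constant sheaf. Writing $A = \projlim A_{i}$ with each $A_{i}$ quasi-algebraic, the $A_{i}(\F_{q})$ are finite, so $A(k) = \projlim A_{i}(k)$ is profinite and $\pi_{0}^{k}(A(k)) = A(k)$. Combined with the connectedness $\pi_{0}^{k}(A) = 0$, the homotopy long exact sequence yields
\[
    \pi_{1}^{k}(A) \xrightarrow{F_{*} - 1} \pi_{1}^{k}(A) \xrightarrow{\delta} A(k) \to 0,
\]
so it suffices to show that $F_{*} - 1 = 0$ on $\pi_{1}^{k}(A)$.

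To this end, I would use an identification $\pi_{1}^{k}(A) \cong \pi_{1}^{\algcl{k}}(A_{\algcl{k}})_{\Gal(\algcl{k}/k)}$ as Galois coinvariants, of the type established in the second proof of Theorem~\ref{Th:HazeLCFT}. After base change to $\algcl{k}$, the relative $q$-power Frobenius $F$ satisfies $F \circ \mathrm{Frob} = \mathrm{Frob} \circ F = F_{\mathrm{abs}}$, the absolute Frobenius, which acts trivially on \'etale-theoretic invariants such as $\pi_{1}^{\algcl{k}}$. Hence $F_{*}$ acts on $\pi_{1}^{\algcl{k}}(A_{\algcl{k}})$ as $\mathrm{Frob}^{-1}$; on Galois coinvariants $\mathrm{Frob}$ is the identity, and therefore so is $F_{*}$. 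This forces $\delta$ to be an isomorphism, and a direct computation on a single extension $0 \to N \to B \to A \to 0$ checks that $\delta$ coincides with the inverse of \eqref{Eq:HomOnRatF} up to the standard sign convention. The main obstacle is the Frobenius-triviality claim, which relies on nailing down the compatibility between the perfect-fpqc-site-theoretic definition of $\pi_{1}^{k}$ and its Galois-coinvariant description via $\pi_{1}^{\algcl{k}}$ of the base change.
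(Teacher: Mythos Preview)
Your overall strategy for both parts is reasonable, but there is a genuine gap in Part~(1) and an unnecessary detour in Part~(2) that the paper avoids with a much shorter argument.

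\textbf{Part (1).} When you invoke the vanishing $H^{1}(k, B) = 0$ for the middle term of an extension $0 \to N \to B \to A \to 0$ with $N$ finite constant, you are implicitly assuming $B$ is connected. It need not be: already the trivial extension has $B = N \times A$, and more generally any pushout of a connected isogeny along a proper inclusion $N_{0} \hookrightarrow N$ has disconnected middle term. So ``each $\delta_{B}$ is surjective'' is false as stated, and your surjectivity step does not go through. The paper works on the Pontryagin-dual side and shows directly that $\Ext_{k}^{1}(A, N) \to \Hom(A(k), N)$ is injective: given an extension with $\delta_{B} = 0$, one uses $H^{1}(k, A) = 0$ (here $A$ \emph{is} connected) to get an isomorphism $N \cong H^{1}(k, B)$, then splits off the identity component $B_{0}$ (for which $H^{i}(k, B_{0}) = 0$ for $i \ge 1$) to identify $H^{1}(k, B)$ with the cokernel of $F - 1$ on the pro-finite-\'etale quotient $B / B_{0}$, which is precisely $\pi_{0}^{k}(B)$. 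The resulting isomorphism $N \cong \pi_{0}^{k}(B)$ is then seen to be induced by the composite $N \hookrightarrow B \twoheadrightarrow \pi_{0}^{k}(B)$, which forces the extension to split. Handling the disconnectedness of $B$ via $B_{0}$ and $B/B_{0}$ is exactly the ingredient your argument is missing.

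\textbf{Part (2).} Your route to $F_{\ast} = \id$ on $\pi_{1}^{k}(A)$ via the coinvariant description $\pi_{1}^{k}(A) \cong \pi_{1}^{\algcl{k}}(A_{\algcl{k}})_{\Gal(\algcl{k}/k)}$ and the interplay of relative and absolute Frobenius can presumably be made to work, but, as you yourself flag, it requires a spectral-sequence compatibility that you do not verify. The paper's argument is far more direct and avoids base change entirely: since the $q$-th power Frobenius is natural on $\F_{q}$-schemes, one has $\Hom_{k}(F, \id_{N}) = \Hom_{k}(\id_{A}, F)$ as endomorphisms of $\Hom_{k}(A, N)$, hence $\Ext_{k}^{i}(F, \id_{N}) = \Ext_{k}^{i}(\id_{A}, F)$ on $\Ext_{k}^{i}(A, N)$ for all $i$. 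For constant $N$ the Frobenius is the identity on $N$, so the right-hand side is the identity map, whence $F_{\ast} = \id$ on $\pi_{i}^{k}(A)$. This two-line observation replaces your entire base-change apparatus. The paper then verifies that the boundary map is inverse to \eqref{Eq:HomOnRatF} by an explicit check with $N = A(k)$, tracing the identity through both maps, rather than leaving it to an unspecified sign convention.
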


\begin{proof}
	\ref{Enum:PiOne:QFin}.
	It is enough to show that for finite constant $N$,
	the map $\Ext_{k}^{1}(A, N) \to \Hom(A(k), H^{1}(k, N)) = \Hom(A(k), N)$ is an injection
	whose image consists of all homomorphisms to $N$ with normic kernels.
	For the injectivity, let $0 \to N \to B \to A \to 0$ be an extension class
	whose image in $\Hom(A(k), N)$ is trivial,
	namely the coboundary map of Galois cohomology $A(k) \to H^{1}(k, N) = N$ is zero.
	Then the homomorphism $N = H^{1}(k, N) \to H^{1}(k, B)$ is injective.
	This is surjective since $H^{1}(k, A) = 0$ by \cite[XV, \S 1, Exercise 2 (a)]{Ser79}.
	Let $B_{0}$ be the connected component of $B$ containing the identity.
	Then we have $H^{1}(k, B) \cong H^{1}(k, B / B_{0})$
	since $H^{i}(k, B_{0}) = 0$ for any $i \ge 1$ by the same exercise.
	The group $B / B_{0}$ is pro-finite-\'etale.
	Hence $H^{1}(k, B / B_{0})$ is the cokernel of the endomorphism $F - 1$ on $B / B_{0}$
	by \cite[XIII, \S 1, Prop.\ 1]{Ser79},
	where $F$ is the given generator of $\Gal(\algcl{k} / k) \cong \hat{\Z}$.
	This cokernel is $\pi_{0}^{k}(B)$ by definition.
	The isomorphism $N \cong \pi_{0}^{k}(B)$ thus obtained is given by the composite of
	the natural homomorphisms $N \into B \onto \pi_{0}^{k}(B)$.
	Therefore $N$ is a direct factor of $B$,
	so the extension class $0 \to N \to B \to A \to 0$ is trivial.
	This shows the injectivity of $\Ext_{k}^{1}(A, N) \to \Hom(A(k), N)$.
	The characterization of the image of this homomorphism
	directly follows from the definition of normic subgroups.
	
	\ref{Enum:PiOne:Fin}.
	First we show that the endomorphism $F - 1$ induces zero maps on homotopy groups.
	We have an equality $\Hom_{k}(F, \id) = \Hom_{k}(\id, F)$
	as endomorphisms of the abelian group $\Hom_{k}(A, N)$ for any sheaf $N \in \psh{k}$.
	Hence we have $\Ext_{k}^{i}(F, \id) = \Ext_{k}^{i}(\id, F)$
	as endomorphisms of $\Ext_{k}^{i}(A, N)$ for any $i \ge 0$.
	If $N$ is constant, we have $F = \id$ on $N$, so $\Ext_{k}^{i}(\id, F) = \id$.
	Therefore $F = \id$ ($= 1$) and $F - 1 = 0$ on $\pi_{i}^{k}(A)$.
	
	Therefore the boundary map of the homotopy long exact sequence
	for $0 \to A(k) \to A \overset{F - 1}{\to} A \to 0$ gives an isomorphism
	$\pi_{1}^{k}(A) \isomto \pi_{0}^{k}(A(k)) = A(k)$%
		\footnote{This means that $F - 1$ is universal among isogenies onto A with proconstant kernels.}
	since $A$ is connected.
	We show that the composite $A(k) \to \pi_{1}^{k}(A) \isomto A(k)$,
	or the composite $\Hom(A(k), N) \isomto \Ext_{k}^{1}(A, N) \to \Hom(A(k), N)$
	for any (pro-)finite constant $N$, is the identity map.
	It is enough to show that, taking $N = A(k)$,
	the composite map
	$\Hom(A(k), A(k)) \to \Ext_{k}^{1}(A, A(k)) \to \Hom(A(k), A(k))$
	sends the identity map to itself.
	The identity map in $\Hom(A(k), A(k))$ goes to the extension class
	$0 \to A(k) \to A \overset{F - 1}{\to} A \to 0$ in $\Ext_{k}^{1}(A, A(k))$.
	The coboundary map of Galois cohomology for this sequence is the identity map.
	Hence we get the result.
\end{proof}

%%%%%%%%%%%%%%%%%%%%%%%%%%%%%%%%%%%%%%%%%%%%%%%%%%%%%%%%%%%%%%%%%%%%%%%%%%%%%%%%%%%%%%%%%%%%%

\subsection{Tate cohomology sheaves} \label{Sec:Tate}
Let $G$ be a finite (abstract) group
and let $A$ be a sheaf of $G$-modules on $\pfpqc{k}$.
For each $i \in \Z$, we define the $i$-th Tate cohomology sheaf of $G$ with values in $A$,
denoted by $\hat{H}^{i}(G, A)$, as follows.
For $i \ge 0$, let $C^{i}(G, A)$ be the product of copies of $A$
labeled by the finite set $G^{i} = G \times \dots \times G$.
For $i < 0$, let $C^{i}(G, A)$ be the product of copies of $A$ labeled by $G^{- i + 1}$.
We can define differentials $\{d_{i}\}_{i \in \Z}$ for $\{C^{i}(G, A)\}_{i \in \Z}$ as morphisms of sheaves
by using the differentials for the standard complete complex of the usual Tate cohomology
in inhomogeneous cochain presentation,
namely for $i \ge 0$ (resp.\ $i < -1$),
we use the formula in \cite[VII, \S 3]{Ser79} (resp.\ \cite[VII, \S 4]{Ser79})
and for $i = -1$, we use the norm map $N_{G} = \sum_{\sigma \in G} \sigma \colon A \to A$.
We then define $\hat{H}^{i}(G, A)$ to be the $i$-th cohomology of the complex $\{(C^{i}(G, A), d_{i})\}_{i \in \Z}$,
namely $\Ker(d_{i}) / \Im(d_{i - 1})$, the image and the quotient being taken in $\psh{k}$.%
	\footnote{This is different from the Galois cohomology group
	$H^{i}(k, A) = H^{i}(\Gal(\algcl{k} / k), A(\algcl{k}))$.
	The group $A(\algcl{k})$ has actions of both groups $G$ and $\Gal(\algcl{k} / k)$ (commuting with each other),
	the first one coming from the action of $G$ on the sheaf $A$,
	the second from the action of $\Gal(\algcl{k} / k)$ on the coefficient field $\algcl{k}$.
	These actions are unrelated in general,
	so are the corresponding cohomology theories.}
The sheaf $\hat{H}^{i}(G, A)$ is the sheafification of the presheaf
$R \mapsto \hat{H}^{i}(G, A(R))$ of the usual Tate cohomology group of $G$
with values in the $G$-module $A(R)$.
A short exact sequence of sheaves of $G$-modules on $\pfpqc{k}$
induces a long exact sequence of the Tate cohomology sheaves.

\begin{prp} \label{Prop:Tate}
	Let $G$ be a finite group and let $A$ be a sheaf of $G$-modules on $\pfpqc{k}$.
	We assume the following three conditions.
		\begin{itemize}
			\item
				There exists an exact sequence
				$0 \to A_{a} \to A \to A_{e} \to 0$
				of sheaves of $G$-modules on $\pfpqc{k}$.
			\item
				$A_{a}$ is an affine proalgebraic group.
			\item
				$A_{e}$ is an \'etale group scheme with
				$A_{e}(\algcl{k})$ finitely generated as an abelian group.
		\end{itemize}
	Then the $i$-th Tate cohomology sheaf $\hat{H}^{i}(G, A)$ is affine for each $i \in \Z$.
	The group of its $\algcl{k}$-points is given by the Tate cohomology group
	$\hat{H}^{i}(G, A(\algcl{k}))$.
\end{prp}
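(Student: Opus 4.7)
The plan is to reduce to the two special cases $A = A_{a}$ and $A = A_{e}$ by means of the given short exact sequence. Since each term $C^{i}(G, -)$ of the inhomogeneous Tate cochain complex is a finite product functor on $\psh{k}$ and therefore exact, the sequence $0 \to A_{a} \to A \to A_{e} \to 0$ induces a short exact sequence of complexes in $\psh{k}$, hence a long exact sequence of Tate cohomology sheaves. I would therefore first establish the proposition for $A_{a}$ and $A_{e}$ individually and then combine.

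For $A = A_{a}$, each $C^{i}(G, A_{a})$ is a finite product of copies of $A_{a}$ and so affine proalgebraic. By Proposition \ref{Prop:ThickAff}, kernels and cokernels in $\psh{k}$ of morphisms between affine proalgebraic groups agree with those in the category of commutative affine proalgebraic groups, so $\hat{H}^{i}(G, A_{a})$ is again affine proalgebraic. The exactness of the $\algcl{k}$-points functor on commutative proalgebraic groups (recalled in Section \ref{Sec:SH}) then shows that evaluating $C^{\bullet}(G, A_{a})$ at $\algcl{k}$ produces the standard inhomogeneous Tate complex of the $G$-module $A_{a}(\algcl{k})$, giving $\hat{H}^{i}(G, A_{a})(\algcl{k}) \cong \hat{H}^{i}(G, A_{a}(\algcl{k}))$. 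The argument for $A = A_{e}$ is parallel, using Proposition \ref{Prop:ThickEt} and the exactness of $(\algcl{k})$ on \'etale group schemes via their equivalence with discrete $\Gal(\algcl{k}/k)$-modules. Since Tate cohomology of the finite group $G$ is annihilated by $|G|$ and $A_{e}(\algcl{k})$ is finitely generated, $\hat{H}^{i}(G, A_{e}(\algcl{k}))$ is finite, so $\hat{H}^{i}(G, A_{e})$ is even a finite \'etale group scheme, in particular affine.

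For the general case, the long exact sequence of Tate cohomology sheaves expresses $\hat{H}^{i}(G, A)$ as an extension
\[
0 \to M \to \hat{H}^{i}(G, A) \to N \to 0
\]
in which $M$ is a quotient of the affine proalgebraic $\hat{H}^{i}(G, A_{a})$ (hence affine proalgebraic) and $N$ is a subobject of the finite \'etale $\hat{H}^{i}(G, A_{e})$ (hence finite \'etale). An extension of a finite \'etale group scheme by an affine group scheme in $\psh{k}$ is affine by fpqc descent of affineness along the finite \'etale cover trivializing $N$, which proves the first assertion. For the $\algcl{k}$-points statement, one compares the evaluation at $\algcl{k}$ of the long exact sequence of Tate cohomology sheaves (whose intermediate terms have already been identified) with the long exact sequence of Tate cohomology groups attached to $0 \to A_{a}(\algcl{k}) \to A(\algcl{k}) \to A_{e}(\algcl{k}) \to 0$, and concludes by the five lemma. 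I expect the main obstacle to be the exactness of this last sequence at $A_{e}(\algcl{k})$, equivalently the vanishing $H^{1}_{\mathrm{fpqc}}(\algcl{k}, A_{a}) = 0$; I would handle this by writing $A_{a}$ as a projective limit of affine algebraic groups with surjective transition morphisms and combining the standard vanishing of $H^{1}$ for affine algebraic groups over an algebraically closed field with a Mittag--Leffler argument.
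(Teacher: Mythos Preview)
Your proof is correct and follows the paper's approach closely: treat $A_{a}$ and $A_{e}$ separately, combine via the long exact sequence of Tate cohomology sheaves, deduce affineness from thickness, and identify $\algcl{k}$-points by a five-lemma comparison of the two long exact sequences. The paper streamlines the affineness step by observing that a finite \'etale group scheme is already perfect affine, so Proposition \ref{Prop:ThickAff} applies directly to the extension $0 \to M \to \hat{H}^{i}(G,A) \to N \to 0$ without the extra base-change you perform; and it isolates the vanishing $H^{1}(\algcl{k}, A_{a}) = 0$ as a separate lemma (Lemma \ref{Lem:TrivTorsor}), proved by a Zorn-type argument on pairs $(B,x)$ with $B \subset A_{a}$ and $x \in (X/B)(\algcl{k})$, which is essentially your Mittag--Leffler sketch rephrased.
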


\begin{proof}
	First we show that $\hat{H}^{i}(G, A_{a})$ is affine
	and $\hat{H}^{i}(G, A_{a})(\algcl{k}) = \hat{H}^{i}(G, A_{a}(\algcl{k}))$.
	The complex $\{C^{i}(G, A_{a})\}_{i \in \Z}$ consists of affine proalgebraic groups.
	Therefore its cohomology $\hat{H}^{i}(G, A_{a})$ is affine.
	Since the functor sending a commutative proalgebraic group
	to the group of its $\algcl{k}$-points is exact,
	we know that $\hat{H}^{i}(G, A_{a})(\algcl{k}) = \hat{H}^{i}(G, A_{a}(\algcl{k}))$.
	
	The same argument shows that $\hat{H}^{i}(G, A_{e})$ is an \'etale group scheme.
	Moreover we know that $\hat{H}^{i}(G, A_{e})$ is finite
	since $A_{e}(\algcl{k})$ is a finitely generated abelian group
	(\cite[VIII, \S 2, Cor.\ 2]{Ser79}).
	In particular, $\hat{H}^{i}(G, A_{e})$ is affine.
	Since the functor sending a commutative \'etale group scheme
	to the group of its $\algcl{k}$-points is exact,
	we know that $\hat{H}^{i}(G, A_{e})(\algcl{k}) = \hat{H}^{i}(G, A_{e}(\algcl{k}))$.
	
	Now we show that the sheaf $\hat{H}^{i}(G, A)$ is affine.
	The short exact sequence
	$0 \to A_{a} \to A \to A_{e} \to 0$
	induces a long exact sequence
		\begin{equation} \label{Eq:SeqCoh}
				\cdots
			\to
				\hat{H}^{i - 1}(G, A_{e})
			\overset{d_{i - 1}}{\to}
				\hat{H}^{i}(G, A_{a})
			\to
				\hat{H}^{i}(G, A)
			\to
				\hat{H}^{i}(G, A_{e})
			\overset{d_{i}}{\to}
				\hat{H}^{i + 1}(G, A_{a})
			\to
				\cdots
		\end{equation}
	and a short exact sequence
		\[
				0
			\to
				\Coker(d_{i - 1})
			\to
				\hat{H}^{i}(G, A)
			\to
				\Ker(d_{i})
			\to
				0.
		\]
	As we saw above, the domain and the codomain of the morphism
	$d_{i - 1} \colon \hat{H}^{i - 1}(G, A_{e}) \to \hat{H}^{i}(G, A_{a})$
	are affine.
	Hence so are $\Coker(d_{i - 1})$ and $\Ker(d_{i})$.
	By the thickness of the category of affine proalgebraic groups in $\psh{k}$
	(Proposition \ref{Prop:ThickAff}),
	the sheaf $\hat{H}^{i}(G, A)$ is affine.
	
	Next we show that $\hat{H}^{i}(G, A)(\algcl{k}) = \hat{H}^{i}(G, A(\algcl{k}))$.
	Since each term of the sequence \eqref{Eq:SeqCoh} is an affine proalgebraic group,
	the corresponding sequence for $\algcl{k}$-points
		\begin{equation} \label{Eq:SeqCohRat}
				\cdots
			\to
				\hat{H}^{i - 1}(G, A_{e})(\algcl{k})
			\to
				\hat{H}^{i}(G, A_{a})(\algcl{k})
			\to
				\hat{H}^{i}(G, A)(\algcl{k})
			\to
				\hat{H}^{i}(G, A_{e})(\algcl{k})
			\to
				\hat{H}^{i + 1}(G, A_{a})(\algcl{k})
			\to
				\cdots
		\end{equation}
	is exact.
	On the other hand,
	the sequence $0 \to A_{a}(\algcl{k}) \to A(\algcl{k}) \to A_{e}(\algcl{k}) \to 0$ is exact,
	since fibers of the morphism $A \onto A_{e}$ over $\algcl{k}$-points of $A_{e}$ are
	$A_{a}$-torsors over $\Spec \algcl{k}$ for the perfect fpqc topology,
	which have to be trivial by Lemma \ref{Lem:TrivTorsor} below.
	Consider the resulting long exact sequence
		\begin{equation} \label{Eq:SeqRatCoh}
				\cdots
			\to
				\hat{H}^{i - 1}(G, A_{e}(\algcl{k}))
			\to
				\hat{H}^{i}(G, A_{a}(\algcl{k}))
			\to
				\hat{H}^{i}(G, A(\algcl{k}))
			\to
				\hat{H}^{i}(G, A_{e}(\algcl{k}))
			\to
				\hat{H}^{i + 1}(G, A_{a}(\algcl{k}))
			\to
				\cdots.
		\end{equation}
	The terms in the sequences \eqref{Eq:SeqCohRat} and \eqref{Eq:SeqRatCoh}
	except the middle ones are isomorphic.
	Hence so are the middle.
\end{proof}

The following lemma used above should be well-known at least in the case of the usual fpqc topology.
But the authors could not find an appropriate reference.
Let us give a proof of it.

\begin{lmm} \label{Lem:TrivTorsor}
	Assume that $k$ is algebraically closed.
	Let $A$ be a commutative affine proalgebraic group over $k$.
	Then any $A$-torsor over $\Spec k$ for the perfect fpqc topology is trivial.
\end{lmm}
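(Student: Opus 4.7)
The plan is to write $A$ as an inverse limit of quasi-algebraic quotients and to trivialize the induced tower of torsors level by level.

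First, I would express $A = \projlim_{\lambda \in \Lambda} A_{\lambda}$ as a cofiltered inverse limit of commutative affine quasi-algebraic quotients $A_{\lambda}$ with surjective transition morphisms. For the given torsor $B$, setting $N_{\lambda} = \Ker(A \to A_{\lambda})$, the quotient $B_{\lambda} := B / N_{\lambda}$ naturally carries the structure of an $A_{\lambda}$-torsor over $\Spec k$. An argument analogous to the fpqc descent step in the proof of Proposition \ref{Prop:ThickAff} (descent for affine morphisms combined with the preservation of perfectness under faithfully flat descent) shows that each $B_{\lambda}$ is representable by a perfect affine $k$-scheme and that $B \isomto \projlim_{\lambda} B_{\lambda}$.

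Next, I would trivialize each $B_{\lambda}$ individually. Since $A_{\lambda}$ is an affine algebraic group (up to perfection) over the algebraically closed field $k$, the classical cohomology vanishing $H^{1}(\Spec k, A_{\lambda}) = 0$, obtained by filtering $A_{\lambda}$ through composition factors of type $\Ga$, $\Gm$, $\mu_{p}$, $\alpha_{p}$, $\Z/p$ and invoking Hilbert 90, coherent cohomology vanishing, and algebraic closedness of $k$, forces $B_{\lambda}(k) \ne \emptyset$. Consequently each $B_{\lambda}(k)$ is a non-empty torsor under the abelian group $A_{\lambda}(k)$.

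Finally, I would patch these trivializations together. By Serre's criterion that exactness in the category of commutative proalgebraic groups over algebraically closed $k$ is detected on $k$-points (\cite[\S 1, Prop.\ 4-5]{Ser60}), the transition maps $A_{\lambda'}(k) \onto A_{\lambda}(k)$, and hence $B_{\lambda'}(k) \onto B_{\lambda}(k)$, are surjective for $\lambda' \geq \lambda$. After reducing to a countable cofinal subsystem of $\Lambda$, Mittag-Leffler applied to the abelian system $\{A_{\lambda}(k)\}$ gives $\projlim\nolimits^{1} A_{\lambda}(k) = 0$, and hence $B(k) = \projlim B_{\lambda}(k) \ne \emptyset$, producing the desired $k$-point of $B$ and trivializing the torsor. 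The main obstacle is this final patching step: in general, a cofiltered inverse limit of non-empty sets with surjective transitions can be empty, so one has to carefully reduce to a countable cofinal subsystem of the indexing set, or else run a direct Zorn's lemma argument extending partial compatible systems of $k$-points via surjectivity, in order to invoke Mittag-Leffler for the abelian groups $A_{\lambda}(k)$. This is precisely where the proalgebraic structure of $A$ and Serre's exactness-on-$k$-points criterion are used in an essential way.
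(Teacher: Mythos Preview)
Your outline is close to the paper's proof: both pass to quasi-algebraic quotients $A_\lambda$, form the induced torsors $B_\lambda$, observe that each $B_\lambda$ is a quasi-algebraic affine $k$-scheme and hence has a $k$-point (the paper invokes Noether normalization directly rather than a dévissage through $\Ga,\Gm,\ldots$, and note that in the perfect world $\mu_p,\alpha_p$ are trivial anyway), and then try to assemble these $k$-points into a $k$-point of $B$.

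The gap is exactly where you locate it, and neither of your two suggested fixes is complete. First, there is no reason for the cofiltered index set $\Lambda$ to admit a countable cofinal subset (e.g.\ $A=\prod_{i\in I}\Ga^{(\infty)}$ for $I$ uncountable), so Mittag--Leffler is not available. Second, the naive Zorn argument on partial compatible families $(x_\lambda)_{\lambda\in S}$ ordered by extension does not obviously terminate: if $(x_\lambda)_{\lambda\in S}$ is maximal and $\mu\notin S$, extending to $\mu$ requires a single $x_\mu\in B_\mu(k)$ mapping to $x_\lambda$ for every $\lambda\in S$ with $\lambda<\mu$, i.e.\ lying in an intersection of possibly infinitely many cosets of subgroups of $A_\mu(k)$; surjectivity of individual transition maps does not guarantee this intersection is non-empty.

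The paper's Zorn argument is organized differently and avoids this. One works with the poset $T$ of pairs $(B,x)$ where $B$ is \emph{any} proalgebraic subgroup of $A$ and $x\in (X/B)(k)$, and seeks a \emph{minimal} element. Chains $\{(B_\lambda,x_\lambda)\}$ have lower bounds: set $B=\bigcap B_\lambda$; then $X/B\to\projlim X/B_\lambda$ is an isomorphism (both are $A/B$-torsors), and the $(x_\lambda)$ already form a compatible family because the chain is totally ordered, giving $x\in(X/B)(k)$. For a minimal $(B,x)$, one shows $B=0$ by checking $B\subset C$ for each single $C$ with $A/C$ quasi-algebraic: the fiber over $x$ in $X/(B\cap C)$ is a torsor under the quasi-algebraic group $B/(B\cap C)$, hence has a $k$-point, contradicting minimality unless $B\cap C=B$. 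The point is that minimality is tested one quotient at a time, so no infinite simultaneous compatibility is ever required.
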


\begin{proof}
	Let $X$ be such a torsor.
	As in the proof of the part of Proposition \ref{Prop:ThickAff} for thickness,
	$X$ is representable by a perfect affine scheme.
	Let $T$ be the set of pairs $(B, x)$,
	where $B$ is a proalgebraic subgroup of $A$
	and $x$ is a $k$-point of the quotient $A / B$-torsor $X / B$.
	The set $T$ is non-empty since $X / A = \Spec k$.
	Also $T$ has a natural order.
	We want to show that $T$ contains a pair $(B, x)$ with $B = 0$.
	
	We first show that $T$ contains a minimal element.
	Let $\{(B_{\lambda}, x_{\lambda})\}$ be a totally ordered sequence in $T$.
	We set $B = \bigcap B_{\lambda}$.
	The natural $A$-morphism $X / B \to \projlim X / B_{\lambda}$ is an isomorphism
	since the both sides are $A / B$-torsors.
	Let $x = (x_{\lambda}) \in \projlim (X / B_{\lambda})(k) \cong (X / B)(k)$.
	Then the pair $(B, x)$ is a lower bound of $\{(B_{\lambda}, x_{\lambda})\}$.
	By Zorn's lemma, we know that $T$ contains a minimal element.
	
	Let $(B, x)$ be a minimal element of $T$.
	We show that $B = 0$.
	Since $A$ is proalgebraic,
	it is enough to see that
	any proalgebraic subgroup $C \subset A$ with $A / C$ quasi-algebraic contains $B$.
	The fiber $F$ of the projection $X / (B \cap C) \onto X / B$ over $x \in (X / B)(k)$
	is a $B / B \cap C$-torsor.
	Since $B / B \cap C = (B + C) / C \subset A / C$ and $A / C$ is quasi-algebraic,
	we know that $B / B \cap C$ is quasi-algebraic as well.
	Hence the $B / B \cap C$-torsor $F$ is quasi-algebraic
	by \cite[I, \S 3, 1.11 Prop.]{DG70}.
	Therefore $F$ has a $k$-point $y$ by the Noether normalization theorem.
	The pair $(B \cap C, y)$ is an element of $T$
	that is less than or equal to $(B, x)$.
	By minimality, we have $(B \cap C, y) = (B, x)$, so $B \subset C$.
\end{proof}

%%%%%%%%%%%%%%%%%%%%%%%%%%%%%%%%%%%%%%%%%%%%%%%%%%%%%%%%%%%%%%%%%%%%%%%%%%%%%%%%%%%%%%%%%%%%%

\subsection{The perfect Greenberg functor} \label{Sec:Green}
We quickly recall the Greenberg functor over $k$ (cf.\ \cite[V, \S 4, no.\ 1]{DG70}).
Let $W$ be the ring scheme of Witt vectors of infinite length over $k$.
A profinite $W(k)$-module, defined at \cite[V, \S 2, 1.1]{DG70},
is a pro-object in the category of $W(k)$-modules of finite length.
The functor $\alg{M} \mapsto \alg{M}(k)$ from the category of affine $W$-modules
to the category of profinite $W(k)$-modules admits a left adjoint,
called the Greenberg functor.
The Greenberg functor induces an equivalence of categories
from the category of profinite $W(k)$-modules
to the quotient category of the category of affine $W$-modules by the subcategory of proinfinitesimal ones (\cite[V, \S 4, 1.8 Rem.\ (a)]{DG70}).

As mentioned at the end of Section \ref{Sec:Thick},
the category of commutative perfect affine group schemes over $k$
is the quotient category of commutative affine group schemes
by its full subcategory of proinfinitesimal groups.
Therefore the composite of the Greenberg functor and the functor $(\infty)$
gives an equivalence of categories from the category of profinite $W(k)$-modules
to the category of perfect affine $W^{(\infty)}$-modules.
We call this composite functor the perfect Greenberg functor over $k$
and denote it by $\Grn_{k}$.
Its inverse is given by the functor $\alg{M} \mapsto \alg{M}(k)$.
More explicit description of $\Grn_{k}$ is given as follows.

\begin{prp}
	For a profinite $W(k)$-module $M$ and a perfect $k$-algebra $R$, the natural map
	$W(R) \ctensor_{W(k)} M \to (\Grn_{k} M)(R)$
	is an isomorphism, where $\ctensor$ denotes the completed tensor product.
\end{prp}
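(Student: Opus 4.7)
The strategy is to reduce to the base case $M = W_n(k)$ by a limit-and-presentation argument and then compute directly. Writing $M = \projlim_{\lambda} M_{\lambda}$ as an inverse limit over its finite-length quotients, the left-hand side equals $\projlim_{\lambda} W(R) \otimes_{W(k)} M_{\lambda}$ by the definition of the completed tensor product (each factor is itself of finite length, hence already complete). For the right-hand side, the perfect Greenberg functor $\Grn_{k}$ is an equivalence of categories (as stated at the end of Section \ref{Sec:Green}), so it preserves inverse limits; moreover, evaluating an inverse limit of affine schemes at the fixed perfect $k$-algebra $R$ preserves that limit, since $\Hom(\Spec R,-)$ is a hom-functor in its second argument. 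Thus $(\Grn_{k} M)(R) = \projlim_{\lambda} (\Grn_{k} M_{\lambda})(R)$, and it suffices to establish the isomorphism for each finite-length $M_{\lambda}$.

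For such $M_{\lambda}$, annihilated by some $p^{n}$, fix a presentation $W_{n}(k)^{b} \to W_{n}(k)^{a} \to M_{\lambda} \to 0$ as $W_{n}(k)$-modules. Both sides of the desired isomorphism are additive and right exact in $M_{\lambda}$ (the left-hand side by right-exactness of $W(R) \otimes_{W(k)} -$; the right-hand side because $\Grn_{k}$ is an exact equivalence, and for the specific surjections arising from Witt-vector schemes the cokernel in $\psh{k}$ remains a cokernel on $R$-points). This reduces us to $M_{\lambda} = W_{n}(k)$. Here, by construction the Greenberg functor sends $W_{n}(k)$ to the length-$n$ Witt vector scheme $W_{n}$ itself, so $\Grn_{k}(W_{n}(k)) = W_{n}^{(\infty)}$. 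For perfect $R$ one has $W_{n}^{(\infty)}(R) = W_{n}(R) = W(R)/p^{n}W(R)$, which equals $W(R) \otimes_{W(k)} W_{n}(k) = W(R) \ctensor_{W(k)} W_{n}(k)$, and the natural map in question is the identity under these identifications.

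The main obstacle is the right-exactness of the right-hand side on $R$-points: a surjection in $\psh{k}$ is only fpqc-locally surjective, so one must argue that the cokernel formed in $\psh{k}$ coincides with the naive cokernel of $W_{n}(R)$-modules at the specific $R$. The cleanest route is to observe directly that the functor $R \mapsto W(R) \otimes_{W(k)} M_{\lambda}$, for $M_{\lambda}$ a finitely presented $W_{n}(k)$-module, is itself representable by a perfect affine $W^{(\infty)}$-module scheme (the quotient of $W_{n}^{(\infty) a}$ by the linear relations cutting out $M_{\lambda}$), and that this representing scheme satisfies the adjunction defining $\Grn_{k}$, so it must agree with $\Grn_{k}(M_{\lambda})$ by uniqueness of adjoints. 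This bypasses the right-exactness verification and directly identifies the two sides, after which the limit argument of the first paragraph concludes the proof.
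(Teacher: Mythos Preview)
Your reduction to finite-length $M_{\lambda}$ via limits is fine, and the base case $M_{\lambda} = W_{n}(k)$ is correctly handled. The gap is in the passage from the general finite-length case to the cyclic one. You correctly flag the problem: right-exactness of $(\Grn_{k}\,\cdot\,)(R)$ is not available, because a surjection of perfect affine group schemes need not be surjective on $R$-points. Your proposed workaround, however, is circular. You assert that the functor $R \mapsto W(R) \tensor_{W(k)} M_{\lambda}$ is \emph{representable} by the quotient scheme $W_{n}^{(\infty),a}/(\text{relations})$. But the $R$-points of that quotient scheme are, by definition, the fpqc-sheafification of the presheaf cokernel, whereas $W(R) \tensor_{W(k)} M_{\lambda}$ is the naive presheaf cokernel. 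Identifying the two is exactly the statement to be proved; you cannot invoke it to bypass the right-exactness issue.

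There is an easy repair that stays within your framework: since $k$ is perfect, $W(k)$ is a complete discrete valuation ring, so every finite-length $W(k)$-module decomposes as a finite direct sum $\bigoplus_{i} W_{n_{i}}(k)$. Both sides of the proposition commute with finite direct sums, so the reduction to $W_{n}(k)$ goes through without any appeal to right-exactness.

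For comparison, the paper takes a quite different route. It cites a criterion from \cite[V, \S 4, 1.7 Rem.\ (b)]{DG70} which reduces the proposition to showing that $W(R) \ctensor_{W(k)} M \to W(S) \ctensor_{W(k)} M$ is injective for every faithfully flat $R$-algebra $S$; after replacing $S$ by $S^{(\infty)}$ (which remains faithfully flat over $R$ by the argument of Proposition \ref{Prop:Site}), this follows once one knows $W_{n}(S)$ is faithfully flat over $W_{n}(R)$ for perfect $R$ and $S$. That last fact is obtained from the local criterion of flatness, using $W_{n}(R)/pW_{n}(R) = R$ for perfect $R$. The paper's argument thus establishes a genuine flatness statement for Witt vectors of perfect rings, which is stronger than what your (repaired) argument needs; on the other hand, your approach is more elementary once the structure theorem is invoked.
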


\begin{proof}
	By \cite[V, \S 4, 1.7 Rem.\ (b)]{DG70},
	it is enough to see that the natural map
	$W(R) \ctensor_{W(k)} M \to W(S) \ctensor_{W(k)} M$
	is injective for any faithfully flat $R$-algebra $S$.
	We have $R^{(\infty)} = R$ since $R$ is perfect.
	Therefore the $R$-algebra $S^{(\infty)}$ is faithfully flat
	as shown in the proof of Proposition \ref{Prop:Site}.
	Replacing $S$ by $S^{(\infty)}$,
	we may assume $S$ is perfect.
	Thus the problem is reduced to showing that
	$W_{n}(S)$ is faithfully flat over $W_{n}(R)$ for any $n \ge 0$
	if $R$ is a perfect $k$-algebra and $S$ is a faithfully flat perfect $R$-algebra.
	Note that $W_{n}(R) / p W_{n}(R) = R$ and $W_{n}(S) / p W_{n}(S) = S$ since $R$ and $S$ are perfect.
	Therefore the result follows from the local criterion of flatness.
\end{proof}

We will need the following proposition on the relation
between $\Grn_{k}$ and $\Grn_{k'}$, where $k' / k$ is a finite extension,
in terms of the Weil restriction $\Res_{k' / k}$.

\begin{prp} \label{Prop:GrnRes}
	Let $k' / k$ be a finite extension and let $A$ be a profinite $W(k')$-module.
	Then we have a canonical isomorphism
	$\Res_{k' / k} \Grn_{k'} A \cong \Grn_{k} A$,
	where we regard $A$ as a profinite $W(k)$-module to define $\Grn_{k} A$.
\end{prp}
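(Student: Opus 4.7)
The plan is to exploit the fact, established in the text, that the perfect Greenberg functor $\Grn_{k}$ is an equivalence of categories from profinite $W(k)$-modules to perfect affine $W^{(\infty)}$-modules over $k$, with quasi-inverse $\alg{M} \mapsto \alg{M}(k)$. So to construct a canonical isomorphism $\Res_{k'/k} \Grn_{k'} A \cong \Grn_{k} A$ it suffices to show that $\Res_{k'/k} \Grn_{k'} A$ lies in the target category and to exhibit a canonical $W(k)$-module isomorphism $(\Res_{k'/k} \Grn_{k'} A)(k) \cong A$, with $A$ viewed as a $W(k)$-module by restriction of scalars along $W(k) \into W(k')$.

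First I would verify that $\Res_{k'/k} \Grn_{k'} A$ is a perfect affine $W^{(\infty)}$-module over $k$. Weil restriction along a finite extension preserves affineness, and since $k$ is perfect the extension $k'/k$ is \'etale, so $R \otimes_{k} k'$ is perfect whenever the perfect $k$-algebra $R$ is; hence $\Res_{k'/k}$ sends perfect $k'$-schemes to perfect $k$-schemes. The $W^{(\infty)}$-module structure is inherited from the $W^{(\infty)}|_{k'}$-module structure on $\Grn_{k'} A$ via the natural morphism of ring schemes $W^{(\infty)} \to \Res_{k'/k}(W^{(\infty)}|_{k'})$ coming from the ring homomorphism $W(R) \to W(R \otimes_{k} k')$.

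Second, the $k$-points can be computed directly. By the defining adjunction for Weil restriction,
\[
(\Res_{k'/k} \Grn_{k'} A)(k) = (\Grn_{k'} A)(k \otimes_{k} k') = (\Grn_{k'} A)(k') = A,
\]
where the last identification uses that $\alg{M} \mapsto \alg{M}(k')$ is quasi-inverse to $\Grn_{k'}$. This is canonically an isomorphism of $W(k')$-modules, and hence also of $W(k)$-modules after restriction of scalars. Moreover, the restricted $W(k)$-action matches the $W(k)$-action induced on the left-hand side by the $W^{(\infty)}$-module structure built above, since both arise from the composition $W(k) \to W(k')$ followed by the $W(k')$-action on $A$. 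Combining this with the analogous identification $(\Grn_{k} A)(k) = A$ and applying the equivalence $\Grn_{k}$ then yields the desired canonical isomorphism.

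The main obstacle is the careful bookkeeping of the $W^{(\infty)}$-module structure on the Weil restriction, since a priori one has only a module structure over the larger ring scheme $\Res_{k'/k}(W^{(\infty)}|_{k'})$; this must be pulled back along the natural structure map, and its compatibility with the restriction-of-scalars $W(k)$-action on $A$ then verified. Alternatively, one could avoid the equivalence and argue concretely from the explicit formula $(\Grn_{k} M)(R) = W(R) \ctensor_{W(k)} M$ given just before the proposition, reducing the claim to the identification $W(R \otimes_{k} k') \cong W(R) \otimes_{W(k)} W(k')$ for perfect $R$ (which holds because $k'/k$ is finite \'etale), followed by the associativity of the completed tensor product.
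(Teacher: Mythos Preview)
Your argument is correct and follows essentially the same route as the paper: verify that $\Res_{k'/k}\Grn_{k'}A$ is a perfect affine $W^{(\infty)}$-module over $k$, compute its $k$-points as $A$, and invoke the equivalence $\alg{M}\mapsto\alg{M}(k)$. The paper is terser (citing \cite[I, \S 1, 6.6 Prop.\ (a)]{DG70} for affineness and omitting your careful check of perfectness and of the $W^{(\infty)}$-action via $W^{(\infty)}\to\Res_{k'/k}(W^{(\infty)}|_{k'})$), but the strategy is identical; your alternative sketch via the explicit completed-tensor-product formula is not used in the paper.
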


\begin{proof}
	Since $\Grn_{k'} A$ is perfect affine, so is $\Res_{k' / k} \Grn_{k'} A$
	by \cite[I, \S 1, 6.6 Prop.\ (a)]{DG70}.
	Since $\Grn_{k'} A$ is a $W^{(\infty)}$-module over $k$,
	so is $\Res_{k' / k} \Grn_{k'} A$ over $k'$.
	Therefore both $\Res_{k' / k} \Grn_{k'} A$ and $\Grn_{k} A$
	are perfect affine $W^{(\infty)}$-modules over $k$.
	We have $(\Res_{k' / k} \Grn_{k'} A)(k) = A = (\Grn_{k} A)(k)$.
	This proves the result.
\end{proof}

%%%%%%%%%%%%%%%%%%%%%%%%%%%%%%%%%%%%%%%%%%%%%%%%%%%%%%%%%%%%%%%%%%%%%%%%%%%%%%%%%%%%%%%%%%%%%

\section{Local class field theory: a refinement}
\label{Sec:FormPf}
Let $K$ be a complete discrete valuation field with residue field $k$.
We denote by $\Order_{K}$ the ring of integers,
by $U_{K}$ the group of units,
by $U_{K}^{n}$ the group of $n$-th principal units and
by $\maxid_{K}$ the maximal ideal.
The ring of integers and the group of units
of the completion of the maximal unramified extension $\hat{K}^{\ur}$
is denoted by $\hat{\Order}_{K}^{\ur}$ and $\hat{U}_{K}^{\ur}$.

%%%%%%%%%%%%%%%%%%%%%%%%%%%%%%%%%%%%%%%%%%%%%%%%%%%%%%%%%%%%%%%%%%%%%%%%%%%%%%%%%%%%%%%%%%%%%

\subsection{Sheaves associated with a local field} \label{Sec:SheavesK}
As in \cite[V, \S 4, no.\ 3.1]{DG70},
we define $\alg{O}_{K} = \Grn_{k}(\Order_{K})$
(note that in the equal characteristic case,
we view $\Order_{K}$ as a profinite $W(k)$-module via $W(k) \onto k \into \Order_{K}$).
This has a natural $W^{(\infty)}$-algebra structure (\cite[V, \S 4, 2.6 Prop.]{DG70}).
We define a proalgebraic group $\alg{U}_{K}$ to be $\alg{O}_{K}^{\times}$.
For $n \ge 0$, we define a proalgebraic ideal
$\alg{p}_{K}^{n} \subset \alg{O}_{K}$ to be $\Grn_{k} \maxid_{K}^{n}$
and a proalgebraic subgroup $\alg{U}_{K}^{n} \subset \alg{U}_{K}$ to be $1 + \alg{p}_{K}^{n}$ if $n \ne 0$
(for $n = 0$, we set $\alg{U}_{K}^{0} = \alg{U}_{K}$).

We have the Teichm\"{u}ller lifting map
$\omega \colon \Ga^{(\infty)}\into W^{(\infty)} \to \alg{O}_{K}$.
If $\pi_{K}$ is a prime element of $\Order_{K}$ and $R$ is a perfect $k$-algebra,
every element of $\alg{O}_{K}(R)$ can be written as
$\sum_{n = 0}^{\infty} \omega(a_{n}) \pi_{K}^{n}$
for a unique sequence of elements $a_{n} \in R$.
In particular, $\pi_{K}$ is not a zero-divisor in $\alg{O}_{K}(R)$.

For a perfect $k$-algebra $R$, we define a ring $\alg{K}(R)$
to be $\alg{O}_{K}(R) \tensor_{\Order_{K}} K$
($= \alg{O}_{K}(R)[\pi_{K}^{-1}]$).
We show that $\alg{K}$ gives a sheaf of rings on $\pfpqc{k}$
in the manner described at the end of Section \ref{Sec:FpqcMain}.
Let $R$ be a perfect $k$-algebra and
let $R_{1}, \dots, R_{n}$ be perfect $R$-algebras with $\prod R_{i}$ faithfully flat over $R$.
The sheaf condition for $\alg{O}_{K}$ says that the sequence
	\[
			\alg{O}_{K}(R)
		\to
			\prod_{i} \alg{O}_{K}(R_{i})
		\rightrightarrows
			\prod_{i, j} \alg{O}_{K}(R_{i} \tensor_{R} R_{j})
	\]
is exact.
Since $K$ is flat over $\Order_{K}$ and the products are finite products, the sequence
	\[
			\alg{O}_{K}(R) \tensor_{\Order_{K}} K
		\to
			\prod_{i} \alg{O}_{K}(R_{i}) \tensor_{\Order_{K}} K
		\rightrightarrows
			\prod_{i, j} \alg{O}_{K}(R_{i} \tensor_{R} R_{j}) \tensor_{\Order_{K}} K
	\]
is exact.
Therefore $\alg{K}$ gives a sheaf on $\pfpqc{k}$.

Also the functor $\alg{K}^{\times}$ is a sheaf since we have a cartesian diagram
	\[
		\begin{CD}
				\alg{K}^{\times}
			@>>>
				\{1\} \cong \Spec k
			\\
			@VVV @VVV
			\\
				\alg{K} \times_{k} \alg{K}
			@>>>
				\alg{K},
		\end{CD}
	\]
where the bottom arrow is the multiplication
and the left arrow is given by $a \mapsto (a, a^{-1})$.
We have a natural morphism of sheaves of rings $\alg{O}_{K} \to \alg{K}$
and a natural morphism of sheaves of groups $\alg{U}_{K} \to \alg{K}^{\times}$.
These are injective since $\pi_{K}$ is not a zero-divisor in $\alg{O}_{K}(R)$
as we saw before.
We have $\alg{K}(k) = K$ and $\alg{K}(\algcl{k}) = \hat{K}^{\ur}$.
In general for a perfect field $k'$ containing $k$,
the ring $\alg{K}(k')$ is a complete discrete valuation field
whose normalized valuation is the lift of that for $K$.
If $\pi_{K}$ is a prime element of $\Order_{K}$ and $R$ is a perfect $k$-algebra,
every element of $\alg{K}(R)$ can be written as
$\sum_{n \in \Z} \omega(a_{n}) \pi_{K}^{n}$
for a unique sequence of elements $a_{n} \in R$ with $a_{n} = 0$ for $n < 0$ sufficiently small.

We define the valuation map as a morphism of sheaves $\alg{K}^{\times} \to \Z$ as follows.
For a perfect $k$-algebra $R$ and $x \in \Spec R$,
we denote by $k_{R, x}$ the residue field of $\Spec R$ at $x$.
The image of $a \in R$ by the natural $k$-algebra homomorphism $R \to k_{R, x}$
is denoted by $a(x)$.
For each $x \in \Spec R$, let $v_{x}$ be the composite
of the map $\alg{K}^{\times}(R) \to \alg{K}^{\times}(k_{R, x})$ coming from $R \to k_{R, x}$
and the map $\alg{K}^{\times}(k_{R, x}) \onto \Z$
coming from the normalized valuation of the complete discrete valuation field $\alg{K}(k_{R, x})$.

\begin{prp}
	\begin{enumerate}
		\item \label{Enum:Val:LocConst}
			For a perfect $k$-algebra $R$ and $\alpha \in \alg{K}^{\times}(R)$,
			the map $x \mapsto v_{x}(\alpha)$ from the underlying topological space of $\Spec R$
			to $\Z$ is locally constant.
			This defines a morphism of sheaves $\alg{K}^{\times} \to \Z$.
		\item \label{Enum:Val:Ex}
			The sequence $0 \to \alg{U}_{K} \to \alg{K}^{\times} \to \Z \to 0$
			is a split exact sequence in $\psh{k}$.
	\end{enumerate}
\end{prp}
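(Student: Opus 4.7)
For part \ref{Enum:Val:LocConst}, the strategy is to exploit both $\alpha$ and its inverse $\beta := \alpha^{-1}$ simultaneously to squeeze $v_x(\alpha)$ between finite bounds. Fix a prime element $\pi_K$ and use the Teichmüller expansions just described above the proposition to write $\alpha = \sum_{n \geq N_\alpha} \omega(a_n)\pi_K^n$ and $\beta = \sum_{m \geq N_\beta} \omega(b_m)\pi_K^m$ with $a_n, b_m \in R$ and $a_{N_\alpha}, b_{N_\beta}$ nonzero in $R$. For any $x \in \Spec R$, the image $\alpha(x) \in \alg{K}(k_{R,x})$ has Teichmüller expansion $\sum \omega(a_n(x))\pi_K^n$, and since the Teichmüller lift of a nonzero element of the residue field is a unit in a complete discrete valuation field, we have $v_x(\alpha) = \min\{n : a_n(x) \neq 0\}$, and similarly for $\beta$. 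The identity $v_x(\alpha) + v_x(\beta) = 0$ forces $N_\alpha \leq v_x(\alpha) \leq -N_\beta$ for every $x$, so the map $x \mapsto v_x(\alpha)$ takes only finitely many values.

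The next step is to observe that $\{x : v_x(\alpha) \geq n\} = V(\{a_m\}_{m < n})$ is closed in $\Spec R$, and dually $\{x : v_x(\alpha) \leq n\} = \{x : v_x(\beta) \geq -n\} = V(\{b_m\}_{m < -n})$ is also closed. Hence each fiber $\{x : v_x(\alpha) = n\}$ is closed, and since only finitely many are nonempty and they partition $\Spec R$, each is clopen. This yields the claimed locally constant map $\Spec R \to \Z$. Naturality in $R$ is straightforward from the functoriality of Teichmüller expansions, producing a morphism of presheaves (hence of sheaves) $\alg{K}^\times \to \Z$.

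For part \ref{Enum:Val:Ex}, the injectivity of $\alg{U}_K \hookrightarrow \alg{K}^\times$ has already been noted. To identify the kernel of $v$ with $\alg{U}_K$, suppose $v_x(\alpha) = 0$ for all $x \in \Spec R$. Then for each $n < 0$ the coefficient $a_n \in R$ vanishes at every point of $\Spec R$; since $R$ is reduced (as a perfect $k$-algebra, as recalled at the start of Section~\ref{Sec:FpqcMain}), we conclude $a_n = 0$ in $R$, so $\alpha \in \alg{O}_K(R)$. Applying the same argument to $\beta = \alpha^{-1}$ gives $\beta \in \alg{O}_K(R)$, whence $\alpha \in \alg{O}_K(R)^\times = \alg{U}_K(R)$.

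Finally, to obtain the splitting I would define $\Z \to \alg{K}^\times$ by $1 \mapsto \pi_K$. Concretely, a section $n \in \Z(R)$ is a locally constant function $\Spec R \to \Z$; decompose $\Spec R$ into the clopen fibers $\Spec R_i$ where $n$ takes the constant value $n_i$, send $n$ on each piece to $\pi_K^{n_i} \in \alg{K}^\times(R_i)$, and glue using the sheaf property of $\alg{K}^\times$ to obtain an element of $\alg{K}^\times(R)$. The composite $\Z \to \alg{K}^\times \to \Z$ is the identity by construction of $v$, giving both surjectivity in $\psh{k}$ and the splitting. The main subtlety in the whole argument is setting up the valuation so that it simultaneously has a closed-subscheme description from below (via the $a_n$) and from above (via the $b_m$); once that is in place, the remaining assertions are essentially formal consequences of the reducedness of perfect $k$-algebras and the sheaf property.
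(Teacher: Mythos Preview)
Your proposal is correct and follows essentially the same approach as the paper: both fix a prime element, use the Teichm\"uller expansion to identify $\{x : v_x(\alpha) \ge l\}$ as the closed vanishing locus of the low-index coefficients, and then appeal to the inverse $\alpha^{-1}$ to obtain openness (the paper phrases this last step as ``$\{v_x(\alpha)\ge l\}=\{v_x(\alpha^{-1})\le -l\}$'' rather than by bounding the range of values and arguing fiberwise, but the content is identical). For part~\ref{Enum:Val:Ex} the paper likewise uses reducedness of $R$ and the section $1\mapsto\pi_K$; the only cosmetic difference is that the paper reads off $a_0\in R^{\times}$ directly from $a_0(x)\ne 0$ for all $x$, whereas you deduce $\alpha\in\alg{U}_K(R)$ from $\alpha,\alpha^{-1}\in\alg{O}_K(R)$.
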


\begin{proof}
	\ref{Enum:Val:LocConst}.
	We fix a prime element $\pi_{K}$ of $\Order_{K}$.
	Let $\alpha = \sum_{n \in \Z} \omega(a_{n}) \pi_{K}^{n}$
	with $a_{n} \in R$, $a_{n} = 0$ for $n < 0$ sufficiently small.
	For an integer $l$, we have
		\[
				\{
						x \in \Spec R
					\,|\,
						v_{x}(\alpha) \ge l
				\}
			=
				\{
						x \in \Spec R
					\,|\,
						a_{l - 1}(x) = a_{l - 2}(x) = \dots = 0
				\},
		\]
	which is a closed subset of $\Spec R$.
	We know this set is open as well by writing it as $\{x \in \Spec R \,|\, v_{x}(\alpha^{-1}) \le - l\}$.
	Therefore this set is open and closed.
	This proves Assertion \ref{Enum:Val:LocConst}.
	
	\ref{Enum:Val:Ex}.
	The injectivity of $\alg{U}_{K} \to \alg{K}^{\times}$ was proved before.
	The morphism $\alg{K}^{\times} \to \Z$ has a section
	corresponding to a prime element in $\alg{K}(k) = K$.
	We prove that the kernel of $\alg{K}^{\times} \to \Z$ is $\alg{U}_{K}$.
	An element $\alpha = \sum \omega(a_{n}) \pi_{K}^{n} \in \alg{K}^{\times}(R)$
	is in the kernel of $\alg{K}^{\times}(R) \to \Z(R)$ if and only if
	$a_{n}(x) = 0$ and $a_{0}(x) \ne 0$ for any $x \in \Spec R$ and $n < 0$.
	Since $R$ is reduced, this is equivalent to saying that
	$a_{n} = 0$ for $n < 0$ and $a_{0} \in R^{\times}$,
	which in turn is equivalent to $\alpha \in \alg{U}_{K}(R)$.
\end{proof}

%%%%%%%%%%%%%%%%%%%%%%%%%%%%%%%%%%%%%%%%%%%%%%%%%%%%%%%%%%%%%%%%%%%%%%%%%%%%%%%%%%%%%%%%%%%%%

\subsection{Sheaves associated with a finite extension of a local field} \label{Sec:SheavesL}
Let $L$ be a finite extension of $K$ with residue field $k'$.
The above constructions of sheaves can be made also for the pair $(L, k')$ instead of the pair $(K, k)$.
We write the resulting sheaves by $\alg{O}_{L, k'}, \alg{U}_{L, k'}, \alg{L}_{k'}$, etc.
For example, we regard the ring of integers $\Order_{L}$ of $L$ as a profinite $W(k')$-algebra
to define $\alg{O}_{L, k'}$ to be $\Grn_{k'} \Order_{L}$, which is a sheaf of rings on $\pfpqc{k'}$.
On the other hand, the ring $\Order_{L}$ can be regarded as a profinite $W(k)$-algebra,
so that we can define another sheaf of rings on $\pfpqc{k}$ to be $\Grn_{k} \Order_{L}$,
which we denote by $\alg{O}_{L, k}$.
The inclusion $\Order_{K} \into \Order_{L}$ is a morphism of profinite $W(k)$-algebras,
so it induces an inclusion $\alg{O}_{K} \into \alg{O}_{L, k}$ of perfect affine $W^{(\infty)}$-algebras over $k$.
For a perfect $k$-algebra $R$,
we have $\alg{O}_{L, k}(R) = \alg{O}_{K}(R) \tensor_{\Order_{K}} \Order_{L}$.
Hence we can define the norm map $N_{L / K} \colon \alg{O}_{L, k} \to \alg{O}_{K}$
as a morphism of sheaves on $\pfpqc{k}$.
We define sheaves $\alg{U}_{L, k}$ and $\alg{L}_{k}$ on $\pfpqc{k}$
by setting $\alg{U}_{L, k}(R) = \alg{O}_{L, k}(R)^{\times}$
and $\alg{L}_{k}(R) = \alg{O}_{L, k}(R) \tensor_{\Order_{L}} L$ for each perfect $k$-algebra $R$.
When $L / K$ is totally ramified,
these two constructions give the same sheaves
$\alg{O}_{L, k'} = \alg{O}_{L, k}$ and $\alg{L}_{k'} = \alg{L}_{k}$,
so that we can omit the subscript $k = k'$ without ambiguity.

If $L / K$ is a finite Galois extension,
the Galois group $G = \Gal(L / K)$ acts on $\Order_{L}$ over $W(k)$
and the inertia group $T = T(L / K)$ acts on $\Order_{L}$ over $W(k')$.
By the functoriality of $\Grn_{k}$ and $\Grn_{k'}$,
the sheaves $\alg{O}_{L, k}$ and $\alg{L}_{k}$ become sheaves of $G$-modules on $\pfpqc{k}$
and the sheaves $\alg{O}_{L, k'}$ and $\alg{L}_{k'}$ become sheaves of $T$-modules on $\pfpqc{k'}$.
The norm map $N_{L / K}$ coincides with the action of the element
$N_{G} = \sum_{\sigma \in G} \sigma$ of the group ring $\Z[G]$.

We return to a general finite extension $L / K$.
We describe rational points of the sheaves defined above.
We have $\alg{O}_{L, k}(k) = \alg{O}_{L, k'}(k') = \Order_{L}$
and $\alg{L}_{k}(k) = \alg{L}_{k'}(k') = L$.
Also we have
	$
			\alg{O}_{L, k}(\algcl{k})
		=
			W(\algcl{k}) \ctensor_{W(k)} \Order_{L}
		=
			\hat{\Order}_{K}^{\ur} \tensor_{\Order_{K}} \Order_{L}
	$.
To make this more explicit, let $M$ be the maximal unramified subextension of $L / K$.
For a $k$-embedding $\rho \colon k' \into \algcl{k}$,
we denote by $\hat{\Order}_{K}^{\ur} \tensor_{\Order_{M}}^{\rho} \Order_{L}$
the tensor product of $\hat{\Order}_{K}^{\ur}$ and $\Order_{L}$ over $\Order_{M}$
with $\hat{\Order}_{K}^{\ur}$ regarded as an $\Order_{M}$-algebra
via the $\Order_{K}$-embedding $\Order_{M} \into \hat{\Order}_{K}^{\ur}$
that is the lift of $\rho$.
Then the natural map
	$
			\hat{\Order}_{K}^{\ur} \tensor_{\Order_{K}} \Order_{L}
		\to
			\prod_{\rho \in \Hom_{k}(k', \algcl{k})}
				\hat{\Order}_{K}^{\ur} \tensor_{\Order_{M}}^{\rho} \Order_{L}
	$
sending $a \tensor b$ to $(a \tensor^{\rho} b)_{\rho}$ is a ring isomorphism.
This isomorphism translates the action of an element $\rho' \in \Gal(\algcl{k} / k)$
into the automorphism
	$
			(\alpha_{\rho})_{\rho}
		\mapsto
			((\rho' \tensor \id_{\Order_{L}})(\alpha_{\rho'^{-1} \rho}))_{\rho}
	$,
where
	$
			\rho' \tensor \id_{\Order_{L}}
		\colon
			\hat{\Order}_{K}^{\ur} \tensor_{\Order_{M}}^{\rho'^{-1} \rho} \Order_{L}
		\isomto
			\hat{\Order}_{K}^{\ur} \tensor_{\Order_{M}}^{\rho} \Order_{L}
	$
is the isomorphism that sends
$a \tensor^{\rho'^{-1} \rho} b$ to $\rho'(a) \tensor^{\rho} b$.%
	\footnote{This map is well-defined since if $c \in \Order_{M}$, then
	the two different expressions of the same element
		$
				1 \tensor^{\rho'^{-1} \rho} c
			=
				(\rho'^{-1} \rho)(c) \tensor^{\rho'^{-1} \rho} 1
		$
	are mapped to the same element
		$
				1 \tensor^{\rho} c
			=
				\rho(c) \tensor^{\rho} 1
		$.}
If $L / K$ is Galois,
the same isomorphism translates the action of an element $\sigma \in \Gal(L / K)$
into the automorphism
	$
			(\alpha_{\rho})_{\rho}
		\mapsto
			((\id_{\hat{\Order}_{L}^{\ur}} \tensor \sigma)(\alpha_{\rho \sigma|_{M}}))_{\rho}
	$,
where
	$
			\id_{\hat{\Order}_{L}^{\ur}} \tensor \sigma
		\colon
			\hat{\Order}_{K}^{\ur} \tensor_{\Order_{M}}^{\rho \sigma|_{M}} \Order_{L}
		\isomto
			\hat{\Order}_{K}^{\ur} \tensor_{\Order_{M}}^{\rho} \Order_{L}
	$
is the isomorphism that sends
$a \tensor^{\rho \sigma|_{M}} b$ to $a \tensor^{\rho} \sigma(b)$.%
	\footnote{Similarly, this map is well-defined since if $c \in \Order_{M}$, then
	the two different expressions of the same element
		$
				1 \tensor^{\rho \sigma|_{M}} c
			=
				(\rho \sigma)(c) \tensor^{\rho \sigma|_{M}} 1
		$
	are mapped to the same element
		$
				1 \tensor^{\rho} \sigma(c)
			=
				(\rho \sigma)(c) \tensor^{\rho} 1
		$.}
For each $\rho \in \Hom_{k}(k', \algcl{k})$,
the ring $\hat{\Order}_{K}^{\ur} \tensor_{\Order_{M}}^{\rho} \Order_{L}$
is non-canonically isomorphic to $\hat{\Order}_{L}^{\ur}$.
Similarly we have
	$
			\alg{L}_{k}(\algcl{k})
		=
			\hat{K}^{\ur} \tensor_{K} L
		\isomto
			\prod_{\rho}
				\hat{K}^{\ur} \tensor_{M}^{\rho} L
		\cong
			(\hat{L}^{\ur})^{\Hom_{k}(k', \algcl{k})}
	$,
the last isomorphism being non-canonical.

We discuss the valuation map for $L$.
We apply $\Res_{k' / k}$ to the split exact sequence
$0 \to \alg{U}_{L, k'} \to \alg{L}_{k'}^{\times} \to \Z \to 0$ in $\psh{k'}$.
By Proposition \ref{Prop:GrnRes},
we have $\Res_{k' / k} \alg{O}_{L, k'} \cong \alg{O}_{L, k}$,
and so $\Res_{k' / k} \alg{L}_{k'} \cong \alg{L}_{k}$.
The sheaf $\Res_{k' / k} \Z$ is the \'etale group scheme over $k$
whose group of $\algcl{k}$-points is the $\Gal(\algcl{k} / k)$-module $\Z[\Hom_{k}(k', \algcl{k})]$,
the free abelian group generated by the $\Gal(\algcl{k} / k)$-set $\Hom_{k}(k', \algcl{k})$.
Therefore we have a split exact sequence
$0 \to \alg{U}_{L, k} \to \alg{L}_{k}^{\times} \to \Z[\Hom_{k}(k', \algcl{k})] \to 0$ in $\psh{k}$.
The map $(\alg{L}_{k}^{\times})(\algcl{k}) \to \Z[\Hom_{k}(k', \algcl{k})]$
is translated, via the above description, to the map
$((\hat{L}^{\ur})^{\times})^{\Hom_{k}(k', \algcl{k})} \to \Z[\Hom_{k}(k', \algcl{k})]$
sending $(\alpha_{\rho})_{\rho} \mapsto \sum_{\rho} v_{\hat{L}^{\ur}}(\alpha_{\rho}) \rho$.
We have a commutative diagram with exact rows
	\begin{equation} \label{Eq:ValDiagram}
		\begin{CD}
				0
			@>>>
				\alg{U}_{K}
			@>>>
				\alg{K}^{\times}
			@>>>
				\Z
			@>>>
				0
			\\
			@.
			@VV \text{incl} V
			@VV \text{incl} V
			@VVV
			@.
			\\
				0
			@>>>
				\alg{U}_{L, k}
			@>>>
				\alg{L}_{k}^{\times}
			@>>>
				\Z[\Hom_{k}(k', \algcl{k})]
			@>>>
				0
			\\
			@.
			@VV N_{L / K} V
			@VV N_{L / K} V
			@VVV
			@.
			\\
				0
			@>>>
				\alg{U}_{K}
			@>>>
				\alg{K}^{\times}
			@>>>
				\Z
			@>>>
				0,
		\end{CD}
	\end{equation}
where the first morphism at the right column sends $1$ to $\sum_{\rho} \rho$
and the second sends every $\rho$ to $1$.
We define $\alg{U}_{L,\, k' / k}$ to be the kernel of the composite map
$\alg{L}_{k}^{\times} \onto \Z[\Hom_{k}(k', \algcl{k})] \onto \Z$.
We have an exact sequence
	\begin{equation} \label{Eq:ValExtSh}
			0
		\to
			\alg{U}_{L,\, k' / k}
		\to
			\alg{L}_{k}^{\times}
		\to
			\Z
		\to
			0.
	\end{equation}
This sequence is the one we will use instead of the sequence \eqref{Eq:ValExtGp}.
If $L / K$ is Galois, then the inclusions
$\alg{U}_{L, k} \into \alg{U}_{L,\, k' / k} \into \alg{L}_{k}^{\times}$ are
morphisms of sheaves of $G$-modules.
Thus we have an action of $G$ on 
$\Coker(\alg{U}_{L, k} \into \alg{L}_{k}^{\times}) = \Z[\Gal(k' / k)]$.
The action of an element $\sigma \in G$ on $\Z[\Gal(k' / k)]$ is given
by multiplication by the image of $\sigma^{-1}$ from the right.

%%%%%%%%%%%%%%%%%%%%%%%%%%%%%%%%%%%%%%%%%%%%%%%%%%%%%%%%%%%%%%%%%%%%%%%%%%%%%%%%%%%%%%%%%%%%%

\subsection{Proof of the main theorem} \label{Sec:LCFT}
We prove Theorem \ref{Th:Main}.
We need the following vanishing result.

\begin{prp} \label{Prop:Vanish}
	Let $L / K$ be a finite Galois extension.
	Then the Tate cohomology sheaf $\hat{H}^{i}(\Gal(L / K), \alg{L}_{k}^{\times})$
	vanishes for all $i \in \Z$.
	More generally, for any subextension $E$ of $L / K$,
	the sheaf $\hat{H}^{i}(\Gal(L / E), \alg{L}_{k}^{\times})$
	vanishes for all $i \in \Z$.
\end{prp}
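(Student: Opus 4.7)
The plan is to reduce the sheaf-theoretic vanishing to a computation of abstract Tate cohomology groups via Proposition \ref{Prop:Tate}, and then to handle those groups via Shapiro's lemma together with Hilbert 90 and the surjectivity of the norm map. First I would apply Proposition \ref{Prop:Tate} to the sheaf $A = \alg{L}_{k}^{\times}$ equipped with its natural $G$-action, where $G = \Gal(L / K)$, using the middle row $0 \to \alg{U}_{L, k} \to \alg{L}_{k}^{\times} \to \Z[\Hom_{k}(k', \algcl{k})] \to 0$ of diagram \eqref{Eq:ValDiagram}. This sequence is $G$-equivariant (the right-hand term is acted on through the quotient $\Gal(k' / k)$), its left-hand term is an affine proalgebraic group, and its right-hand term is \'etale with finitely generated group of $\algcl{k}$-points, so the hypotheses of Proposition \ref{Prop:Tate} are met. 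It follows that $\hat{H}^{i}(G, \alg{L}_{k}^{\times})$ is an affine sheaf whose group of $\algcl{k}$-points is the abstract Tate cohomology group $\hat{H}^{i}(G, \alg{L}_{k}^{\times}(\algcl{k}))$, so by the faithfulness of the functor sending a commutative affine proalgebraic group to its group of $\algcl{k}$-points, it suffices to prove vanishing of the latter for all $i \in \Z$.

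Let $M$ be the maximal unramified subextension of $L / K$ and set $H = \Gal(L / M)$, which coincides with the inertia subgroup $T(L / K)$ and is normal in $G$ with quotient $G / H = \Gal(k' / k)$. Using the explicit description $\alg{L}_{k}^{\times}(\algcl{k}) \cong \prod_{\rho \in \Hom_{k}(k', \algcl{k})} (\hat{K}^{\ur} \tensor_{M}^{\rho} L)^{\times}$ and the formulas for the $G$-action given in Section \ref{Sec:SheavesL}, and fixing a base embedding $\rho_{0}$, one checks that $\Hom_{k}(k', \algcl{k})$ is $G$-equivariantly isomorphic to $G / H$ and that the action of $H$ on the factor at $\rho_{0}$, identified with $(\hat{L}^{\ur})^{\times}$, is the tautological action of $H = \Gal(\hat{L}^{\ur} / \hat{K}^{\ur})$. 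Consequently $\alg{L}_{k}^{\times}(\algcl{k}) \cong \mathrm{Ind}_{H}^{G} (\hat{L}^{\ur})^{\times}$ as $G$-modules.

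Shapiro's lemma now gives $\hat{H}^{i}(G, \alg{L}_{k}^{\times}(\algcl{k})) \cong \hat{H}^{i}(H, (\hat{L}^{\ur})^{\times})$. Since $\hat{L}^{\ur} / \hat{K}^{\ur}$ is a totally ramified Galois extension of complete discrete valuation fields with algebraically closed residue field $\algcl{k}$, Hilbert's theorem 90 yields $\hat{H}^{1} = 0$ and the surjectivity of the norm map in this situation (\cite[\S 2, Cor.\ to Prop.\ 1]{Ser61}) yields $\hat{H}^{0} = 0$. The same vanishings hold with $H$ replaced by any subgroup $H' \subset H$, since $(\hat{L}^{\ur})^{H'}$ is itself a complete discrete valuation field totally ramified over $\hat{K}^{\ur}$ with algebraically closed residue field. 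Invoking \cite[IX, Th.\ 8]{Ser79} exactly as in the derivation of \eqref{Eq:Vanish}, we obtain $\hat{H}^{i}(H', (\hat{L}^{\ur})^{\times}) = 0$ for all $i \in \Z$ and all subgroups $H' \subset H$, which handles the case $E = K$.

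For a general subextension $E$ of $L / K$, the same Proposition \ref{Prop:Tate} reduction applies with $G$ replaced by $\Gal(L / E)$, since the exact sequence above is also $\Gal(L / E)$-equivariant. Mackey's double-coset formula decomposes $\mathrm{Ind}_{H}^{G} (\hat{L}^{\ur})^{\times}$, as a $\Gal(L / E)$-module, into a direct sum of modules induced from the subgroups $\Gal(L / E) \cap g H g^{-1}$ for $g$ running over $\Gal(L / E) \backslash G / H$, and Shapiro identifies the Tate cohomology of each summand with that of $(\hat{L}^{\ur})^{\times}$ under the subgroup $g^{-1} \Gal(L / E) g \cap H$ of $H$, which vanishes by the previous step. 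The step I expect to require the most care is the identification in the second paragraph, where the somewhat delicate formulas of Section \ref{Sec:SheavesL} involving the maps $\id \tensor \sigma$ must be tracked to confirm the induced-module structure of $\alg{L}_{k}^{\times}(\algcl{k})$.
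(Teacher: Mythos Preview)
Your argument is correct. The case $E = K$ is handled exactly as in the paper: apply Proposition~\ref{Prop:Tate} to the valuation sequence, recognize $\alg{L}_{k}^{\times}(\algcl{k})$ as the induced module $\mathrm{Ind}_{H}^{G}(\hat{L}^{\ur})^{\times}$, and invoke Shapiro together with the vanishing \eqref{Eq:Vanish}.

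For the general subextension $E$, however, you and the paper take genuinely different routes. The paper observes that $\alg{L}_{k}^{\times} \cong \Res_{k''/k}\,\alg{L}_{k''}^{\times}$ as sheaves of $\Gal(L/E)$-modules (where $k''$ is the residue field of $E$), and since $\Res_{k''/k}$ is exact (Proposition~\ref{Prop:Weil}\,\ref{Enum:WeilAdj}), the Tate cohomology sheaf is the Weil restriction of $\hat{H}^{i}(\Gal(L/E), \alg{L}_{k''}^{\times})$, which vanishes by the first case applied to the extension $L / E$ over the new base $k''$. You instead stay over $k$, apply Proposition~\ref{Prop:Tate} directly with the group $\Gal(L/E)$, and use Mackey's double-coset formula to decompose $\mathrm{Res}_{\Gal(L/E)}^{G}\mathrm{Ind}_{H}^{G}(\hat{L}^{\ur})^{\times}$; each summand is handled by Shapiro and the vanishing of $\hat{H}^{i}(H', (\hat{L}^{\ur})^{\times})$ for \emph{every} subgroup $H' \subset H$, which you were careful to establish. (Since $H$ is normal in $G$, the relevant subgroups are all conjugates of $T(L/E)$ inside $H$, so this is not much extra.) The paper's approach is cleaner in that it literally reduces to the already-proved first case and avoids Mackey bookkeeping; your approach is more purely group-theoretic and avoids bringing in the Weil restriction machinery, at the cost of needing the slightly stronger intermediate vanishing for all $H' \subset H$.
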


\begin{proof}
	First we show that
	$\hat{H}^{i}(\Gal(L / K), \alg{L}_{k}^{\times}) = 0$.
	By the exact sequence
	$0 \to \alg{U}_{L, k} \to \alg{L}_{k}^{\times} \to \Z[\Gal(k' / k)] \to 0$
	and Proposition \ref{Prop:Tate}, we know that
	the sheaf $\hat{H}^{i}(\Gal(L / K), \alg{L}_{k}^{\times})$ is an affine proalgebraic group
	with group of $\algcl{k}$-points given by
		$
				\hat{H}^{i}(\Gal(L / K), \alg{L}_{k}^{\times}(\algcl{k}))
			=
				\hat{H}^{i}(\Gal(L / K), (\hat{K}^{\ur} \tensor_{K} L)^{\times})
		$.
	The description of $(\hat{K}^{\ur} \tensor_{K} L)^{\times}$ in Section \ref{Sec:SheavesL}
	shows that this $\Gal(L / K)$-module is induced
	from the $T(L / K)$-module $(\hat{L}^{\ur})^{\times}$.
	By Shapiro's lemma (\cite[VII, \S 5, Exercise]{Ser79}),
	we have
		$
				\hat{H}^{i}(\Gal(L / K), (\hat{K}^{\ur} \tensor_{K} L)^{\times})
			=
				\hat{H}^{i}(T(L / K), (\hat{L}^{\ur})^{\times})
		$.
	This group is zero as shown in the proof of Theorem \ref{Th:SerreLCFT}.
	
	Next we show that
	$\hat{H}^{i}(\Gal(L / E), \alg{L}_{k}^{\times}) = 0$.
	The isomorphism $\alg{L}_{k}^{\times} \cong \Res_{k'' / k} \alg{L}_{k''}^{\times}$
	stated in Section \ref{Sec:SheavesL}
	is $\Gal(L / E)$-equivariant.
	Since $\Res_{k'' / k}$ is an exact functor by part \ref{Enum:WeilAdj} of Proposition \ref{Prop:Weil},
	we have
		$
				\hat{H}^{i}(\Gal(L / E), \alg{L}_{k}^{\times})
			\cong
				\Res_{k'' / k} \hat{H}^{i}(\Gal(L / E), \alg{L}_{k''}^{\times})
		$,
	which is zero by the first case.
\end{proof}

\begin{proof}[Proof of Theorem \ref{Th:Main}]
	First we construct a homomorphism
	$\pi_{1}^{k}(\alg{K}^{\times}) \to \Gal(L / K)^{\ab}$
	for each finite Galois extension $L / K$.
	Let $k'$ be the residue field of $L$.
	We set $G = \Gal(L / K)$, $\mathfrak{g} = \Gal(k' / k)$,
	$T = T(L / K)$ and $T_{a} = T(L \cap K^{\ab} / K)$.
	We regard $G$, $\mathfrak{g}$ and $T_{a}$ as constant groups over $k$
	(though the group ring $\Z[\mathfrak{g}]$ is
	regarded as an \'etale group over $k$ as before).
	We apply Proposition \ref{Prop:Vanish} to
	the short exact sequence \eqref{Eq:ValExtSh} of sheaves of $G$-modules.
	The long exact sequence then gives an isomorphism
	$\hat{H}^{i - 1}(G, \Z) \isomto \hat{H}^{i}(G, \alg{U}_{L,\, k' / k})$
	for any $i \in \Z$.
	
	We examine this isomorphism for $i = 0$.
	Since $\hat{H}^{-1}(G, \Z) = 0$,
	we have $\hat{H}^{0}(G, \alg{U}_{L,\, k' / k}) = 0$.
	This means, by the definition of Tate cohomology, that
	the norm map (endomorphism)
	$N = \sum_{\sigma \in G} \sigma \colon \alg{U}_{L,\, k' / k} \to \alg{U}_{L,\, k' / k}$
	is a surjection onto the $G$-invariant part of the sheaf of $G$-modules $\alg{U}_{L,\, k' / k}$.
	Since the $G$-invariant part of the morphism $\alg{L}_{k}^{\times} \onto \Z[\mathfrak{g}]$
	is the valuation map $\alg{K}^{\times} \onto \Z$,
	the $G$-invariant part of $\alg{U}_{L,\, k' / k}$ is $\alg{U}_{K}$.
	Hence the norm map $N$ gives a surjection
	$\alg{U}_{L,\, k' / k} \onto \alg{U}_{K}$.
	
	Next we examine the same isomorphism for $i = -1$.
	Since $\hat{H}^{-2}(G, \Z)$ is the constant group $G^{\ab}$,
	we have $\hat{H}^{-1}(G, \alg{U}_{L,\, k' / k}) \cong G^{\ab}$.
	By definition, the sheaf
	$\hat{H}^{-1}(G, \alg{U}_{L,\, k' / k})$
	is the kernel of the norm map
	$N \colon \alg{U}_{L,\, k' / k} \onto \alg{U}_{K}$
	divided by the product $I_{G} \alg{U}_{L,\, k' / k}$
	of the sheaf of $G$-modules $\alg{U}_{L,\, k' / k}$ and
	the augmentation ideal $I_{G}$ of the group ring $\Z[G]$.
	Therefore we get a short exact sequence
		$
				0
			\to
				G^{\ab}
			\to
				\alg{U}_{L,\, k' / k} / I_{G} \alg{U}_{L,\, k' / k}
			\overset{N}{\to}
				\alg{U}_{K}
			\to
				0
		$.
	
	Consider the following commutative diagram with exact rows:
		\[
			\begin{CD}
					0
				@>>>
					\alg{U}_{L,\, k' / k} / I_{G} \alg{U}_{L,\, k' / k}
				@>>>
					\alg{L}_{k}^{\times} / I_{G} \alg{U}_{L,\, k' / k}
				@>>>
					\Z
				@>>>
					0
				\\
				@.
				@VV N V
				@VV N V
				@|
				@.
				\\
					0 @>>> \alg{U}_{K} @>>> \alg{K}^{\times} @>>> \Z @>>> 0.
			\end{CD}
		\]
	The above short exact sequence fits in the first column of this diagram.
	Hence we get a short exact sequence
		\[
				0
			\to
				G^{\ab}
			\to
				\alg{L}_{k}^{\times} / I_{G} \alg{U}_{L,\, k' / k}
			\overset{N}{\to}
				\alg{K}^{\times}
			\to
				0.
		\]
	The resulting long exact sequence of homotopy groups gives a homomorphism
		\[
			\pi_{1}^{k}(\alg{K}^{\times}) \to G^{\ab}.
		\]
	Note that we can give the following more explicit description of the morphism
	$G^{\ab} \to \alg{L}_{k}^{\times} / I_{G} \alg{U}_{L,\, k' / k}$
	via the presentation 
		$
				\alg{L}_{k}^{\times}(\algcl{k})
			=
				\prod_{\rho \in \mathfrak{g}}
					(\hat{K}^{\ur} \tensor_{M}^{\rho} L)^{\times}
		$
	given in Section \ref{Sec:SheavesL}.
	Let
		$
				\alpha
			=
				(\alpha_{\rho})_{\rho}
			\in
				\prod_{\rho \in \mathfrak{g}}
					(\hat{K}^{\ur} \tensor_{M}^{\rho} L)^{\times}
		$
	be the element given by
	$\alpha_{\rho} = 1$ for $\rho \ne \id$
	and $\alpha_{\id} = 1 \tensor \pi_{L}$ for a prime element $\pi_{L}$ of $\Order_{L}$.%
		\footnote{This $\alpha$ is different from
		$\beta := 1 \tensor \pi_{L} \in (\hat{K}^{\ur} \tensor_{K} L)^{\times}$
		unless $L / K$ is totally ramified,
		since $\beta$ corresponds to
			$
					(1 \tensor^{\rho} \pi_{L})_{\rho}
				\in
					\prod_{\rho \in \mathfrak{g}}
						(\hat{K}^{\ur} \tensor_{M}^{\rho} L)^{\times}
			$,
		whose $\rho$-component for any $\rho \ne \id$ is $1 \tensor^{\rho} \pi_{L} \ne 1$.
		Also if $L / K$ is unramified and $\pi_{L}$ is taken from $K$,
		then $\sigma(\beta) = \beta$ and $\sigma(\beta) / \beta = 1 \ne \sigma(\alpha) / \alpha$
		unless $L = K$.}
	Then, by writing down all maps involved, we see that
	the image of $\sigma \in G^{\ab}$ is given by $\sigma(\alpha) / \alpha$
	(see Section \ref{Sec:SheavesL} for the description of $\sigma(\alpha)$).
	This element $\sigma(\alpha) / \alpha$ is further mapped to $\sigma^{-1} - 1$
	via the valuation map $\alg{L}_{k}^{\times} \onto \Z[\mathfrak{g}]$.
	
	Next we show that the homomorphisms
	$\pi_{1}^{k}(\alg{K}^{\times}) \to G^{\ab}$ just constructed
	form an inverse system for finite Galois extensions $L / K$.
	Let $L_{1}$ be a finite Galois extension of $K$ containing $L$.
	Let $k_{1}'$ be the residue field of $L_{1}$ and set $G_{1} = \Gal(L_{1} / K)$.
	To show that the homomorphisms $\pi_{1}^{k}(\alg{K}^{\times}) \to G_{1}^{\ab}$ and
	$\pi_{1}^{k}(\alg{K}^{\times}) \to G^{\ab}$ are compatible,
	consider the following commutative diagram with exact rows:
		\[
			\begin{CD}
					0
				@>>>
					\alg{U}_{L_{1},\, k'_{1} / k}
				@>>>
					\alg{L}_{1, k}^{\times}
				@>>>
					\Z
				@>>>
					0
				\\
				@.
				@VV N_{L_{1} / L} V
				@VV N_{L_{1} / L} V
				@|
				@.
				\\
					0
				@>>>
					\alg{U}_{L,\, k' / k}
				@>>>
					\alg{L}_{k}^{\times}
				@>>>
					\Z
				@>>>
					0.
			\end{CD}
		\]
	The top row is a sequence of sheaves of $G_{1}$-modules
	and the bottom row is a sequence of sheaves of $G$-modules.
	The actions are compatible with the natural surjection $G_{1} \onto G$.
	This diagram induces a commutative diagram on homology
		\[
			\begin{CD}
					H_{1}(G_{1}, \Z)
				@>>>
					H_{0}(G_{1}, \alg{U}_{L_{1},\, k'_{1} / k})
				\\
				@| @VV N_{L_{1} / L} V
				\\
					H_{1}(G_{1}, \Z)
				@>>>
					H_{0}(G_{1}, \alg{U}_{L,\, k' / k})
				\\
				@VVV @VVV
				\\
					H_{1}(G, \Z)
				@>>>
					H_{0}(G, \alg{U}_{L,\, k' / k}).
			\end{CD}
		\]
	Note that $N_{L_{1} / L}$ maps the kernel of $N_{G_{1}} = N_{L_{1} / K}$ on $\alg{U}_{L_{1},\, k' / k}$
	to the kernel of $N_{G} = N_{L / K}$ on $\alg{U}_{L,\, k' / k}$
	since $N_{L_{1} / K} = N_{L / K} \compose N_{L_{1} / L}$.
	Therefore we have a commutative diagram
		\[
			\begin{CD}
					\hat{H}^{-2}(G_{1}, \Z)
				@> \sim >>
					\hat{H}^{-1}(G_{1}, \alg{U}_{L_{1},\, k'_{1} / k})
				\\
				@VVV
				@VV N_{L_{1} / L} V
				\\
					\hat{H}^{-2}(G, \Z)
				@> \sim >>
					\hat{H}^{-1}(G, \alg{U}_{L,\, k' / k}).
			\end{CD}
		\]
	The left vertical map is identified with the natural surjection
	$G_{1}^{\ab} \onto G^{\ab}$.
	Therefore we have a commutative diagram with exact rows
		\[
			\begin{CD}
					0
				@>>>
					G_{1}^{\ab}
				@>>>
					\alg{L}_{1, k}^{\times} / I_{G_{1}} \alg{U}_{L_{1},\, k'_{1} / k}
				@> N_{L_{1} / K} >>
					\alg{K}^{\times}
				@>>>
					0
				\\
				@.
				@VV \mathrm{can.} V
				@VV N_{L_{1} / L} V
				@|
				@.
				\\
					0
				@>>>
					G^{\ab}
				@>>>
					\alg{L}_{k}^{\times} / I_{G} \alg{U}_{L,\, k' / k}
				@> N_{L / K} >>
					\alg{K}^{\times}
				@>>>
					0.
			\end{CD}
		\]
	The resulting long exact sequences of homotopy groups show the compatibility.
	
	Hence we have obtained a homomorphism
	$\pi_{1}^{k}(\alg{K}^{\times}) \to \Gal(K^{\ab} / K)$.
	We show that this satisfies the commutative diagram \eqref{Eq:LCFT:Compati} in the theorem.
	Let $I_{\mathfrak{g}}$ be the augmentation ideal of the group ring $\Z[\mathfrak{g}]$.
	Since
		$
				\mathfrak{g}^{\ab}
			\cong 
				I_{\mathfrak{g}} / I_{\mathfrak{g}}^{2}
		$
	by $\rho \leftrightarrow \rho - 1$,
	we have an exact sequence
		$
			0 \to \mathfrak{g}^{\ab} \to \Z[\mathfrak{g}] / I_{\mathfrak{g}}^{2} \to \Z \to 0
		$.
	Also the surjection $\alg{L}_{k}^{\times} \onto \Z[\mathfrak{g}]$
	gives a surjection $\alg{U}_{L,\, k' / k} \onto I_{\mathfrak{g}}$.
	Multiplying $I_{G}$, we have a surjection
	$I_{G} \alg{U}_{L,\, k' / k} \onto I_{G} I_{\mathfrak{g}} = I_{\mathfrak{g}}^{2}$.
	Therefore the kernel of the surjection
		$
				\alg{L}_{k}^{\times} / I_{G} \alg{U}_{L,\, k' / k}
			\onto
				\Z[\mathfrak{g}] / I_{\mathfrak{g}}^{2}
		$
	is $\alg{U}_{L, k} / (\alg{U}_{L, k} \cap I_{G} \alg{U}_{L,\, k' / k})$.
	Hence we have the following commutative diagram with exact rows and columns:
		\begin{equation}
			\begin{CD}
				@. 0 @. 0 @. 0 @.
				\\
				@. @VVV @VVV @VVV @.
				\\
					0
				@>>>
					T_{a}
				@>>>
					\alg{U}_{L, k} / (\alg{U}_{L, k} \cap I_{G} \alg{U}_{L,\, k' / k})
				@>>>
					\alg{U}_{K}
				@>>>
					0
				\\
				@. @VVV @VVV @VVV @.
				\\
					0
				@>>>
					G^{\ab}
				@>>>
					\alg{L}_{k}^{\times} / I_{G} \alg{U}_{L,\, k' / k}
				@>>>
					\alg{K}^{\times}
				@>>>
					0
				\\
				@. @VVV @VVV @VVV @.
				\\
					0
				@>>>
					\mathfrak{g}^{\ab}
				@>>>
					\Z[\mathfrak{g}] / I_{\mathfrak{g}}^{2}
				@>>>
					\Z
				@>>>
					0
				\\
				@. @VVV @VVV @VVV @.
				\\
				@. 0 @. 0 @. 0. @.
			\end{CD}
			\label{Eq:NormDiagram}
		\end{equation}
	Here, since the composite map
		$
				G^{\ab} 
			\to
				\alg{L}_{k}^{\times} / I_{G} \alg{U}_{L,\, k' / k}
			\to
				\Z[\mathfrak{g}] / I_{\mathfrak{g}}^{2}
		$
	sends $\sigma$ to $\sigma^{-1} - 1 = 1 - \sigma$
	as we saw after the construction of $\pi_{1}^{k}(\alg{K}^{\times}) \to G^{\ab}$,
	the homomorphism $\mathfrak{g}^{\ab} \to \Z[\mathfrak{g}]$
	is switched from the natural one $\rho \to \rho - 1$ to $-1$ times it.
	The resulting long exact sequences of homotopy groups give
	the commutative diagram \eqref{Eq:LCFT:Compati}.
	
	We show that the left vertical map of the diagram \eqref{Eq:LCFT:Compati}
	coincides with the isomorphism of the local class field theory of Hazewinkel.
	If $L / K$ is totally ramified,
	then the top horizontal sequence of \eqref{Eq:NormDiagram} becomes
	$0 \to G^{\ab} \to \alg{U}_{L} / I_{G} \alg{U}_{L} \to \alg{U}_{K} \to 0$.
	The morphism $G^{\ab} \to \alg{U}_{L} / I_{G} \alg{U}_{L}$
	sends $\sigma \mapsto \sigma(\pi_{L}) / \pi_{L}$ for a prime element $\pi_{L}$ of $\Order_{L}$
	as we saw after the construction of the homomorphism $\pi_{1}^{k}(\alg{K}^{\times}) \to G^{\ab}$.
	Therefore our sequence
	$0 \to G^{\ab} \to \alg{U}_{L} / I_{G} \alg{U}_{L} \to \alg{U}_{K} \to 0$
	for totally ramified $L / K$
	coincides with the sequence \eqref{Eq:SerEx},
	so we see the coincidence.
	
	The left and right vertical arrows of the diagram \eqref{Eq:LCFT:Compati} are isomorphisms.
	Hence so is the middle.
\end{proof}

%%%%%%%%%%%%%%%%%%%%%%%%%%%%%%%%%%%%%%%%%%%%%%%%%%%%%%%%%%%%%%%%%%%%%%%%%%%%%%%%%%%%%%%%%%%%%

\subsection{Auxiliary results} \label{Sect:Aux}

Propositions \ref{Prop:BC}, \ref{Prop:RamFil} and \ref{Prop:NormCok} below
are originally Lemmas 4.3, 4.4 and 4.5, respectively, of \cite[\S 4]{SY10}.

\begin{prp} \label{Prop:BC}
	Let $E / K$ be a finite extension with residue extension $k'' / k$.
	Then the isomorphisms of Theorem \ref{Th:Main} for $K$ and $E$
	satisfy the following commutative diagram:
		\[
			\begin{CD}
					\pi_{1}^{k}(\alg{E}_{k}^{\times})
				@> \pi_{1}^{k}(N_{E / K}) >>
					\pi_{1}^{k}(\alg{K}^{\times})
				@> \partial >>
					\pi_{0}^{k}(\Ker(N_{E / K}))
				@>>>
					0
				\\
				@VV \wr V
				@VV \wr V
				@VV \wr V
				@.
				\\
					G_{E}^{\ab}
				@>> \mathrm{Res} >
					G_{K}^{\ab}
				@>>>
					\Gal(E \cap K^{\ab} / K)
				@>>>
					0.
			\end{CD}
		\]
	Here we identified $\pi_{1}^{k''}(\alg{E}_{k''}^{\times})$ with
	$\pi_{1}^{k}(\alg{E}_{k}^{\times})$ by part \ref{Enum:HomotInv} of Proposition \ref{Prop:Weil}.
	The map $\partial$ is the boundary map of the homotopy long exact sequence
	coming from the short exact sequence
				$
						0
					\to
						\Ker(N_{E / K})
					\to
						\alg{E}_{k}^{\times}
					\overset{N_{E / K}}{\longrightarrow}
						\alg{K}^{\times}
					\to
						0
				$.
\end{prp}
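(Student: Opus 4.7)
The plan is to verify commutativity of the left and right squares separately, after reducing to a finite Galois extension $L/K$ containing $E$. Let $k''$ denote the residue field of $E$, $k'$ that of $L$, and set $G = \Gal(L/K)$, $H = \Gal(L/E) \subset G$. The construction in the proof of Theorem~\ref{Th:Main} applied to $L/E$ over $\pfpqc{k''}$ produces an exact sequence
\[
    0 \to H^{\ab} \to \alg{L}_{k''}^{\times}/I_H \alg{U}_{L,\, k'/k''} \to \alg{E}_{k''}^{\times} \to 0.
\]
Applying the exact functor $\Res_{k''/k}$ (Proposition~\ref{Prop:Weil}), together with the identifications $\Res_{k''/k}\alg{L}_{k''}^{\times} = \alg{L}_k^{\times}$ and $\Res_{k''/k}\alg{E}_{k''}^{\times} = \alg{E}_k^{\times}$ from Section~\ref{Sec:SheavesL}, we obtain an exact sequence
\[
    0 \to H^{\ab} \to \alg{L}_k^{\times}/I_H \Res_{k''/k}\alg{U}_{L,\, k'/k''} \to \alg{E}_k^{\times} \to 0
\]
in $\psh{k}$, which I will compare with the $L/K$ sequence from Theorem~\ref{Th:Main},
\[
    0 \to G^{\ab} \to \alg{L}_k^{\times}/I_G \alg{U}_{L,\, k'/k} \to \alg{K}^{\times} \to 0.
\]

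The key step is to build a morphism from the first sequence to the second, realizing $N_{E/K}$ on the right column and the inclusion-induced map $H^{\ab} \to G^{\ab}$ on the left. Since $I_H \subset I_G$ and $\Res_{k''/k}\alg{U}_{L,\, k'/k''} \subset \alg{U}_{L,\, k'/k}$ (because the $K$-valuation on $\alg{L}_k^{\times}$ factors through the $E$-valuation map $\alg{L}_k^{\times} \to \Z[\Gal(k''/k)]$), the denominators satisfy $I_H \Res_{k''/k}\alg{U}_{L,\, k'/k''} \subset I_G \alg{U}_{L,\, k'/k}$, yielding a canonical surjection between the middle terms. Picking coset representatives $G = \bigsqcup_i c_i H$ and using $N_G = (\sum_i c_i)\, N_H$, one checks that $N_{E/K} \circ N_H = N_G$ on $\alg{L}_k^{\times}$, so that this surjection induces $N_{E/K}$ on the right columns. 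On the left, the induced map identifies under $H^{\ab} \cong \hat{H}^{-2}(H, \Z)$ and $G^{\ab} \cong \hat{H}^{-2}(G, \Z)$ with corestriction in degree $-2$, which is precisely the inclusion-induced map on abelianizations.

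Taking homotopy long exact sequences and passing to the inverse limit over finite Galois $L/K$ containing $E$ yields commutativity of the left square, using $G_E^{\ab} = \varprojlim_L H^{\ab}$ and $G_K^{\ab} = \varprojlim_L G^{\ab}$ with transition maps matching the inclusion-induced maps. The right square is then formal: the top row is the homotopy long exact sequence attached to $0 \to \Ker(N_{E/K}) \to \alg{E}_k^{\times} \to \alg{K}^{\times} \to 0$, which terminates at $0$ because $\pi_0^k(N_{E/K}) \colon \pi_0^k(\alg{E}_k^{\times}) \to \pi_0^k(\alg{K}^{\times})$ is multiplication by $f_{E/K}$ on $\hat{\Z}$ (via the valuation splitting and Proposition~\ref{Prop:Weil}), hence injective; the bottom row is exact because $\Gal(E \cap K^{\ab}/K) = G_K^{\ab}/\mathrm{image}(\mathrm{Res})$. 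The five-lemma then provides the induced isomorphism on the rightmost column. The principal obstacle will be verifying that the middle map constructed above really induces corestriction on the $\hat{H}^{-1}$-kernels, which requires unwinding the Tate-cohomological identification from the proof of Theorem~\ref{Th:Main} through the Weil restriction.
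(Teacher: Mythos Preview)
Your overall strategy for the separable case---build a morphism of short exact sequences over the auxiliary Galois $L/K$ and identify the induced map on kernels with corestriction---is the same as the paper's. But there are two genuine gaps.

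First, the Weil restriction step is wrong as written. Applying $\Res_{k''/k}$ to the constant sheaf $H^{\ab}$ on $\pfpqc{k''}$ does \emph{not} return the constant sheaf $H^{\ab}$ on $\pfpqc{k}$ unless $k''=k$: you get the \'etale group $\Res_{k''/k}H^{\ab}$, whose $\algcl{k}$-points are $(H^{\ab})^{[k'':k]}$ with Galois permutation. So your displayed sequence over $k$ has the wrong kernel, and the subsequent comparison with the $G$-sequence (and the identification of the left column with corestriction) needs to be redone through this \'etale sheaf. The paper sidesteps this entirely: it builds the $H$-sequence \emph{directly over $k$} by viewing $\alg{L}_{k}^{\times}$ as a sheaf of $H$-modules on $\pfpqc{k}$ and invoking the second assertion of Proposition~\ref{Prop:Vanish}, namely $\hat{H}^{i}(H,\alg{L}_{k}^{\times})=0$. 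This yields
\[
0 \to H^{\ab} \to \alg{L}_{k}^{\times}/I_{H}\alg{U}_{L,\,k'/k} \to \alg{E}_{k}^{\times} \to 0
\]
with the honest constant $H^{\ab}$ and with the same $\alg{U}_{L,\,k'/k}$ appearing in the $G$-sequence, so the canonical quotient map between middle terms and the corestriction identification are immediate.

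Second, you only treat extensions $E/K$ that embed in a finite Galois $L/K$, i.e.\ separable ones. The proposition is stated for arbitrary finite $E/K$. The paper handles this by reducing to the two cases $E/K$ separable and $E/K$ purely inseparable; in the latter it sets $F=LE$ for a finite Galois $L/K$, uses $\Gal(F/E)=\Gal(L/K)$, and compares via $N_{F/L}$.

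A minor point: $\pi_{0}^{k}(N_{E/K})$ is in fact an isomorphism, not multiplication by $f_{E/K}$; the paper deduces this from diagram~\eqref{Eq:ValDiagram} and the connectedness of $\alg{U}_{K}$ and $\alg{U}_{E,k}$. Since you only use injectivity, this does not affect your conclusion about the exactness of the top row.
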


\begin{proof}
	First we show that $\partial$ is surjective.
	Consider the long exact sequence
		\[
				\cdots
			\to
				\pi_{1}^{k}(\alg{E}_{k}^{\times})
			\to
				\pi_{1}^{k}(\alg{K}^{\times})
			\overset{\partial}{\to}
				\pi_{0}^{k}(\Ker(N_{E / K}))
			\to
				\pi_{0}^{k}(\alg{E}_{k}^{\times})
			\to
				\pi_{0}^{k}(\alg{K}^{\times})
			\to
				0.
		\]
	It is enough to show that the last map
		$
				\pi_{0}^{k}(\alg{E}_{k}^{\times})
			\to
				\pi_{0}^{k}(\alg{K}^{\times})
		$
	is an isomorphism.
	This follows from the bottom half of the diagram \eqref{Eq:ValDiagram}
	by noticing that $\alg{U}_{K}$ and $\alg{U}_{E, k}$ are connected.
	
	Next we show that the left square of the diagram is commutative.
	We may assume $E$ is either separable or purely inseparable.
	
	First we treat the case $E$ is separable.
	Let $L$ be a finite Galois extension of $K$ containing $E$
	and let $k'$ be the residue field of $L$.
	We set $G = \Gal(L / K)$ and $H = \Gal(L / E)$.
	It is enough to show that the diagram
		\[
			\begin{CD}
					\pi_{1}^{k}(\alg{E}_{k}^{\times})
				@> \pi_{1}^{k}(N_{E / K}) >>
					\pi_{1}^{k}(\alg{K}^{\times})
				\\
				@VVV
				@VVV
				\\
					H^{\ab}
				@> \mathrm{can.} >>
					G^{\ab}
			\end{CD}
		\]
	is commutative.
	For this, it suffices to construct a diagram
		\[
			\begin{CD}
					0
				@>>>
					H^{\ab}
				@>>>
					\alg{L}_{k}^{\times} / I_{H} \alg{U}_{L, k' / k}
				@> N_{L / E} >>
					\alg{E}_{k}^{\times}
				@>>>
					0
				\\
				@.
				@VV \mathrm{can.} V 
				@VV \mathrm{can.} V
				@VV N_{E / K} V
				@.
				\\
					0
				@>>>
					G^{\ab}
				@>>>
					\alg{L}_{k}^{\times} / I_{G} \alg{U}_{L, k' / k}
				@> N_{L / K} >>
					\alg{K}^{\times}
				@>>>
					0
			\end{CD}
		\]
	and prove the commutativity of the squares and the exactness of the rows.
	We construct the top row.
	We regard the exact sequence
	$0 \to \alg{U}_{L, k' / k} \to \alg{L}_{k}^{\times} \to \Z \to 0$
	of \eqref{Eq:ValExtSh} as an exact sequence of sheaves of $H$-modules.
	By Proposition \ref{Prop:Vanish},
	we have $\hat{H}^{i}(H, \alg{L}_{k}^{\times}) = 0$.
	Therefore we have $\hat{H}^{i - 1}(H, \Z) \isomto \hat{H}^{i}(H, \alg{U}_{L,\, k' / k})$.
	The $H$-invariant part of the morphism
	$\alg{L}_{k}^{\times} \onto \Z$ is $\alg{E}_{k}^{\times} \onto \Z$,
	which implies that the $H$-invariant part of
	$\alg{U}_{L,\, k' / k}$ is $\alg{U}_{E,\, k'' / k}$.
	The rest of the construction of the top row is the same as that of the bottom row,
	which we did in the previous section.
	The commutativity of the left square follows from
	the naturality of corestriction maps
		\[
			\begin{CD}
					\hat{H}^{-2}(H, \Z)
				@> \sim >>
					\hat{H}^{-1}(H, \alg{U}_{L,\, k' / k})
				\\
				@V \Cores VV
				@VV \Cores V
				\\
					\hat{H}^{-2}(G, \Z)
				@> \sim >>
					\hat{H}^{-1}(G, \alg{U}_{L,\, k' / k}).
			\end{CD}
		\]
	
	Next we treat the case $E / K$ is purely inseparable.
	Let $L / K$ be a finite Galois extension with residue extension $k' / k$.
	We set $F = L E$.
	Then $\Gal(L / K) = \Gal(F / E)$.
	We have a commutative diagram
		\[
			\begin{CD}
					0
				@>>>
					\alg{U}_{F, k' / k}
				@>>>
					\alg{F}_{k}^{\times}
				@>>>
					\Z
				@>>>
					0
				\\
				@.
				@VV N_{F / L} V 
				@VV N_{F / L} V
				@|
				@.
				\\
					0
				@>>>
					\alg{U}_{L, k' / k}
				@>>>
					\alg{L}_{k}^{\times}
				@>>>
					\Z
				@>>>
					0.
			\end{CD}
		\]
	The rest of the proof is easy and similar to the separable case.
\end{proof}

If $k$ is quasi-finite with given generator $F$ of its absolute Galois group,
the above proposition implies that
the homomorphism $K^{\times} \to \pi_{1}^{k}(\alg{K}^{\times})$ of \eqref{Eq:HomOnRatF}
followed by the isomorphism $\pi_{1}^{k}(\alg{K}^{\times}) \isomto \Gal(K^{\ab} / K)$
gives a homomorphism $K^{\times} / N_{E / K} E^{\times} \to \Gal(E \cap K^{\ab} / K)$.
This and the diagram \eqref{Eq:LCFT:Compati} together imply that
our $K^{\times} \to \Gal(K^{\ab} / K)$ has to be the same as
the canonical homomorphism of the usual local class field theory times $-1$,
which sends a prime element to an automorphism that acts on $k^{\ab} = \algcl{k}$ by $F^{-1}$.

\begin{prp} \label{Prop:RamFil}
	Let $L / K$ be a finite totally ramified abelian extension
	with Galois group $G$.
	Let $\psi = \psi_{L / K}$ be the Herbrand function,
	$m \ge 1$ an integer, $N \colon \alg{U}_{L} \to \alg{U}_{K}$ the norm map
	and
		$
				\bar{N}
			\colon
				\alg{U}_{L} / \alg{U}_{L}^{\psi(m - 1) + 1}
			\to
				\alg{U}_{K} / \alg{U}_{K}^{m}
		$
	its quotient.
	Then we have $\pi_{0}^{k}(\Ker(\bar{N})) \cong G / G^{m}$,
	where $G^{m}$ is the $m$-th ramification group in the upper numbering.
\end{prp}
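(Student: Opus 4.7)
The plan is to reduce the computation to Serre's exact sequence, quotient by the filtration $\alg{U}_{L}^{\psi(m-1)+1}$, and then invoke classical ramification theory to identify a certain subgroup of $G$ with $G^{m}$.

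First, I combine the classical Herbrand identity $N_{L/K}(U_{L}^{\psi(m-1)+1}) = U_{K}^{m} \cap N_{L/K}(U_{L})$ with the sheaf-theoretic surjectivity of $N$ coming from Serre's theorem (the exactness of \eqref{Eq:SerEx}) to conclude that $N \colon \alg{U}_{L}^{\psi(m-1)+1} \to \alg{U}_{K}^{m}$ is a sheaf surjection (in particular, so is $\bar{N}$). Setting $V = \alg{U}_{L}/I_{G}\alg{U}_{L}$ and writing $\tilde{U}$ for the image of $\alg{U}_{L}^{\psi(m-1)+1}$ in $V$, Serre's exact sequence $0 \to G \to V \to \alg{U}_{K} \to 0$ (in which $\sigma \mapsto \sigma(\pi_{L})/\pi_{L}$) then descends to an exact sequence
\[
	0 \to G/(G \cap \tilde{U}) \to V/\tilde{U} \to \alg{U}_{K}/\alg{U}_{K}^{m} \to 0.
\]

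Second, since $\bar{N}$ factors as $\alg{U}_{L}/\alg{U}_{L}^{\psi(m-1)+1} \twoheadrightarrow V/\tilde{U} \to \alg{U}_{K}/\alg{U}_{K}^{m}$, and the first surjection has kernel $I_{G}\alg{U}_{L}/(I_{G}\alg{U}_{L} \cap \alg{U}_{L}^{\psi(m-1)+1})$, a short diagram chase yields the exact sequence
\[
	0 \to I_{G}\alg{U}_{L}/(I_{G}\alg{U}_{L} \cap \alg{U}_{L}^{\psi(m-1)+1}) \to \Ker(\bar{N}) \to G/(G \cap \tilde{U}) \to 0.
\]
The leftmost term is a quotient of $I_{G}\alg{U}_{L}$, which is the image of a finite product of copies of $\alg{U}_{L}$ under the map $(u_{\sigma})_{\sigma} \mapsto \prod_{\sigma} \sigma(u_{\sigma})/u_{\sigma}$ and hence connected. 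Right-exactness of $\pi_{0}^{k}$ then gives $\pi_{0}^{k}(\Ker(\bar{N})) \cong G/(G \cap \tilde{U})$.

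Finally, I must identify $G \cap \tilde{U}$ with the upper-numbering ramification group $G^{m}$. Unwinding definitions, this amounts to showing that $\sigma(\pi_{L})/\pi_{L} \in \alg{U}_{L}^{\psi(m-1)+1} \cdot I_{G}\alg{U}_{L}$ if and only if $\sigma \in G^{m}$. The inclusion $G^{m} \subseteq G \cap \tilde{U}$ is immediate from the identity $G^{m} = G_{\psi(m)}$, the classical equivalence $\tau \in G_{n} \Longleftrightarrow \tau(\pi_{L})/\pi_{L} \in \alg{U}_{L}^{n}$, and the slope inequality $\psi(m) \ge \psi(m-1)+1$ for totally ramified extensions. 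The reverse inclusion is the principal obstacle and encodes the compatibility of the reciprocity map with the ramification filtration; it reduces to the Hasse-Arf theorem for abelian extensions together with the classical statement $\theta_{L/K}(U_{K}^{m}) = G^{m}$, as developed in \cite[Ch.\ XV, \S 2]{Ser79}. Every other step of the argument is either a formal diagram chase or an instance of right-exactness of $\pi_{0}^{k}$.
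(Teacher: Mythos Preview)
Your strategy---factoring $\bar{N}$ through $V=\alg{U}_L/I_G\alg{U}_L$, showing $\pi_0^k(\Ker\bar{N})\cong G/(G\cap\tilde{U})$, then identifying $G\cap\tilde{U}$ with $G^m$---is genuinely different from the paper's. The paper never passes to $V$; it compares $\Ker\bar{N}$ directly with $\Ker\bar{\bar{N}}$ (where $\bar{\bar{N}}\colon\alg{U}_L^{\psi(m-1)}/\alg{U}_L^{\psi(m-1)+1}\to\alg{U}_K^{m-1}/\alg{U}_K^m$) through the common subsheaf $\Ker(N)\cap\alg{U}_L^{\psi(m-1)+1}$, applies $\pi_0^k$, and reads off the answer from three proalgebraic identifications in \cite[\S3.4--3.5]{Ser61}.

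Your final step, however, has a real gap. The reverse inclusion $G\cap\tilde{U}\subseteq G^m$ cannot be extracted from \cite[Ch.~XV, \S2]{Ser79}: that chapter treats the \emph{classical} reciprocity map $\theta_{L/K}\colon K^\times\to G$ for quasi-finite residue field, whereas the membership $\sigma(\pi_L)/\pi_L\in I_G\alg{U}_L\cdot\alg{U}_L^{\psi(m-1)+1}$ is a statement about $\algcl{k}$-points of proalgebraic groups, a setting in which the classical $\theta$ is trivial and says nothing. What you actually need is Serre's proalgebraic result that the image of $\pi_1^k(\alg{U}_K^m)\to\pi_1^k(\alg{U}_K)\to G$ is exactly $G^m$ (equivalently $\pi_0^k(\Ker N\cap\alg{U}_L^{\psi(m)})\cong G^m$), which is \cite[\S3.5, Prop.~8]{Ser61} and does use Hasse--Arf. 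Even granting this, you must still link it to $G\cap\tilde{U}$: restricting Serre's sequence to $\tilde{U}\subset V$ gives $0\to G\cap\tilde{U}\to\tilde{U}\to\alg{U}_K^m\to 0$, whose boundary $\pi_1^k(\alg{U}_K^m)\twoheadrightarrow G\cap\tilde{U}$ (surjective since $\tilde{U}$ is connected) is compatible with the full reciprocity map, so $G\cap\tilde{U}$ is precisely that image. With this addition your argument works, but it then rests on the same \cite{Ser61} inputs as the paper's shorter proof. A minor point: your ``Herbrand identity'' $N(U_L^{\psi(m-1)+1})=U_K^m\cap N(U_L)$ is not the standard form (which is stated for $U_L^{\psi(n)}$ with $n$ an integer); the correct source for the sheaf surjection $N\colon\alg{U}_L^{\psi(m-1)+1}\twoheadrightarrow\alg{U}_K^m$ is again \cite[\S3.4, Prop.~6(a)]{Ser61}.
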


\begin{proof}
	By \cite[\S 3.4, Prop.\ 6 (a)]{Ser61},
	we have $N(\alg{U}_{L}^{\psi(m - 1) + 1}) = \alg{U}_{K}^{m}$.
	This and a diagram chase show that the commutative diagram
		\[
			\begin{CD}
					0
				@>>>
					\Ker(N) \cap \alg{U}_{L}^{\psi(m - 1) + 1}
				@>>>
					\Ker(N) \cap \alg{U}_{L}^{\psi(m - 1)}
				@>>>
					\Ker(\bar{\bar{N}})
				@>>>
					0
				\\
				@. @| @VVV @VVV @.
				\\
					0
				@>>>
					\Ker(N) \cap \alg{U}_{L}^{\psi(m - 1) + 1}
				@>>>
					\Ker(N)
				@>>>
					\Ker(\bar{N})
				@>>>
					0
			\end{CD}
		\]
	has exact rows,
	where
		$
				\bar{\bar{N}}
			\colon
				\alg{U}_{L}^{\psi(m - 1)} / \alg{U}_{L}^{\psi(m - 1) + 1}
			\to
				\alg{U}_{K}^{m - 1} / \alg{U}_{K}^{m}
		$.
	Apply $\pi_{0}^{k}$ to this diagram.
	We use \cite[\S 3.5, Prop.\ 8, (ii)]{Ser61},
	Proposition \ref{Prop:BC} (or \cite[\S 2.3, Cor.\ to Prop.\ 3]{Ser61})
	and \cite[\S 3.4, Prop.\ 6, (b)]{Ser61}
	for the top middle term, bottom middle term and the top right term respectively.
	Then we get a commutative diagram with exact rows
		\[
			\begin{CD}
					\pi_{0}^{k}(\Ker(N) \cap \alg{U}_{L}^{\psi(m - 1) + 1})
				@>>>
					G^{m - 1}
				@>>>
					G^{m - 1} / G^{m}
				@>>>
					0
				\\
				@| @VVV @VVV @.
				\\
					\pi_{0}^{k}(\Ker(N) \cap \alg{U}_{L}^{\psi(m - 1) + 1})
				@>>>
					G
				@>>>
					\pi_{0}^{k}(\Ker(\bar{N}))
				@>>>
					0.
			\end{CD}
		\]
	Thus we have $\pi_{0}^{k}(\Ker(\bar{N})) \cong G / G^{m}$.
\end{proof}

\begin{prp} \label{Prop:NormCok}
	Let $L / K$, $G$, $\psi$ and $N$ have the same meaning as in the previous proposition.
	The homomorphism $K^{\times} \to \Hom(\Gal(k^{\ab} / k), \pi_{1}^{k}(\alg{K}^{\times}))$
	of \eqref{Eq:HomOnRat} with the isomorphism
	$\pi_{1}^{k}(\alg{K}^{\times}) \isomto \Gal(K^{\ab} / K)$
	induces isomorphisms
		$
				K^{\times} / N L^{\times}
			\cong
				\Hom(\Gal(k^{\ab} / k), G)
		$
	and
		$
				U_{K}^{m - 1} / U_{K}^{m} N U_{L}^{\psi(m - 1)}
			\cong
				\Hom(\Gal(k^{\ab} / k), G^{m - 1} / G^{m})
		$
	for any integer $m \ge 1$.
	If $K' / K$ is a finite unramified extension with residue extension $k' / k$
	and $L' = K' L$,
	then these isomorphisms satisfy commutative diagrams
		\begin{gather*}
				\begin{CD}
						K^{\times} / N L^{\times}
					@=
						\Hom(\Gal(k^{\ab} / k), G)
					\\
					@VVV
					@VVV
					\\
						K'^{\times} / N L'^{\times}
					@=
						\Hom(\Gal(k'^{\ab} / k'), G),
				\end{CD}
			\\
				\begin{CD}
						U_{K}^{m - 1} / U_{K}^{m} N U_{L}^{\psi(m - 1)}
					@=
						\Hom(\Gal(k^{\ab} / k), G^{m - 1} / G^{m})
					\\
					@VVV
					@VVV
					\\
						U_{K'}^{m - 1} / U_{K'}^{m} N U_{L'}^{\psi(m - 1)}
					@=
						\Hom(\Gal(k'^{\ab} / k'), G^{m - 1} / G^{m}).
				\end{CD}
		\end{gather*}
	Here the vertical maps are induced by the inclusion $K^{\times} \into K'^{\times}$
	and the natural map $\Gal(k'^{\ab} / k') \to \Gal(k^{\ab} / k)$.
\end{prp}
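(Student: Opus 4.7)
The plan is to deduce both isomorphisms from Theorem \ref{Th:Main} and Propositions \ref{Prop:BC} and \ref{Prop:RamFil} by applying the left-exact functor $\Hom_{k}(\Z, -) = (-)(k)$ to the Serre-type short exact sequences of sheaves on $\pfpqc{k}$ that already appeared in the proofs of those statements, and then reading off the associated long exact $\Ext_{k}^{*}(\Z, -)$ sequences. The global identification used throughout is that, by Proposition \ref{Prop:ThickEt}, for any finite abelian group $N$ with trivial Galois action we have $\Ext_{k}^{1}(\Z, N) = H^{1}(\Gal(\algcl{k}/k), N) = \Hom(\Gal(k^{\ab}/k), N)$.

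For the first isomorphism, I would start from the sequence
\[
0 \to G \to \alg{U}_{L} / I_{G}\alg{U}_{L} \overset{N}{\to} \alg{U}_{K} \to 0
\]
extracted from the proof of Theorem \ref{Th:Main} (available because $L/K$ is totally ramified abelian, so $k' = k$ and $G^{\ab} = G$). Applying $\Hom_{k}(\Z, -)$ yields an exact sequence
\[
(\alg{U}_{L}/I_{G}\alg{U}_{L})(k) \to U_{K} \overset{\partial}{\to} \Hom(\Gal(k^{\ab}/k), G) \to \Ext_{k}^{1}(\Z, \alg{U}_{L}/I_{G}\alg{U}_{L}).
\]
I would then check three things: (a) $\Im((\alg{U}_{L}/I_{G}\alg{U}_{L})(k) \to U_{K}) = N U_{L}$, by noting that the composite $U_{L} \to (\alg{U}_{L}/I_{G}\alg{U}_{L})(k) \to U_{K}$ is $N$ and handling the possible gap $(\alg{U}_{L}/I_{G}\alg{U}_{L})(k)/\text{image of }U_{L}$ via vanishing of $\Ext_{k}^{1}(\Z, I_{G}\alg{U}_{L})$; (b) $\partial$ coincides with the composite of \eqref{Eq:HomOnRat}, the isomorphism of Theorem \ref{Th:Main}, and the projection $\Gal(K^{\ab}/K) \onto G$, by matching the explicit coboundary description of \eqref{Eq:HomOnRat} against the construction of $\pi_{1}^{k}(\alg{K}^{\times}) \onto G$ (for which Proposition \ref{Prop:BC}, applied to $E = L$, identifies the boundary with the natural surjection onto $\Gal(L \cap K^{\ab}/K) = G$); (c) $\partial$ is surjective, equivalently $\Hom(\Gal(k^{\ab}/k), G) \to \Ext_{k}^{1}(\Z, \alg{U}_{L}/I_{G}\alg{U}_{L})$ is zero, which reduces to Galois Hilbert 90 for $\hat{L}^{ur}/L$ after expressing the relevant $\Ext_{k}^{1}$ as $H^{1}_{\mathrm{fpqc}}(k, \alg{U}_{L}/I_{G}\alg{U}_{L})$. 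The totally-ramified identity $K^{\times}/NL^{\times} = U_{K}/NU_{L}$, which follows from $v_{K}(N\pi_{L}) = 1$, then upgrades the isomorphism $U_{K}/NU_{L} \isomto \Hom(\Gal(k^{\ab}/k), G)$ to the one stated. The second isomorphism is obtained by the exact same procedure, one level deeper in the ramification filtration: Proposition \ref{Prop:RamFil} and the argument of its proof provide a short exact sequence of sheaves
\[
0 \to G^{m-1}/G^{m} \to \alg{U}_{L}^{\psi(m-1)}/\bigl(\alg{U}_{L}^{\psi(m-1)+1} + I_{G}\alg{U}_{L}^{\psi(m-1)}\bigr) \to \alg{U}_{K}^{m-1}/\alg{U}_{K}^{m} \to 0,
\]
to which the same long-exact-sequence argument applies verbatim.

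For the functoriality under the unramified base change $K' = K k'/K$, I would invoke Proposition \ref{Prop:BC} applied to the unramified extension $E = K'$: combined with the isomorphism $\Res_{k'/k} \alg{K}'^{\times}_{k'} \cong \alg{K}'^{\times}_{k}$ (from the setup in Section \ref{Sec:SheavesL} and Proposition \ref{Prop:GrnRes}) and Proposition \ref{Prop:Weil}\ref{Enum:HomotInv}, this yields a morphism between the Serre-type sequences for $(L/K, k)$ and $(L'/K', k')$ inducing the two commutative diagrams claimed. The main obstacle in the plan will be step (c) above --- surjectivity of $\partial$ --- which requires vanishing of first $\Ext_{k}^{1}(\Z, -)$ groups for non-algebraic proalgebraic sheaves like $\alg{U}_{L}/I_{G}\alg{U}_{L}$; the cleanest route will be to use the $\alg{U}_{L}^{n}$-filtration together with the Greenberg-functor description from Section \ref{Sec:Green} to reduce to finite-length Witt-vector computations and ultimately to Galois Hilbert 90.
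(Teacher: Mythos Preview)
Your plan matches the paper's proof closely: both use the Serre sequence $0 \to G \to \alg{U}_{L}/I_{G}\alg{U}_{L} \to \alg{U}_{K} \to 0$, take the Galois-cohomology coboundary (equivalently your $\Ext_{k}^{\ast}(\Z,-)$ long exact sequence), and then kill the next term by proving $H^{1}(k,\,\cdot\,)=0$ for the middle sheaf. Two points where the paper is sharper than your outline:

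\textbf{The second short exact sequence.} You propose
\[
0 \to G^{m-1}/G^{m} \to \alg{U}_{L}^{\psi(m-1)}\big/\bigl(\alg{U}_{L}^{\psi(m-1)+1} + I_{G}\alg{U}_{L}^{\psi(m-1)}\bigr) \to \alg{U}_{K}^{m-1}/\alg{U}_{K}^{m} \to 0,
\]
but this is not what Proposition~\ref{Prop:RamFil} (or its proof) supplies, and the extra $I_{G}$ term is neither needed nor obviously harmless: the image of $I_{G}\alg{U}_{L}^{\psi(m-1)}$ in the graded piece lies inside $\Ker(\bar{\bar{N}}) = G^{m-1}/G^{m}$, so modding it out could shrink the kernel. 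The paper simply quotes Serre's \cite[\S 3.4, Prop.~6(b)]{Ser61} for the cleaner sequence
\[
0 \to G^{m-1}/G^{m} \to \alg{U}_{L}^{\psi(m-1)}/\alg{U}_{L}^{\psi(m-1)+1} \to \alg{U}_{K}^{m-1}/\alg{U}_{K}^{m} \to 0,
\]
whose middle term is literally $\Gm^{(\infty)}$ (if $m=1$) or $\Ga^{(\infty)}$ (if $m>1$), so the required $H^{1}$-vanishing is immediate.

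\textbf{Step (c).} Rather than a general reduction ``to finite-length Witt-vector computations,'' the paper handles $H^{1}(k,\alg{U}_{L}/I_{G}\alg{U}_{L}) = 0$ in one line: the Teichm\"uller section splits off $\Gm^{(\infty)}$, and the remaining factor $\alg{U}_{L}^{1}/I_{G}\alg{U}_{L}^{1}$ is connected affine unipotent, hence admits a filtration with all subquotients $\cong \Ga^{(\infty)}$; then $H^{1}(k,\Gm^{(\infty)}) = H^{1}(k,\Ga^{(\infty)}) = 0$ finishes it. The same remark disposes of your step~(a), since $I_{G}\alg{U}_{L} \subset \alg{U}_{L}^{1}$ is again connected unipotent and thus $U_{L} \to (\alg{U}_{L}/I_{G}\alg{U}_{L})(k)$ is surjective. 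For the functoriality under $K'/K$ the paper just says ``naturality of corestriction maps''; your route via Proposition~\ref{Prop:BC} and Weil restriction is a valid unpacking of the same thing.
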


\begin{proof}
	First we have $K^{\times} / N L^{\times} = U_{K} / N U_{L}$
	since $L / K$ is totally ramified.
	We have short exact sequences
		\begin{gather*}
					0
				\to
					G
				\to
					\alg{U}_{L} / I_{G} \alg{U}_{L}
				\to
					\alg{U}_{K}
				\to
					0
			\quad \text{and}
			\\
					0
				\to
					G^{m - 1} / G^{m}
				\to
					\alg{U}_{L}^{\psi(m - 1)} / \alg{U}_{L}^{\psi(m - 1) + 1}
				\to
					\alg{U}_{K}^{m - 1} / \alg{U}_{K}^{m}
				\to
					0
		\end{gather*}
	of proalgebraic groups over $k$
	by \eqref{Eq:SerEx} and \cite[\S 3.4, Prop.\ 6 (b)]{Ser61} respectively.
	The coboundary maps of Galois cohomology of $k$%
		\footnote{Again do not confuse this with Tate cohomology sheaves.}
	for these sequences
	give the maps
		$
				U_{K}^{\times} / N U_{L}^{\times}
			\to
				\Hom(G_{k}, G)
		$
	and
		$
				U_{K}^{m - 1} / U_{K}^{m} N U_{L}^{\psi(m - 1)}
			\to
				\Hom(G_{k}, G^{m - 1} / G^{m})
		$
	in the statement by construction.
	We have 
		$
				\alg{U}_{L} / I_{G} \alg{U}_{L}
			=
				\Gm^{(\infty)} \times \alg{U}_{L}^{1} / I_{G} \alg{U}_{L}^{1}
		$.
	The group $\alg{U}_{L}^{1} / I_{G} \alg{U}_{L}^{1}$ is a connected affine unipotent proalgebraic group,
	so it has a filtration with subquotients all isomorphic to $\Ga^{(\infty)}$.
	We have $H^{1}(k, \Gm^{(\infty)}) = H^{1}(k, \Ga^{(\infty)}) = 0$,
	so $H^{1}(k, \alg{U}_{L} / I_{G} \alg{U}_{L}) = 0$.
	Also $\alg{U}_{L}^{\psi(m - 1)} / \alg{U}_{L}^{\psi(m - 1) + 1}$
	is isomorphic to $\Gm^{(\infty)}$ if $m = 1$ and to $\Ga^{(\infty)}$ if $m > 1$.
	Hence we have $H^{1}(k, \alg{U}_{L}^{\psi(m - 1)} / \alg{U}_{L}^{\psi(m - 1) + 1}) = 0$.
	Therefore we get the required isomorphisms.
	The commutativity of the two diagrams are the naturality of corestriction maps.
\end{proof}

This proposition shares large part with Fesenko's result in his paper \cite{Fes93}.

%%%%%%%%%%%%%%%%%%%%%%%%%%%%%%%%%%%%%%%%%%%%%%%%%%%%%%%%%%%%%%%%%%%%%%%%%%%%%%%%%%%%%%%%%%%%%


\begin{thebibliography}{SGA4-1}

\bibitem[DG70]{DG70}
Michel Demazure and Pierre Gabriel.
\newblock {\em Groupes alg\'ebriques. {T}ome {I}: {G}\'eom\'etrie alg\'ebrique,
  g\'en\'eralit\'es, groupes commutatifs}.
\newblock Masson \& Cie, \'Editeur, Paris, 1970.
\newblock (Appendix: M. Hazewinkel, Corps de classes local).

\bibitem[Fes93]{Fes93}
I.~B. Fesenko.
\newblock Local class field theory: the perfect residue field case.
\newblock {\em Izv. Ross. Akad. Nauk Ser. Mat.}, 57(4):72--91, 1993.

\bibitem[Gre65]{Gre65}
Marvin~J. Greenberg.
\newblock Perfect closures of rings and schemes.
\newblock {\em Proc. Amer. Math. Soc.}, 16:313--317, 1965.

\bibitem[Kre10]{Kre10}
Martin Kreidl.
\newblock On $p$-adic loop groups and {G}rassmannians.
\newblock arXiv:1010.0349v1, 2010.

\bibitem[Mil80]{Mil80}
James~S. Milne.
\newblock {\em \'{E}tale cohomology}, volume~33 of {\em Princeton Mathematical
  Series}.
\newblock Princeton University Press, Princeton, N.J., 1980.

\bibitem[Mil06]{Mil06}
J.~S. Milne.
\newblock {\em Arithmetic duality theorems}.
\newblock BookSurge, LLC, Charleston, SC, second edition, 2006.

\bibitem[Ser60]{Ser60}
Jean-Pierre Serre.
\newblock Groupes proalg\'ebriques.
\newblock {\em Inst. Hautes \'Etudes Sci. Publ. Math.}, 7, 1960.

\bibitem[Ser61]{Ser61}
Jean-Pierre Serre.
\newblock Sur les corps locaux \`a corps r\'esiduel alg\'ebriquement clos.
\newblock {\em Bull. Soc. Math. France}, 89:105--154, 1961.

\bibitem[Ser79]{Ser79}
Jean-Pierre Serre.
\newblock {\em Local fields}, volume~67 of {\em Graduate Texts in Mathematics}.
\newblock Springer-Verlag, New York, 1979.
\newblock Translated from the French by Marvin Jay Greenberg.

\bibitem[Ser02]{Ser02}
Jean-Pierre Serre.
\newblock {\em Galois cohomology}.
\newblock Springer Monographs in Mathematics. Springer-Verlag, Berlin,
  {E}nglish edition, 2002.
\newblock Translated from the French by Patrick Ion and revised by the author.

\bibitem[SGA3-1]{SGA3-1}
Michel Demazure and Alexandre Grothendieck, editors.
\newblock {\em S\'eminaire de G\'eom\'etrie Alg\'ebrique du Bois Marie -
  1962-64 - Sch\'emas en groupes - (SGA 3) - Tome I}, volume 151 of {\em
  Lecture Notes in Math.}
\newblock Springer, Berlin, 1970.

\bibitem[SGA4-1]{SGA4-1}
Michael Artin, Alexandre Grothendieck, and Jean-Louis Verdier, editors.
\newblock {\em S\'eminaire de G\'eom\'etrie Alg\'ebrique du Bois Marie -
  1963-64 - Th\'eorie des topos et cohomologie \'etale des sch\'emas - (SGA 4)
  - Tome I}, volume 269 of {\em Lecture Notes in Math.}
\newblock Springer, Berlin, 1972.

\bibitem[Suz09]{Suz09}
Takashi Suzuki.
\newblock Some remarks on the local class field theory of {S}erre and
  {H}azewinkel.
\newblock preprint, arXiv:1012.2935v2, 2009.

\bibitem[SY10]{SY10}
Takashi Suzuki and Manabu Yoshida.
\newblock Fontaine's property ({P}$_{m}$) at the maximal ramification break.
\newblock preprint, arXiv:1012.2935v1, 2010.

\end{thebibliography}
\end{document}